\documentclass[11pt,reqno,a4paper]{amsart}
\usepackage{amsmath,amssymb,amsthm,amsaddr}
\usepackage{enumitem}
\usepackage[margin=2.5cm,top=3cm,bottom=3cm,footskip=1cm,headsep=1cm]{geometry}
\usepackage[utf8]{inputenc}
\usepackage{graphicx}
\usepackage{tikz,color}
\usepackage{float}
\usetikzlibrary{arrows}

\author{\hspace*{0em}H.~Egger$^*$, T.~Kugler$^*$, B.~Liljegren-Sailer$^\dag$, \hspace*{9em} N.~Marheineke$^\dag$, and V.~Mehrmann$^+$\hspace*{9em}}
\address{$^*$Department of Mathematics, TU Darmstadt\\$^\dag$Department of Mathematics, University Erlangen\\$^+$Inst. f. Mathematik, MA 4-5, TU Berlin, D-10623 Berlin}

\title[Model reduction for wave propagation in transport networks]{On structure-preserving model reduction\\ for damped wave propagation in transport networks}

\newtheorem{lemma}{Lemma}[section]
\newtheorem{problem}[lemma]{Problem}
\newtheorem{theorem}[lemma]{Theorem}

\theoremstyle{definition}
\newtheorem{remark}[lemma]{Remark}
\newtheorem*{example*}{Example}
\newtheorem{algorithm}[lemma]{Algorithm}

\def\div{\mathrm{div}}

\def\dt{\partial_t}
\def\dx{\partial_x}

\def\RR{\mathbb{R}}

\def\E{\mathcal{E}}

\def\G{\mathcal{G}}

\def\N{\mathcal{N}}
\def\R{\mathcal{R}}
\def\V{\mathcal{V}}
\def\Vi{{\V_{0}}}
\def\Vb{{\V_{\partial}}}
\def\CC{\mathbb{C}}
\def\VV{\mathbb{V}}
\def\WW{\mathbb{W}}
\def\ZZ{\mathbb{Z}}

\numberwithin{equation}{section}
\numberwithin{table}{section}
\numberwithin{figure}{section}

\begin{document}

\begin{abstract} 
We consider the discretization and subsequent model reduction of a system of partial differential-algebraic equations describing the propagation of pressure waves in a pipeline network. 
Important properties like conservation of mass, dissipation of energy, 
passivity, existence of steady states, and exponential stability 
can be preserved by an appropriate semi-discretization in space via a 
mixed finite element method and also during the further dimension reduction by structure 
preserving Galerkin projection which is the main focus of this paper.
Krylov subspace methods are employed for the construciton of the reduced models
and we discuss modifications needed to satisfy certain algebraic 
compatibility conditions; these are required to ensure the well-posedness of the reduced models
and the preservation of the key properties.
Our analysis is based on the underlying infinite dimensional 
problem and its Galerkin approximations. The proposed algorithms 
therefore have a direct interpretation in function spaces;
in principle, they are even applicable directly to the original system of partial differential-algebraic equations while the intermediate discretization by finite elements is only required for the actual computations. 
The performance of the proposed methods is illustrated with numerical tests
and the necessity for the compatibility conditions is demonstrated by examples.
\end{abstract}

\maketitle

\begin{quote}
\noindent 
{\small {\bf Keywords:} 
partial differential-algebraic equations,
port-Hamiltonian systems,
Galerkin projection,
structure-preserving model reduction,
passivity,
exponential stability 
}
\end{quote}

\begin{quote}
\noindent
{\small {\bf AMS-classification (2000):}
35L05, 35L50, 65L20, 65L80, 65F25, 65M60
}
\end{quote}

\section{Introduction} \label{sec:intro}

We study a system of partial differential-algebraic equations 
modeling the propagation of pressure waves in a pipeline network. The basic features of 
this problem are conservation of mass and dissipation of energy by friction 
which in turn yields passivity and exponential stability of the system 
and the convergence to unique steady states. 
All these properties can be preserved for an appropriate semi-discretization in 
space by mixed finite elements resulting in a finite dimensional differential-algebraic system with a port-Hamiltonian structure \cite{EggerKugler16b}. 
In this paper, we consider a further dimension reduction of these high dimensional models by 
structure-preserving Galerkin projection with the aim to obtain reduced models of 
smaller dimension which can be used for online simulation and control. These models 
should also yield a good approximation of the overall behavior and 
preserve the port-Hamiltonian structure and further relevant properties.

The model reduction of structured linear time-invariant systems has attracted 
significant interest in the literature, see e.g.  \cite{BaiMeerbergenSu05,Freund05,MehrmannWatkins00,Polyuga10,PolyugaVanDerSchaft10,SorensenAntoulas05} and the references given there.
Related results for second order systems have been obtained in \cite{BaiSu05a,BaiSu05b,ChahlaouiEtAl05,LohmannSalimbahrami06,MeyerSrinivasan96}, 
and the reduction of differential-algebraic equations has, for instance, been addressed in \cite{BaiFreund01,MehS05}. 
Let us refer to \cite{Antoulas05,BenMS05} for a general introduction to reduced order modeling and further references. 

It is well-known that the port-Hamiltonian structure and thus passivity of the underlying system are inherited automatically by reduced models obtained via structure-preserving Galerkin projection \cite{GugercinPolyugaRostyslavBeattieVanDerSchaft12,PolyugaVanDerSchaft11,SalimbahramiLohmannBunseGerstner08}. The preservation of further properties, like conservation of mass or uniform exponential stability, however, requires the bases of the reduced models to satisfy additional compatibility conditions which have to be guaranteed explicitly.

The reduction of infinite dimensional systems described by partial differential 
or partial differential-algebraic equations has been considered, e.g., in \cite{ChapelleGariahStainteMarie12,HinzeVolkwein05,KunischVolkwein01},
and in \cite{GruJHCTB14} the reduction of models arising in gas transport networks has been discussed.
For such problems, or discretizations thereof, 
the bases for the reduced models have to be generated by some iterative process.  
Krylov subspace methods \cite{Bai02,Freund00,Grimme97,PolyugaVanDerSchaft11} and proper orthogonal decomposition \cite{ChapelleGariahStainteMarie12,HinzeVolkwein05,KunischVolkwein01} are frequently employed for this purpose, and their analysis in a function space setting allows to obtain mesh independent results.
In this paper we consider a structure-preserving model reduction
for large scale differential-algebraic systems obtained by discretization 
of a partial differential-algebraic model. 
We utilize Krylov subspace methods for the basis construction 
together with a structure-preserving space splitting and discuss appropriate modifications 
in order to satisfy some compatibility conditions required for the proof of mass conservation, uniform exponential stability, and the existence of steady states. 
While our algorithms are formulated in an algebraic setting, they also have an 
interpretation in function spaces. This is used already for the formulation of our algorithms 
and allows a complete analysis of the reduced models. Our methods
therefore turn out to be almost independent of the intermediate finite element approximation used 
in computations and they are applicable, in principle, even directly to the underlying partial differential-algebraic system.

\medskip 

The outline of the paper is as follows: 
In the following section, we introduce the model problem under consideration 
and discuss the basic steps and arguments of our approach. The remainder of the 
manuscript is then split into three major parts: Part~I is concerned with an outline and a partial analysis of the model reduction approach and Part~II provides numerical illustration of these results. 
Part~III contains the full analysis of the reduced order models obtained with our approach
which requires us to consider the infinite dimensional problem and
its approximation by mixed finite elements. The corresponding results 
mostly follow from those in \cite{EggerKugler16b} and they are therefore presented 
in the appendix for completeness and convenience of the reader.

\section{Model problem and outline of the approach}

The purpose of this section is to introduce in detail the problem under consideration 
and to give a rough idea of our approach and of the mutual relations between the 
underlying infinite dimensional system, the large scale finite dimensional systems
arising after discretization in space, and the reduced models we are looking for.

\subsection{Model problem}

We consider the propagation of pressure waves in a one-dimensional network of pipes 
whose geometry shall be given as finite directed and connected graph $\G=(\V,\E)$ with vertices $v \in \V$ and edges $e \in \E$.
On every pipe $e$, the conservation of mass and the balance of momentum are described by 
\begin{align}
a^e \dt p^e + \dx q^e &= 0             && \text{on } e \in \E, \ t>0,  \label{eq:sys1} \\
b^e \dt q^e + \dx p^e + d^e q^e &= 0   && \text{on } e \in \E, \ t>0. \label{eq:sys2}
\end{align}
Here $p^e$, $q^e$ denote the pressure and mass flux which are functions of space and time,
the coefficients $a^e$, $b^e$ encode properties of the fluid and the pipe,
and $d^e$ models the damping due to friction at the pipe walls. 
The coefficients are assumed to be positive and, for ease of presentation, constant on every pipe $e$.
At every inner vertex $v \in \Vi$ of the graph, corresponding to a junction of several pipes $e \in \E(v)$, we require that
\begin{align}
\sum\nolimits_{e \in \E(v)}  n^e(v) q^e(v) &= 0 && \text{for all } v \in \Vi, \ t>0, \label{eq:sys4}\\
p^e(v) &= p^{e'}(v) && \text{for all } e,e' \in \E(v), \ v \in \Vi, \ t>0.  \label{eq:sys3}
\end{align}
Here $n^e(v)=\mp 1$, depending on whether the pipe $e$ starts or ends at the vertex $v$; see Figure~\ref{fig:graph}. Furthermore, $m^e(v)$, $p^e(v)$ denote the respective functions evaluated at the vertex $v$ but still depending on time.
\begin{figure}[ht!]
\begin{minipage}[c]{.7\textwidth}
\begin{center}
\begin{tikzpicture}[scale=3]
\node[circle,draw,inner sep=2pt] (v1) at (-1.87,0) {$v_1$};
\node[circle,draw,inner sep=2pt] (v2) at (-0.87,0) {$v_2$};
\node[circle,draw,inner sep=2pt] (v3) at (0,0.5) {$v_3$};
\node[circle,draw,inner sep=2pt] (v4) at (0,-0.5) {$v_4$};
\draw[->,thick,line width=1.5pt] (v1) -- node[above] {$e_1$} ++(v2);
\draw[->,thick,line width=1.5pt] (v2) -- node[above,sloped] {$e_2$} ++(v3);
\draw[->,thick,line width=1.5pt] (v2) -- node[above,sloped] {$e_3$} ++(v4);
\end{tikzpicture}
\end{center}
\end{minipage}
\caption{\label{fig:graph}Graph $\G=(\V,\E)$ with vertices $\V=\{v_1,v_2,v_3,v_4\}$ and edges $\E=\{e_1,e_2,e_3\}$
defined by $e_1=(v_1,v_2)$, $e_2=(v_2,v_3)$, and $e_3=(v_2,v_4)$. 
Consequently $\Vi=\{v_2\}$, $\Vb=\{v_1,v_3,v_4\}$, $\E(v_2)=\{e_1,e_2,e_3\}$,
and moreover $n^{e_1}(v_1)=n^{e_2}(v_2)=n^{e_3}(v_2)=-1$ and 
$n^{e_1}(v_2)=n^{e_2}(v_3)=n^{e_3}(v_4)=1$.} 
\end{figure}
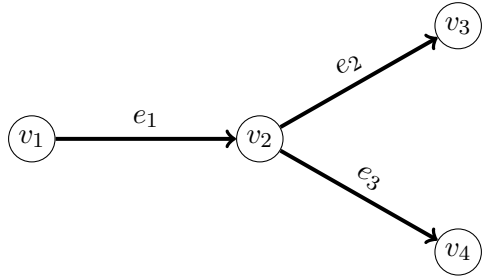
These coupling conditions model the conservation of mass and momentum at the junctions. 
At the boundary vertices $v \in \Vb = \V \setminus \Vi$, which correspond to the the ports of the network, we set
\begin{align}
p^e(v) &= u_v \qquad  \text{for } v \in \Vb, \ e \in \E(v), \ t>0 \label{eq:sys5}
\end{align}
with values $u_v$ denoting the given \emph{input} at the port $v \in \Vb$. 
As corresponding \emph{output} of the system, we consider the mass flux via the 
ports, given by
\begin{align} \label{eq:sys6}
y_v &= -n^e(v) q^e(v), \qquad v \in \Vb, \ e \in \E(v), \ t>0.
\end{align}
Other input and output configurations could be considered without difficulty as well. 
The specification of the model is completed by assuming knowledge of the initial conditions
\begin{align}
p(0)=p_0, \qquad  q(0)=q_0 \qquad \text{on } \E \label{eq:sys7}.
\end{align}

The system \eqref{eq:sys1}--\eqref{eq:sys7} models the propagation of pressure waves in a gas network on the acoustic time scale \cite{BrouwerGasserHerty11}. 
For sufficiently smooth initial data $p_0$, $q_0$, and appropriate compatible input functions $(u_v)_{v \in \Vb}$, existence of a unique classical solution can be established \cite{EggerKugler16b}.

\subsection{Basic properties}

The partial differential-algebraic system~\eqref{eq:sys1}--\eqref{eq:sys5} encodes several interesting properties 
which are directly related to the underlying physical principles:

\medskip

\medskip 
{\bf (P1) global conservation of mass}, which here can be expressed as 
 \begin{align*} 
  \frac{d}{dt} m &= \frac{d}{dt} \sum\nolimits_{e \in \E} \int_e a^e p^e dx \\
  &= -\sum\nolimits_{e \in \E} \int_e \dx q^e dx = -\sum\nolimits_{v \in \Vb} q^e(v) n^e(v) = \sum\nolimits_{v \in \Vb} y_v.  
 \end{align*}
The total mass of the gas contained in the system can thus only be altered by flow of gas into or out of the system via the ports of the network. 

\medskip 

{\bf (P2) a port-Hamiltonian structure}, leading to energy dissipation and passivity, i.e.,
 \begin{align*} 
  \frac{d}{dt} E &= \frac{d}{dt} \frac{1}{2} \sum\nolimits_{e \in \E} \int_e a^e |p^e|^2 + b^e |q^e|^2 dx  \\ & = -\sum\nolimits_{e \in \E} \int_e d^e |q^e|^2 dx + \sum\nolimits_{v \in \Vb} y_v u_v.
 \end{align*}
The total energy in the system only changes by dissipation through damping and injection or extraction via the system ports. 
Apart from these basic properties, the system further admits

\medskip 
{\bf(P3) exponential stability} and convergence to equilibrium for input $u \equiv 0$; more precisely, 
 \begin{align*} 
  E(t) \le C e^{-\gamma (t-s)} E(s), \qquad t \ge s,
 \end{align*}
with constants $C$ and $\gamma$ that are independent of the particular solution.
For constant input $u(t) \equiv const$, one can thus observe exponential convergence to 

\medskip 
{\bf (P4) unique steady states} for the corresponding stationary problem.

\medskip

\noindent
For a proof of these properties, let us refer to the appendix and to \cite{EggerKugler16b}. 
Systems of similar structure and with similar properties also model networks of electric transmission lines, vibrations of elastic multi-structures, or more general wave phenomena on multiply connected domains. Our arguments therefore may be useful in a wider context; see  \cite{BenMS05,GruJHCTB14,LagneseLeugeringSchmidt,Schilders08} for further applications.

\subsection{Full order model}

An appropriate discretization of the partial differential-algebraic system in space by mixed finite elements leads to a differential-algebraic system
\begin{alignat}{5}
M_1 \dot x_1 \ & & \            \ &+& \ G x_2 \ & & \            \ &= \ 0,    \label{eq:lti1}\\
M_2 \dot x_2 \ &-& \ G^\top x_1 \ &+& \ D x_2 \ &-& \ N^\top x_3 \ &= \ B_2 u,  \label{eq:lti2}\\
             \ & & \            \ & & \ N x_2 \ & & \            \ &= \ 0,    \label{eq:lti3}
\end{alignat}
which we will call the \emph{full order model} in the sequel.
In the context of reduced basis methods, the notion \emph{truth approximation} is sometimes used instead. 
The vectors $x_1$ and $x_2$ are the algebraic representations of the states $p$ and $q$ after discretization, and $x_3$ resembles the Lagrange multiplier for the constraint \eqref{eq:sys4}. 
The output of the system is then given by 
\begin{align}
y = B_2^\top x_2. 
\end{align}
If an appropriate discretization is used, the system matrices can be shown to have some basic structural properties. 
For ease of presentation, we formulate them here as assumptions: 
\begin{itemize}\itemsep1ex
 \item[(A0)]  $M_1$, $M_2$, $D$ are symmetric and positive definite and $[G^\top,N^\top]$
              has trivial null-space. 
\end{itemize}
The latter condition is equivalent to requiring that $G$ and the restriction of $N$ to the nullspace of $G$ define surjective linear operators. 
\begin{remark}[Notation] \label{rem:notation}
Throughout the paper, we identify matrices with corresponding linear operators. We 
call a matrix $A$ injective or surjective, if the operator has the respective property,
and we write $\R(A)$ and $\N(A)$ for the range and the kernel of the corresponding operator.
Furthermore $A \VV$ denotes the image of the space $\VV$ under the map induced by $A$.
\end{remark} 
\begin{remark} \label{rem:lti}
The differential-algebraic system \eqref{eq:lti1}--\eqref{eq:lti3} can be shown to formally have differentiation-index two  \cite{BreCP96,KunM06}.
The condition that $N^\top$ is injective, and hence that $N$ is surjective, however, allows to eliminate the Lagrange multiplier by purely algebraic manipulations and hence to reduce the system to an ordinary differential equation; see Section~\ref{sec:ode} and also refer to \cite{BenH15,EmmM13,GugSW13} 
for more general situations. Let us emphasize that the number of constraints amounts to the number of junctions in the network and thus is finite here. 
\end{remark}

\begin{remark}\label{rem:desc}
The system~\eqref{eq:lti1}--\eqref{eq:lti3} can be written 
as linear time-invariant descriptor system
\begin{align}
E \dot x + A x &= B u, \qquad y = B^\top x.
\end{align}
From the particular form of the matrices $E$ and $A$ one can directly deduce the port-Hamiltonian structure, i.e., $E$ is symmetric and positive semi-definite and $A=J+R$ can be decomposed into an skew-symmetric part $J$ and a symmetric positive semi-definite part $R$.
This immediately guarantees the passivity of the system and further useful properties \cite{SchJ14,SchM13}. 
\end{remark}

The mass and the energy of the semi-discrete system \eqref{eq:lti1}--\eqref{eq:lti3} 
can be expressed as
\begin{align}
m_h = o_1^\top M_1 x_1 
\qquad \qquad \text{and} \qquad \qquad 
E_h = \frac{1}{2} \left( x_1^\top M_1 x_1 + x_2^\top M_2 x_2 \right),
\end{align}
where $o_1$ is the vector representing the constant one function on the network. 
The basic properties (P1)--(P4) can then be shown to hold almost verbatim also for the semi-discrete problem which may therefore serve as a replacement for the infinite dimensional partial differential-algebraic problem under investigation. 

\subsection{Structure preserving model reduction} \label{sec:spmr}

The main focus of the current paper is a further dimension reduction of the differential-algebraic model \eqref{eq:lti1}--\eqref{eq:lti3} by structure-preserving Galerkin projection of the following form: 
Given projection matrices $V_1,V_2$ of appropriate size and full rank, 
we set $\widehat M_i = V_i^\top M_i V_i$, $\widehat D=V_2^\top D V_2$, 
$\widehat B_2 = V_2^\top B_2$, $\widehat G=V_2^\top G V_1$, and $\widehat N=N V_2$.
The reduced model is then defined as
\begin{alignat}{5}
\widehat M_1 \dot z_1 \ & & \            \ &+& \ \widehat G x_2 \ & & \                \ &= \ 0,    \label{eq:red1}\\
\widehat M_2 \dot z_2 \ &-& \ \widehat G^\top z_1 \ &+& \ \widehat D z_2 \ &-& \ \widehat N^\top z_3 \ &= \ \widehat B_2 u,  \label{eq:red2}\\
             \ & & \            \ & & \ \widehat N z_2       \ & & \           \ &= \ 0. \label{eq:red3}
\end{alignat}
The tuple $(V_1 z_1, V_2 z_2, z_3)$ is the approximation for the exact solution $(x_1,x_2,x_3)$ of the full order model and $\widehat y = \widehat B_2^\top z_2$ serves as approximation for the output $y = B_2^\top x_2$ of the full system.
\begin{remark} \label{lem:red}
Note that the dimension of the space for the Lagrange multiplier $x_3$ has not been reduced in the above construction; the network topology is thus completely maintained.
The reduced model can again be written in the form of a descriptor system
\begin{align}
\widehat E \dot z + \widehat A z = \widehat B u, \qquad \widehat y = \widehat B^\top \widehat z.
\end{align}
It is well-known \cite{GugercinPolyugaRostyslavBeattieVanDerSchaft12,PolyugaVanDerSchaft11,SalimbahramiLohmannBunseGerstner08} and easy to see for the system considered here that the port-Hamiltonian structure and thus passivity are inherited automatically by this kind of Galerkin projection.
Additional conditions will, however, be required to establish the well-posedness of the resulting reduced differential-algebraic system and to characterize its index; see Section~\ref{sec:basic_red}.
\end{remark}

Similar as before, we will denote by 
\begin{align}
\widehat m_h = \widehat o_1^\top \widehat M_1 z_1 
\qquad \qquad \text{and} \qquad \qquad 
\widehat E_h = \frac{1}{2} \left( z_1^\top \widehat M_1 z_1 + z_2^\top \widehat M_2 z_2 \right)
\end{align}
the mass and energy of the reduced problem \eqref{eq:red1}--\eqref{eq:red3}. 
An appropriate vector $\widehat o_1$ representing the constant one function on the network 
will be needed  and additional compatibility conditions will be required to ensure 
well-posedness of the reduced system and the validity of (P1)--(P4).

\subsection{Algebraic compatibility conditions}

As we will demonstrate by explicit examples below, the validity of some of the properties (P1)--(P4) and even the well-posedness of the reduced models can in general not be guaranteed, unless additional assumptions on the projection matrices $V_i$ are satisfied. We will therefore require that 
\begin{itemize}\itemsep1ex
 \item[(A1)] $o_1 \in \R(V_1)$;
 \item[(A2)] $\R(M_1 V_1) = \R(G V_2)$;
 \item[(A3)] $\N(G) \subset \R(V_2)$  and $N \N(G) = \R(I_3)$. 
\end{itemize}
Recall that $\R(A)$ and $\N(A)$ denote the range and the nullspace of the linear operator induced by a matrix $A$ and $A \VV$ is the the image of the space $\VV$ under mapping induced by $A$. 
Further, $I_3$ here denotes the identity matrix for the third component and $o_1$ is the vector used to describe the total mass $m_h=o_1^\top M_1 x_1$ of the full order system \eqref{eq:lti1}--\eqref{eq:lti3}.

\begin{remark}  \label{rem:A0h}
Assumption (A1) will allow us to prove the conservation of mass also for the reduced models.
The conditions (A2)--(A3), on the other hand, allow us to show that
\begin{itemize}\itemsep1ex
\item[($\widehat{\text{A0}}$)] $\widehat M_1$, $\widehat M_2$, and $\widehat D$ are symmetric and positive definite and $[\widehat G^\top,\widehat N^\top]$ has trivial nullspace;
\end{itemize}
see Lemma~\ref{lem:A0h} for details.
The reduced system thus has the same algebraic properties as the full order model \eqref{eq:lti1}--\eqref{eq:lti3}. 
The well-posedness of the reduced system \eqref{eq:red1}--\eqref{eq:red3} can therefore be obtained with similar arguments as that of the full order model. The differentiation-index of the reduced model is again two
and, by elimination of the Lagrange multiplier, we can obtain a regular system of ordinary differential equations; see Section~\ref{sec:ode} for details. 
\end{remark}

\subsection{Basis construction}

For the actual construction of the projection matrices $V_i$, we consider an extension of 
the approach proposed in \cite{Freund05} together with some modifications in order to satisfy the compatibility conditions (A1)--(A3). The main steps can be sketched as follows:
\begin{itemize}
 \item {\em Krylov iteration:} construct finite dimensional subspaces $\WW^L$ with good approximation properties by a Krylov iteration applied to the full order model \eqref{eq:lti1}--\eqref{eq:lti3}.
 \item {\em Splitting:} Decompose $\WW^L$ as $\WW^L=(\WW_1^L,0,0) + (0,\WW^L_2,0) + (0,0,\WW^L_3)$ according to the components of the state $x=(x_1,x_2,x_3)$. 
 \item {\em Modification:} choose appropriate subspaces $\ZZ_1$ and $\ZZ_2$ and define 
 \begin{align*}
  \VV_1 = \WW_1^L + \ZZ_1, \qquad \VV_2 = \WW_2^L + \ZZ_2, \qquad \text{and} \qquad \VV_3 = \R(I_3),
 \end{align*}
 such that the properties (A1)--(A3) can be verified for any choice of $V_i$, $i=1,2,3$, whose columns form bases for the corresponding subspaces.  
\end{itemize}
With similar arguments as in \cite{Freund05}, the reduced models \eqref{eq:red1}--\eqref{eq:red3} can be shown to match certain moments of the transfer function and thus to have good approximation properties. By construction, the projection matrices also satisfy the compatibility conditions (A1)--(A3).
This will allow us to show that the reduced models are well-posed and that they inherit the structural properties (P1)--(P4) from the full order model. 
\subsection{Overview}
The derivation of the properties (A0) for the system matrices of the full order model and of the algebraic compatibility conditions (A1)--(A3), as well as the complete analysis of the resulting reduced order models require us to consider in detail the connection between
\begin{itemize}\itemsep1ex
\item the underlying partial differential-algebraic equations;
\item their discretization by Galerkin approximations in a function space setting; and
\item the corresponding linear time-invariant systems in algebraic form.
\end{itemize}
A sketch of these different viewpoints is depicted in Figure~\ref{fig:sketch}.
The close relation of the differential-algebraic systems to the problem on the continuous level will allow us to establish properties of the reduced order models that are \emph{uniform} and almost \emph{independent of the intermediate full order model} which is only required for the actual computations.

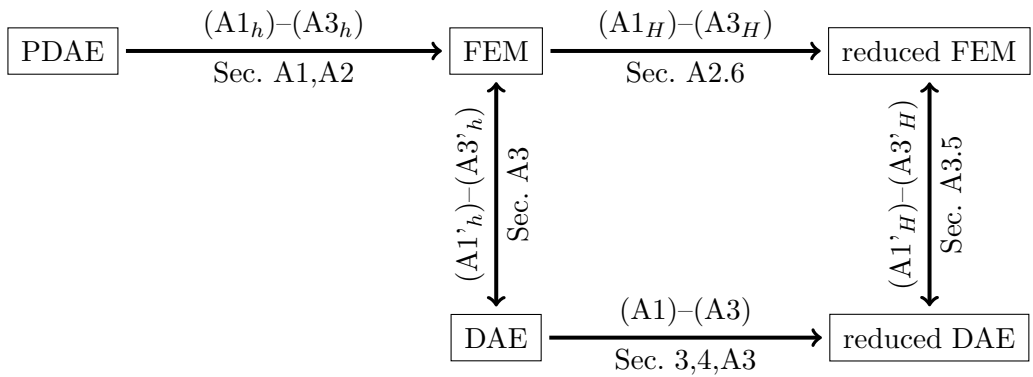
\begin{figure}[ht!]
\begin{center}
\begin{tikzpicture}[scale=1.9]
\node[draw,inner sep=5pt,outer sep=3pt] (v1) at (0,2) {PDAE};
\node[draw,inner sep=5pt,outer sep=3pt] (v2) at (3,2) {FEM};
\node[draw,inner sep=5pt,outer sep=3pt] (v3) at (6,2) {reduced FEM};
\node[draw,inner sep=5pt,outer sep=3pt] (v4) at (3,0) {DAE};
\node[draw,inner sep=5pt,outer sep=3pt] (v5) at (6,0) {reduced DAE};
\draw[->,thick,line width=1.5pt] (v1) -- node[above] {(A1$_h$)--(A3$_h$)} node[below,sloped] {Sec.~\ref{sec:properties},\ref{sec:galerkin}} ++(v2);
\draw[->,thick,line width=1.5pt] (v2) -- node[above,sloped] {(A1$_H$)--(A3$_H$)} node[below,sloped] {Sec.~\ref{sec:strucpresmodelred}} ++(v3);
\draw[<->,thick,line width=1.5pt] (v2) -- node[above,sloped] {(A1'$_h$)--(A3'$_h$)} node[below,sloped] {Sec.~\ref{sec:algebraic}} ++(v4);
\draw[<->,thick,line width=1.5pt] (v3) -- node[above,sloped] {(A1'$_H$)--(A3'$_H$)} node[below,sloped] {Sec.~\ref{sec:algebraicmodred}} ++ (v5);
\draw[->,thick,line width=1.5pt] (v4) -- node[above,sloped] {(A1)--(A3)} node[below,sloped] {Sec.~\ref{sec:prelim},\ref{sec:subspace},\ref{sec:algebraic}}  ++(v5);
\end{tikzpicture}
\end{center}
\caption{Relation between models considered in the manuscript. 
The top row represents the problems in function spaces and the bottom row the corresponding algebraic models. The properties (A1$_h$)--(A3$_h$) and so on correspond to the compatibility conditions (A1)--(A3) on different levels of our analysis.
\label{fig:sketch}} 
\end{figure}

Apart from these analytical considerations, we also investigate in detail the algorithms for the actual subspace construction on the algebraic level and we address the following issues: 
\begin{itemize}\itemsep1ex
\item The splitting step in the subspace construction turns out to be sensitive to numerical errors. 
To overcome this, we utilize a cosine-sine decomposition in the final algorithm. 
\item Round-off errors affect the validity of (A2) after the modification step outlined above. 
      We therefore take special care in the basis construction to satisfy (A2) explicitly. 
\end{itemize}
In order to reflect the functional analytic setting of the underlying infinite dimensional problem, 
we will utilize appropriate scalar products in the formulation of our algorithms on the algebraic level; see \cite{KunischVolkwein01,KunischVolkwein02} for similar approaches.
As a consequence, the vectors obtained in the basis construction process can be interpreted as functions on the continuous level which allows for a further evaluation and interpretation of the numerical results.

\section*{Part I: Model reduction}

In the following two sections, we present our model reduction approach on the algebraic level. 
We discuss in detail the construction of the reduced models and investigate their approximation properties. Futhermore, we address some algorithmic details.

\section{Structure preserving model reduction} \label{sec:prelim}

Let us first recall some basic facts about model order reduction and then informally discuss the algebraic compatibility conditions which are at the core of our model reduction approach. The basis construction algorithms on the algebraic level will then be presented in the next section.

\subsection{Model reduction basics} \label{sec:modelreduction}
Consider a general linear time-invariant descriptor system
\begin{align}\label{eq:descriptor}
 E\dot x + Ax = Bu, \qquad y = B^\top x,
\end{align}
where $E$, $A$, $B$ are given matrices, $E$ symmetric and positive semi-definite,
and $sE+A$ defines a regular matrix pencil, i.e., $sE+A$ is regular for almost every $s \in \RR$;
see \cite{BreCP96,Dai89,KunM06,Meh91} for details. 
A formal expansion of the transfer function of the system \cite{Antoulas05,BenMS05} leads to  
\begin{align} \label{eq:transfer}
 H(s) 
 &:= B^\top(s E+A)^{-1} B 
  = \sum\nolimits_{l=0}^\infty m_l(s_0-s)^l;
\end{align}
here $s_0 \in \CC$ is some given \emph{shift parameter}. 
It is not difficult to see that the generalized moments $m_l$ can be written as 
$m_l=B^\top r_l$ with vectors $r_l$ that can be computed recursively by 
\begin{align}
 (s_0 E + A) r_0      &= B,                        \label{eq:r0}\\
 (s_0 E + A) r_l &= E r_{l-1},\quad l\ge 1.   \label{eq:r1}
\end{align}
Let us denote by $\WW^L=\text{span}\{r_0,\ldots,r_{L-1}\}$ the $L$th Krylov subspace generated by this iteration. 

For any given projection matrix $V$ of appropriate dimension and maximal rank,
we define $\widehat E = V^\top E V$, $\widehat A= V^\top A V$, and $\widehat B=V^\top B$,
and consider the reduced system 
\begin{align} \label{eq:reduced}
\widehat E \dot z + \widehat A z &= \widehat B u, \qquad \widehat y = \widehat B^\top z,
\end{align}
resulting from Galerkin projection of \eqref{eq:descriptor} onto the range of the matrix $V$. 
The transfer function of this reduced model may again be expanded as 
\begin{align}
\widehat H(s)= \widehat B^\top (s \widehat E + \widehat A)^{-1} \widehat B 
= \sum\nolimits_{l=0}^\infty \widehat m_l (s_0-s)^l,
\end{align}
and the relation between the full and the reduced order model can be characterized as follows.
\begin{lemma}[Moment matching] \label{lem:matching}
Let $\WW^L \subset \R(V)$. Then $\widehat m_l = m_l$ for $l=0,\ldots,2L-1$.
\end{lemma}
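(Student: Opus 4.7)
The plan is to introduce reduced Krylov vectors $\widehat r_l$ defined by the analog of \eqref{eq:r0}--\eqref{eq:r1} with $E, A, B$ replaced by $\widehat E, \widehat A, \widehat B$, to prove that $V \widehat r_l = r_l$ for $l = 0, \ldots, L-1$ by induction, and then to cover the remaining indices $l = L, \ldots, 2L-1$ via a bilinear-form identity that rewrites $m_l$ in terms of two lower-indexed Krylov vectors already lying in $\R(V)$.

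For the induction, the base case $l = 0$ uses $r_0 \in \WW^L \subset \R(V)$ to write $r_0 = V c_0$; substituting into $(s_0 E + A) r_0 = B$ and premultiplying by $V^\top$ yields $(s_0 \widehat E + \widehat A) c_0 = \widehat B$, so $c_0 = \widehat r_0$. The inductive step is identical in structure: $r_l \in \WW^L$ lets us write $r_l = V c_l$, and combining the recursion $(s_0 E + A) r_l = E r_{l-1}$ with the hypothesis $V \widehat r_{l-1} = r_{l-1}$ and applying $V^\top$ gives $(s_0 \widehat E + \widehat A) c_l = \widehat E \widehat r_{l-1}$, whence $c_l = \widehat r_l$. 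The matching of the first $L$ moments is then immediate, since $\widehat m_l = \widehat B^\top \widehat r_l = B^\top V \widehat r_l = B^\top r_l = m_l$ for $l = 0, \ldots, L-1$.

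For $l = L, \ldots, 2L-1$, I would establish the bilinear identity $m_{j+k+1} = r_j^\top E r_k$ for all $j, k \geq 0$, obtained by iterated use of the recursion and the defining relation $B = (s_0 E + A) r_0$, together with the symmetry of $E$; an analogous identity $\widehat m_{j+k+1} = \widehat r_j^\top \widehat E \widehat r_k$ holds in the reduced system. Choosing $j, k \leq L-1$ with $j + k + 1 = l$, the first step gives $r_j = V \widehat r_j$ and $r_k = V \widehat r_k$, so the Galerkin relation $\widehat E = V^\top E V$ yields $m_l = r_j^\top E r_k = \widehat r_j^\top \widehat E \widehat r_k = \widehat m_l$.

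The main obstacle is the bilinear identity itself: the shifting argument proceeds cleanly when $A$ is symmetric, whereas for a port-Hamiltonian $A = J + R$ with $J^\top = -J$ each shift produces an extra term involving $A - A^\top = -2J$. These residual terms must be tracked carefully and absorbed, using the specific structure of $A$ and the placement of $B$ in \eqref{eq:lti1}--\eqref{eq:lti3}, either into quantities already matched at lower orders or into pairings of the form $V^\top(\cdot)V$ that behave well under the Galerkin projection. Once the identity is in place, the remainder of the argument is routine bookkeeping from $\widehat E = V^\top E V$, $\widehat A = V^\top A V$, $\widehat B = V^\top B$, and the regularity of the reduced pencil $s \widehat E + \widehat A$.
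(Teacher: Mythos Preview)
The paper does not actually prove this lemma; it simply defers to the standard references (Antoulas, Benner--Mehrmann--Sorensen, Grimme). Your induction argument for the first $L$ moments ($l=0,\dots,L-1$) is correct and is precisely the proof one finds there: from $r_l\in\R(V)$ and the Galerkin relations one obtains $V\widehat r_l=r_l$, and hence $\widehat m_l=\widehat B^\top\widehat r_l=B^\top r_l=m_l$.

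The second half, however, has a genuine gap, and your own diagnosis of the obstacle is accurate but too optimistic about its resolution. The shifting identity $m_{j+k+1}=r_j^\top E r_k$ requires $s_0E+A$ to be symmetric: each shift generates a residual $r_j^\top(A-A^\top)r_{k+1}$, and with a nontrivial skew part these terms neither vanish nor land in any expression of the form $V^\top(\cdot)V$ that would make them cancel under the Galerkin projection. In fact the $2L$-moment claim is false in the generality stated in the lemma. A two-dimensional example already shows this: with
\[
E=I,\qquad A=\begin{bmatrix}1&1\\-1&1\end{bmatrix},\qquad B=\begin{bmatrix}1\\0\end{bmatrix},\qquad s_0=0,
\]
one computes $r_0=\tfrac12(1,1)^\top$, $m_0=\tfrac12$, $m_1=0$, while one-sided projection onto $\R(V)=\text{span}\{r_0\}$ gives $\widehat E=\widehat A=1$, $\widehat B=1/\sqrt2$, and hence $\widehat m_1=\tfrac12\neq 0$. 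So for $L=1$ only one moment is matched, not two. The $2L$-moment result in the cited references applies either to two-sided Petrov--Galerkin projection with separate input and output Krylov spaces, or to one-sided Galerkin projection when $s_0E+A$ is symmetric (so that the two Krylov spaces coincide); neither hypothesis is in force here. In short, your argument for the first $L$ moments is fine and standard, but the route you sketch for the remaining $L$ cannot succeed---the skew term you flag is a fatal obstruction, not a technicality, and the lemma as stated appears to overclaim.
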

\noindent 
A proof of this assertion and further results can be found in \cite{Antoulas05,BenMS05,Grimme97}.
The lemma can be interpreted as an abstract approximation result and may therefore serve as a quality indicator. 

\subsection{Basic properties of the full order model} \label{sec:basic}

The system \eqref{eq:lti1}--\eqref{eq:lti3} can be written in the
compact form \eqref{eq:descriptor} with state vector $x=(x_1,x_2,x_3)$ and with system matrices defined by
\begin{align} \label{eq:matrices}
E=\begin{bmatrix} M_1 & 0 & 0 \\ 0 & M_2 & 0 \\ 0 & 0 & 0 \end{bmatrix},
\qquad 
A=\begin{bmatrix} 0 & G & 0 \\ -G^\top & D & -N^\top \\ 0 & N & 0\end{bmatrix}, 
\qquad 
B=\begin{bmatrix} 0 \\ B_2 \\ 0 \end{bmatrix},
\end{align}
and under basic structural assumptions, the system \eqref{eq:descriptor} can be shown to be well-posed.
\begin{lemma} \label{lem:pencil}
Let (A0) hold and let $E$ and $A$ be defined as above.
Then $s E + A$ defines a regular matrix pencil.
Moreover, $sE  + A$ is regular for any $s \ge 0$ and, in particular, $A$ is regular.
\end{lemma}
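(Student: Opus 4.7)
The plan is to prove the regularity claims by a direct energy-type argument: show that for any $s \ge 0$ the linear system $(sE+A)x=0$ has only the trivial solution, which gives invertibility of $sE+A$ for every $s \ge 0$ (in particular of $A$), and then deduce regularity of the matrix pencil from the fact that $\det(sE+A)$ is a polynomial in $s$ which is non-vanishing at $s=0$.

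Concretely, I would unfold $(sE+A)x=0$ with $x=(x_1,x_2,x_3)$ into the three block equations
\begin{align*}
 s M_1 x_1 + G x_2 &= 0, \\
 -G^\top x_1 + s M_2 x_2 + D x_2 - N^\top x_3 &= 0, \\
 N x_2 &= 0,
\end{align*}
and then form the scalar $x_2^\top(\cdot)$ of the second equation. The third equation makes $x_2^\top N^\top x_3 = (Nx_2)^\top x_3$ vanish, and substituting $G x_2 = -sM_1 x_1$ from the first equation turns the coupling term $-x_2^\top G^\top x_1$ into $sx_1^\top M_1 x_1$. What remains is the energy identity
\begin{align*}
 s\, x_1^\top M_1 x_1 + s\, x_2^\top M_2 x_2 + x_2^\top D x_2 = 0.
\end{align*}

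For $s>0$ all three terms are non-negative by (A0), so each vanishes and positive definiteness of $M_1, M_2$ forces $x_1=x_2=0$; the second block equation then reduces to $N^\top x_3 = 0$, and the trivial-nullspace assumption on $[G^\top,N^\top]$ (applied with vanishing $G^\top$-component) gives $x_3=0$. For $s=0$ the first equation degenerates to $Gx_2=0$, but combined with $Nx_2=0$ the two coupling terms $x_2^\top G^\top x_1$ and $x_2^\top N^\top x_3$ in the $x_2^\top$-test already vanish on their own, so one obtains $x_2^\top D x_2 = 0$ and hence $x_2=0$; the second block then reduces to $G^\top x_1 + N^\top x_3 = 0$, and (A0) yields $x_1=0$ and $x_3=0$. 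This shows that $A=0\cdot E + A$ is regular and, more generally, that $sE+A$ is invertible for every $s\ge 0$.

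Finally, since $\det(sE+A)$ is a polynomial in $s$ whose value at $s=0$ is nonzero, it is not identically zero, which by definition means that $sE+A$ is a regular pencil. The only mildly subtle point is the handling of the Lagrange-multiplier block: one must first use the constraint $Nx_2=0$ to remove $x_3$ from the energy identity, and only afterwards recover $x_3=0$ from the rank condition in (A0). I do not expect any real obstacle beyond carefully separating the cases $s>0$ and $s=0$, where the coupling term $Gx_2$ is controlled in two slightly different ways.
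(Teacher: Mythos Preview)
Your proof is correct. For $s>0$ it is essentially the paper's argument: the paper multiplies the full system $(sE+A)x=0$ by $x^\top$ from the left, which---using that the off-diagonal blocks of $A$ are skew-symmetric---yields the same energy identity $s(x_1^\top M_1 x_1 + x_2^\top M_2 x_2) + x_2^\top D x_2 = 0$ in one stroke; you reach it by testing the second block equation with $x_2^\top$ and substituting from the first and third.

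The genuine difference is the case $s=0$. You give a self-contained elementary argument: use $Gx_2=0$ and $Nx_2=0$ to strip the coupling terms from the $x_2^\top$-test, obtain $x_2=0$ from positive definiteness of $D$, and then read off $x_1=x_3=0$ from the injectivity of $[G^\top,N^\top]$. The paper instead permutes the blocks of $A$ into the saddle-point form
\[
\widetilde A = \begin{bmatrix} D & -G^\top & -N^\top \\ G & 0 & 0 \\ N & 0 & 0 \end{bmatrix}
\]
and invokes Brezzi's result that such a matrix is invertible iff $D$ is invertible and $[G^\top,N^\top]$ is injective. Your route is more elementary and avoids the external reference; the paper's route makes the link to mixed/saddle-point theory explicit, which is thematically relevant because the same structure recurs in the infinite-dimensional analysis later in the appendix. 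Your final step---deducing pencil regularity from $\det(sE+A)$ being a polynomial that is nonzero at $s=0$---is also fine and slightly more explicit than the paper, which leaves this implication implicit.
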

\begin{proof}
First consider the case $s >0$ and set $x=(x_1,x_2,x_3)$. 
We show that $0 = y = (sE+A) x$ implies $x=0$. 
Multiplication with $x^\top$ from the left yields
\begin{align*}
0 = x^\top (sE+A) x = s (x_1^\top M_1 x_1 + x_2^\top M_2 x_2) + x_2^\top D x_2.
\end{align*}
Since $M_i$ and $D$ are positive definite, this implies that $x_1=0$ and $x_2=0$. 
But then 
\begin{align*}
0 = y_2 = s M_2 x_2 - G^\top x_1 + D x_2 - N^\top x_3 = -N^\top x_3.
\end{align*}
From the assumptions on $G$ and $N$, we can deduce the injectivity of $N^\top$, and hence $x_3=0$.\\
Now consider the case $s=0$: By simple rearrangement of the blocks of the matrix $A$, we obtain 
\begin{align*}
\widetilde A = 
\begin{bmatrix} A_{22} & A_{21} & A_{23} \\ A_{12} & A_{11} & A_{13} \\ A_{32} & A_{31} & A_{33} \end{bmatrix} 
= 
\begin{bmatrix} D & -G^\top & -N^\top \\ G & 0 & 0 \\ N & 0 & 0 \end{bmatrix}. 
\end{align*}
Such a system is regular, if, and only if, $D$ is regular and $[G^\top,N^\top]$ has trivial nullspace; see~\cite{Brezzi74} for the corresponding result in infinite dimensions. 
These properties are guaranteed by assumption~(A0), which yields the invertibility of $\tilde A$ and hence also of $A$. 
\end{proof}

\begin{remark}
Only the injectivity of $N^\top$, or equivalently, the surjectivity of $N$ is required to obtain a regular matrix pencil $sE+A$ for $s>0$ and thus to establish well-posedness for the time-dependent problem. The stronger condition that $[G^\top,N^\top]$ is injective is, however, necessary to obtain regularity of the matrix $A$ and thus to ensure existence of unique steady states. 
\end{remark}

\subsection{Properties of the reduced problem} \label{sec:basic_red}

The reduced descriptor system \eqref{eq:reduced}, representing the reduced order model \eqref{eq:red1}--\eqref{eq:red3}, can be obtained by Galerkin projection of the descriptor system \eqref{eq:descriptor}, which represents \eqref{eq:lti1}--\eqref{eq:lti3}, with a projection matrix $V$ of the form
\begin{align}  \label{eq:V}
V = \begin{bmatrix} V_1 & 0 & 0 \\ 0 & V_2 & 0 \\ 0 & 0 & V_3 \end{bmatrix} \qquad \text{and} \qquad V_3=I_3.
\end{align}
Recall that $I_3$ is the identity matrix for the space of Lagrange multipliers and note again that we did not reduce the number of constraints here.
For a projection matrix $V$ of this form, 
the particular algebraic structure of the full order model is directly passed on to the reduced order model.
The compatibility conditions (A2)--(A3) further allow us to establish the structural properties corresponding to (A0) also for the system matrices of the reduced model.
\begin{lemma} \label{lem:A0h}
Let (A0) and (A2)--(A3) hold and let $V_1$, $V_2$ be injective. 
Then ($\widehat{\text{A0}}$) holds.
\end{lemma}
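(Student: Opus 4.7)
The plan is to verify $(\widehat{\text{A0}})$ in two stages: first the positive definiteness and symmetry of $\widehat M_1, \widehat M_2, \widehat D$, which is essentially bookkeeping, and then the harder part, the triviality of $\N([\widehat G^\top, \widehat N^\top])$, which is where the compatibility conditions (A2)--(A3) genuinely enter.

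For the first stage, I would note that $\widehat M_i = V_i^\top M_i V_i$ and $\widehat D = V_2^\top D V_2$ are manifestly symmetric, and that for any nonzero $z$, the injectivity of $V_i$ gives $V_i z \neq 0$, so that
\begin{equation*}
z^\top \widehat M_i z = (V_i z)^\top M_i (V_i z) > 0
\end{equation*}
by (A0), and similarly for $\widehat D$. This is routine and needs no further comment.

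For the second stage, suppose $\widehat G^\top z_1 + \widehat N^\top z_3 = 0$, which by the definitions $\widehat G = V_1^\top G V_2$ and $\widehat N = N V_2$ reads
\begin{equation*}
V_2^\top \bigl(G^\top V_1 z_1 + N^\top z_3 \bigr) = 0,
\end{equation*}
i.e., $G^\top V_1 z_1 + N^\top z_3 \in \R(V_2)^\perp$. The first half of (A3), $\N(G) \subset \R(V_2)$, gives the reverse inclusion $\R(V_2)^\perp \subset \N(G)^\perp$, so the vector above is orthogonal to $\N(G)$. Testing against an arbitrary $w \in \N(G)$ and using $Gw=0$ collapses the pairing to $\langle z_3, Nw\rangle = 0$. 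The second half of (A3), $N\N(G) = \R(I_3)$, then forces $z_3 = 0$. With $z_3$ eliminated, one is left with $V_1 z_1 \perp \R(GV_2)$, and (A2) lets me replace $\R(GV_2)$ by $\R(M_1 V_1)$. Specializing to the element $M_1 V_1 z_1 \in \R(M_1 V_1)$ gives $\langle V_1 z_1, M_1 V_1 z_1 \rangle = 0$, which by positive definiteness of $M_1$ yields $V_1 z_1 = 0$, and hence $z_1 = 0$ by injectivity of $V_1$.

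The main obstacle I anticipate is simply keeping track of which compatibility condition kills which unknown: (A3) handles $z_3$ by exploiting the fact that $V_2$ is rich enough to contain $\N(G)$ while $N$ restricted to $\N(G)$ is still surjective, and (A2) is precisely the identity that converts the leftover orthogonality statement about $G^\top V_1 z_1$ into one involving $M_1$, so that positive definiteness can be invoked. Once this correspondence is recognized, the proof is a short direct computation; no additional structural result or passage through the underlying function-space problem is needed.
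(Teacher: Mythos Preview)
Your proof is correct. It differs from the paper's argument in organisation, though the ingredients are the same. The paper proceeds by establishing two surjectivity statements separately and then invoking their equivalence with the injectivity of $[\widehat G^\top,\widehat N^\top]$: first it uses (A2) to show $\widehat G$ is surjective (given $z_1$, pick $z_2$ with $G V_2 z_2 = M_1 V_1 z_1$ and compute $z_1^\top \widehat G z_2 = z_1^\top \widehat M_1 z_1>0$), and then uses (A3) to show that $\widehat N$ restricted to $\N(\widehat G)$ is surjective. You instead attack the kernel of $[\widehat G^\top,\widehat N^\top]$ directly, and in the opposite order: (A3) first, via the orthogonality $\R(V_2)^\perp \subset \N(G)^\perp$, to force $z_3=0$, and then (A2) to kill $z_1$. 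Your route is slightly more self-contained because it avoids the final appeal to the equivalence ``$\widehat G$ onto and $\widehat N|_{\N(\widehat G)}$ onto $\iff [\widehat G^\top,\widehat N^\top]$ injective''; the paper's route has the minor advantage that it records those two intermediate surjectivity facts explicitly, which are used informally elsewhere (e.g.\ in discussing elimination of the Lagrange multiplier).
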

\begin{proof}
The conditions on $\widehat M_i$ and $\widehat D$ are clearly satisfied, if (A0) is valid and the columns of the projection matrices $V_i$ are linearly independent.
Due to assumption (A2), we can find for any vector $z_1$ 
a corresponding vector $z_2$ such that $G V_2 z_2 = M_1 V_1 z_1$. With this choice, we obtain 
\begin{align*}
(\widehat G^\top z_1)^\top z_2 = z_1^\top \widehat G z_2 
&= z_1^\top V_1^\top G V_2 z_2 
 = z_1^\top V_1^\top M_1 V_1 z_1 = z_1^\top \widehat M_1 z_1. 
\end{align*}
Since $\widehat M_i$ is symmetric positive definite, we obtain $z_1^\top \widehat M_1 z_1>0$ whenever $z_1 \neq 0$, and consequently $\widehat G^\top$ is injective,
or equivalently, $\widehat G$ is surjective. 
Using assumptions (A0) and (A3), 
we can further find for any $x_3=z_3$ a vector $x_2 \in N(G)$ and a vector $z_2$ such that 
\begin{align*}
x_3 = N x_2 = N V_2 z_2 \qquad \text{and} \qquad 0 = G x_2 = G V_2 z_2. 
\end{align*}
This shows that the restriction of $\widehat N$ to the nullspace of $\widehat G$ is surjective.
Together with $\widehat G$ being surjective this is equivalent to $[\widehat G^\top,\widehat N^\top]$ being injective. 
\end{proof}
With the same reasoning as in Lemma~\ref{lem:pencil}, we now obtain
\begin{lemma} \label{lem:pencil_red}
Let (A0) and (A2)--(A3) hold and $V_1$, $V_2$ be injective. 
Then the matrix pencil of the reduced problem is regular.
In particular, $s \widehat E + \widehat A$ is regular for $s \ge 0$ and thus $\widehat A$ is regular.
\end{lemma}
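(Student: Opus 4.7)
The plan is to reduce this lemma to the combination of Lemma~\ref{lem:A0h} and Lemma~\ref{lem:pencil}, since the Galerkin projection with a block-diagonal $V$ of the form \eqref{eq:V} preserves the block structure of the matrices $E$ and $A$ in \eqref{eq:matrices}. More precisely, by the definitions of $\widehat M_i$, $\widehat D$, $\widehat G$, $\widehat N$, and the choice $V_3 = I_3$, the reduced matrices $\widehat E$ and $\widehat A$ have exactly the same $3\times 3$ block pattern as $E$ and $A$, only with the original blocks replaced by their hatted counterparts.

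My first step is to invoke Lemma~\ref{lem:A0h}: its hypotheses (A0), (A2)--(A3), and the injectivity of $V_1$ and $V_2$ are precisely those assumed here, so we obtain ($\widehat{\text{A0}}$), namely that $\widehat M_1$, $\widehat M_2$, $\widehat D$ are symmetric positive definite and $[\widehat G^\top, \widehat N^\top]$ has trivial nullspace.

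The second step is then to repeat verbatim the argument given in the proof of Lemma~\ref{lem:pencil}, substituting $\widehat E$, $\widehat A$, and $(\widehat M_1, \widehat M_2, \widehat D, \widehat G, \widehat N)$ for $E$, $A$, and $(M_1, M_2, D, G, N)$. Concretely, for $s>0$, testing $(s\widehat E + \widehat A)z = 0$ against $z^\top$ yields $s(z_1^\top \widehat M_1 z_1 + z_2^\top \widehat M_2 z_2) + z_2^\top \widehat D z_2 = 0$, which forces $z_1 = z_2 = 0$ by positive definiteness, and then the second block equation reduces to $\widehat N^\top z_3 = 0$, which gives $z_3 = 0$ by the injectivity of $\widehat N^\top$. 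For $s=0$, the same permutation of blocks as in Lemma~\ref{lem:pencil} together with the Brezzi-type criterion applied under ($\widehat{\text{A0}}$) shows that $\widehat A$ is invertible.

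Since $s\widehat E + \widehat A$ is regular for every $s \ge 0$, it is in particular regular for almost every $s \in \RR$, so the pencil is regular. I do not anticipate any genuine obstacle: the entire content of the lemma is that the two ingredients (preservation of the algebraic structure via Lemma~\ref{lem:A0h}, and the generic well-posedness argument of Lemma~\ref{lem:pencil}) can be chained. The only thing to be slightly careful about is that $\widehat E$ is block-diagonal with a zero block in the third component exactly as $E$ is, which is immediate from $V_3 = I_3$ and the block structure of $E$.
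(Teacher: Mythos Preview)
Your proposal is correct and matches the paper's approach exactly: the paper simply states that Lemma~\ref{lem:pencil_red} follows ``with the same reasoning as in Lemma~\ref{lem:pencil},'' relying on Lemma~\ref{lem:A0h} to supply ($\widehat{\text{A0}}$) for the reduced matrices. Your write-up makes the two steps and the structural preservation under the block-diagonal projection~\eqref{eq:V} explicit, which is precisely the intended argument.
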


\begin{remark} 
As a consequence of Lemma~\ref{lem:pencil_red}, we see that the system \eqref{eq:red1}--\eqref{eq:red3} representing the reduced problem is well-posed and possesses unique steady states. By construction, the system also inherits the port-Hamiltonian structure and passivity. 
Furthermore, the reduced differential-algebraic system again has differentiation-index two,
and due to injectivity of $N^\top$, the constraints can be eliminated algebraically 
and the reduced system can thus again be reduced to an ordinary differential equation. 
\end{remark}

\section{Subspace and basis construction}\label{sec:subspace}

It is well understood \cite{Antoulas05,BenMS05} that the model reduction approach outlined above amounts to a Galerkin projection of the full order system onto subspaces $\VV_i = \R(V_i)$ generated by the columns of the projection matrices $V_i$, $i=1,2,3$. 
In the following, we will change between the algebraic viewpoint and that of function spaces viewpoint as convenient.

We consider a construction of the subspaces $\VV_i$ in the form
\begin{align} \label{eq:VV}
\VV_1 =  \WW_1 + \ZZ_1,
\qquad 
\VV_2 =  \WW_2 + \ZZ_2,
\qquad \text{and} \qquad
\VV_3=\R(I_3).
\end{align}
A Krylov iteration \cite{Freund05} together with an appropriate splitting is used for generation of the spaces $\WW_1$, $\WW_2$, which by Lemma~\ref{lem:matching} automatically 
ensures good approximation properties of the resulting reduced model. 
The spaces $\ZZ_1$, $\ZZ_2$, on the other hand, will be chosen 
in order to guarantee the compatibility conditions (A1)--(A3).
After definition of the subspaces $\VV_1$ and $\VV_2$, 
we also present the algorithms for the actual computation of the projection matrices $V_1$ and $V_2$. 

\subsection{Construction of the spaces $\WW_1$ and $\WW_2$}

We start with applying the Krylov subspace iteration \eqref{eq:r0}--\eqref{eq:r1} to the particular system \eqref{eq:lti1}--\eqref{eq:lti3}. 
The first step now reads
\begin{alignat}{5}
s_0 M_1 x_1^0 \ & & \            \ &+& \ G x_2^0 \ & & \                \ &= \ 0,    \label{eq:iter0a}\\
s_0 M_2 x_2^0 \ &-& \ G^\top x_1^0 \ &+& \ D x_2^0 \ &+& \ N^\top x_3^0 \ &= \ B_2,  \label{eq:iter0b}\\
             \ & & \            \ & & \  N x_2^0     \ & & \           \ &= \ 0, \label{eq:iter0c}
\end{alignat}
and for $l \ge 1$ the further iterations are defined accordingly by 
\begin{alignat}{5}
s_0 M_1 x_1^l \ & & \            \ &+& \ G x_2^l \ & & \                \ &= \ M_1 x_1^{l-1},    \label{eq:itera}\\
s_0 M_2 x_2^l \ &-& \ G^\top x_1^l \ &+& \ D x_2^l \ &+& \ N^\top x_3^l \ &= \ M_2 x_2^{l-1},  \label{eq:iterb}\\
             \ & & \            \ & & \ N x_2^l       \ & & \          \ &= \ 0. \label{eq:iterc}
\end{alignat}
As a direct consequence of Lemma~\ref{lem:pencil}, we obtain 
\begin{lemma}[Krylov subspace iteration] \label{lem:krylov}
Let the assumption (A0) be valid. 
Then the iterative scheme \eqref{eq:iter0a}--\eqref{eq:iterc} 
is well-defined for any $s_0 \ge 0$ and all $l \ge 0$.
\end{lemma}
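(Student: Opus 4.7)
The plan is to observe that Lemma~\ref{lem:krylov} is essentially an immediate corollary of Lemma~\ref{lem:pencil}, translated into the concrete block form of the iteration.

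First I would note that the iteration steps \eqref{eq:iter0a}--\eqref{eq:iter0c} and \eqref{eq:itera}--\eqref{eq:iterc} can be written compactly as
\begin{equation*}
(s_0 E + A) x^l = f^l,
\end{equation*}
where $E$, $A$ are the block matrices introduced in \eqref{eq:matrices}, $x^l = (x_1^l, x_2^l, x_3^l)$, and the right-hand side is $f^0 = (0, B_2, 0)$ for the initial step and $f^l = (M_1 x_1^{l-1}, M_2 x_2^{l-1}, 0) = E x^{l-1}$ for $l \ge 1$. Up to the sign convention for the Lagrange multiplier component (which can be absorbed by replacing $x_3^l$ with $-x_3^l$ and does not affect solvability), this is exactly the generic recursion \eqref{eq:r0}--\eqref{eq:r1} specialized to our descriptor system, so the whole issue reduces to invertibility of $s_0 E + A$.

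Next I would invoke Lemma~\ref{lem:pencil}, which under assumption (A0) guarantees that $s E + A$ is regular for every $s \ge 0$, in particular for the chosen shift $s_0 \ge 0$. Hence the linear system defining $x^l$ has a unique solution at every step, and the iteration is well-defined.

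Finally, an induction on $l$ completes the argument: the base case $l = 0$ is handled directly since the right-hand side $(0, B_2, 0)$ is fixed, while for the inductive step one uses that $x^{l-1}$ is well-defined (by the inductive hypothesis) to conclude that the right-hand side $E x^{l-1}$ is well-defined, and then applies invertibility of $s_0 E + A$ once more. There is no real obstacle here; the lemma is essentially a bookkeeping statement that packages the regularity result of Lemma~\ref{lem:pencil} in the form needed for subsequent use in the basis construction.
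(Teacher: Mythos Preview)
Your proposal is correct and matches the paper's own treatment: the paper simply states that Lemma~\ref{lem:krylov} is ``a direct consequence of Lemma~\ref{lem:pencil}'' and gives no further proof. Your additional observation about the sign of the Lagrange multiplier term and the explicit induction are sound elaborations of exactly this argument.
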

Let us denote the subspaces spanned by the iterates $x_i^l$ generated by \eqref{eq:iter0a}--\eqref{eq:iterc} with
\begin{align} \label{eq:VVil}
\WW_i^{L} &= \text{span}\{x_i^0,\ldots,x_i^{L-1}\}, \qquad i=1,2.
\end{align}
The special structure of the iteration allows us to show the following result.
\begin{lemma}[Properties of subspaces] \label{lem:subspace}
Let $s_0>0$, then $M_1 \WW_1^L = G \WW_2^L$,
while for $s=0$ we have $M_1 \WW_1^{L} = G \WW_2^{L+1}$.
Moreover, $N x_2 = 0$ for any $x_2 \in \WW_2^L$ and any choice of $s_0 \ge 0$.
\end{lemma}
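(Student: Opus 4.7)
The plan is to work directly with the defining Krylov recursions \eqref{eq:iter0a}--\eqref{eq:iterc}; each of the three claims reduces to a short manipulation of the first and third block rows of the system, and the momentum equation will not enter at all.

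I would dispatch the last claim first, as it is essentially by inspection: the third block row of every Krylov step enforces $N x_2^l = 0$ for all $l \ge 0$, and this property passes to linear combinations, so $N$ vanishes on all of $\WW_2^L$ regardless of the shift $s_0 \ge 0$. For the first claim, with $s_0 > 0$, I would rearrange the first block row as $G x_2^0 = -s_0 M_1 x_1^0$ and $G x_2^l = M_1 x_1^{l-1} - s_0 M_1 x_1^l$ for $l \ge 1$. Read in one direction, this immediately gives $G \WW_2^L \subseteq M_1 \WW_1^L$. Read in the other direction, after dividing by $s_0$, one obtains the recursive formula $M_1 x_1^l = s_0^{-1}(M_1 x_1^{l-1} - G x_2^l)$, and a short induction on $l$ starting from $M_1 x_1^0 = -s_0^{-1} G x_2^0 \in G \WW_2^L$ yields the reverse inclusion.

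For $s_0 = 0$ the same recursions degenerate in a telling way: the $l=0$ step becomes $G x_2^0 = 0$ and for $l \ge 1$ one simply has $G x_2^l = M_1 x_1^{l-1}$. Comparing generating sets gives $G \WW_2^{L+1} = \operatorname{span}\{G x_2^0, G x_2^1, \ldots, G x_2^L\} = \operatorname{span}\{0, M_1 x_1^0, \ldots, M_1 x_1^{L-1}\} = M_1 \WW_1^L$, which is the claimed identity. The only genuinely delicate point is keeping track of the index shift: when $s_0 > 0$, the term $-s_0 M_1 x_1^l$ in the rearranged recursion allows $M_1 x_1^{L-1}$ to be expressed using the iterate $x_2^{L-1}$ already present in $\WW_2^L$; when $s_0 = 0$ this term vanishes, and representing $M_1 x_1^{L-1}$ now requires $x_2^L$, which is precisely what forces the enlargement of $\WW_2^L$ to $\WW_2^{L+1}$. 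Beyond this observation the argument is essentially bookkeeping and requires no appeal to assumption (A0) beyond what is already used for well-posedness of the iteration in Lemma~\ref{lem:krylov}.
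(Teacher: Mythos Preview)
Your proof is correct and follows essentially the same approach as the paper: both arguments manipulate the first block row \eqref{eq:iter0a}/\eqref{eq:itera} and proceed by induction on $L$, with the third block row yielding $N x_2^l = 0$ immediately. Your version is simply more explicit about the two inclusions and about the scaling by $s_0$, which the paper glosses over (writing ``$M_1 x_1^0 = -G x_2^0$'' when what actually follows is $M_1 x_1^0 = -s_0^{-1} G x_2^0$), but the underlying idea and structure are the same.
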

\begin{proof}
First consider $s_0>0$:  Then from \eqref{eq:iter0a}, 
we see that $M_1 x_1^0 = - G x_2^0$, and thus the first assertion holds true for $L=0$. 
By \eqref{eq:itera} and induction, we obtain $M_1 \WW_1^L = G \WW_2^L$ for all $L \ge 1$. 
Now consider the case $s_0=0$:
Then from \eqref{eq:iter0a}, one can deduce that $x_2^0 \in \N(G)$.  
Using \eqref{eq:itera} with $L=1$, we further see that $M_1 x_1^{0} = G x_2^{1}$,
and hence $M_1 \WW_1^1 = G \WW_2^2$. By  \eqref{eq:itera} and induction, we
finally obtain the result for all $L \ge 1$ again.
\end{proof}

\subsection{Construction of the spaces $\ZZ_1$ and $\ZZ_2$}

The structural assumption (A0) particularly implies that $G$ is surjective which allows us to ensure the following property. 
\begin{lemma} \label{lem:pseudo}
Let assumption (A0) hold. Then for any choice of $g_1$ and $g_3$, the system 
\begin{align*}
G x_2 = g_1 \qquad \text{and} \qquad N x_2 = g_3
\end{align*}
has at least one solution $x_2$. 
We denote by $x_2^\dag$ the unique solution that also
minimizes $x_2^\top M_2 x_2$. 
This minimum-norm solution $x_2^\dag$ can be expressed as 
\begin{align*}
x_2^\dag = \begin{bmatrix} G \\ N \end{bmatrix}^\dag \begin{bmatrix} g_1 \\ g_3 \end{bmatrix}
\qquad \text{with} \qquad
\begin{bmatrix} G \\ N\end{bmatrix}^{\dag} 
= M_2^{-1} \begin{bmatrix}G^\top & N^\top\end{bmatrix} \left(\begin{bmatrix} G \\ N \end{bmatrix} M_2^{-1} \begin{bmatrix}G^\top & N^\top\end{bmatrix}\right)^{-1}
\end{align*}
denoting the pseudo-inverse with respect to the scalar product induced by $M_2$. 
\end{lemma}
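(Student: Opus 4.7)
The plan is to split the claim into three pieces: existence of at least one solution, invertibility of the ``Gram matrix'' $\begin{bmatrix} G \\ N \end{bmatrix} M_2^{-1} \begin{bmatrix} G^\top & N^\top\end{bmatrix}$ appearing in the formula for the pseudo-inverse, and then derivation of the minimum-norm solution via a Lagrange-multiplier computation.

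For existence, set $K = \begin{bmatrix} G \\ N \end{bmatrix}$ and $g = \begin{bmatrix} g_1 \\ g_3 \end{bmatrix}$. Assumption (A0) states that $[G^\top,N^\top] = K^\top$ has trivial nullspace, so $K^\top$ is injective, equivalently $K$ is surjective. Hence for any right-hand side $g$ the affine set $\{x_2 : K x_2 = g\}$ is non-empty.

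For invertibility of $K M_2^{-1} K^\top$, observe that this matrix is symmetric and, since $M_2$ is positive definite, positive semi-definite. If $K M_2^{-1} K^\top y = 0$, then $y^\top K M_2^{-1} K^\top y = \|M_2^{-1/2} K^\top y\|^2 = 0$, so $K^\top y = 0$, and injectivity of $K^\top$ forces $y=0$. Thus $K M_2^{-1} K^\top$ is symmetric positive definite and in particular invertible, so the formula on the right-hand side of the lemma is well-defined.

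For uniqueness and the explicit form of the minimum-norm solution, I would consider the strictly convex constrained quadratic program $\min \tfrac{1}{2} x_2^\top M_2 x_2$ subject to $K x_2 = g$. Strict convexity of the objective together with non-emptiness of the feasible set (from existence) yields a unique minimizer. The corresponding first-order optimality conditions read $M_2 x_2 + K^\top \lambda = 0$ and $K x_2 = g$; eliminating $x_2 = -M_2^{-1} K^\top \lambda$ gives $-K M_2^{-1} K^\top \lambda = g$, hence $\lambda = -(K M_2^{-1} K^\top)^{-1} g$ by the invertibility established above, and therefore
\begin{equation*}
x_2^\dag = M_2^{-1} K^\top (K M_2^{-1} K^\top)^{-1} g,
\end{equation*}
which is exactly the stated formula. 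There is no real obstacle — the only subtle point is making sure that (A0) is strong enough both to give surjectivity of $K$ (for existence) and to give injectivity of $K^\top$ (for invertibility of the Schur complement); both are simply the trivial-nullspace condition on $[G^\top,N^\top]$ read in the two equivalent ways.
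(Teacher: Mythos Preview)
Your argument is correct and complete. The paper itself does not supply a proof for this lemma; it is stated as a standard fact about weighted pseudo-inverses and used directly in the subsequent construction. Your three-step argument (surjectivity of $K$ from the trivial-nullspace condition on $K^\top$, positive definiteness of $K M_2^{-1} K^\top$, and the KKT/Lagrange derivation of the explicit minimizer) is exactly the standard justification one would give, and there are no gaps.
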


We can now define $o_2$ as the minimum-norm solution of $G o_2 = M_1 o_1$ and $N o_2=0$ 
and set
\begin{align} \label{eq:ZZ}
\ZZ_1 = \text{span}\{o_1\}  \qquad \text{and} \qquad 
\ZZ_2 = \text{span}\{o_2\} + \N(G) \qquad \text{for } s_0>0.
\end{align}
If $s_0=0$, then we choose $\ZZ_2 = \text{span}\{o_2\} + \N(G) + \text{span}\{x_2^L\}$ instead.
As a consequence of this construction and the observations made in Lemma~\ref{lem:matching} and \ref{lem:subspace}, we obtain the following result. 
\begin{lemma}[Compatibility] \label{lem:spaces}
Define $\VV_1 = \WW_1^{L} + \ZZ_1$ and $\VV_2 = \WW_2^L + \ZZ_2$, 
and let $V_i$ denote matrices whose columns form bases for $\VV_i$, $i=1,2$, orthogonal with respect to the scalar products induced by the matrices $M_i$, respectively.
Then the assumptions (A1)--(A3) are satisfied and the reduced model \eqref{eq:red1}--\eqref{eq:red3} satisfies the moment matching conditions of Lemma~\ref{lem:matching}.
\end{lemma}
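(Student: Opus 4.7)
The plan is to verify the three compatibility conditions (A1)--(A3) directly from the defining expressions \eqref{eq:VV} and \eqref{eq:ZZ}, using Lemma~\ref{lem:subspace} to handle the Krylov portions, and then to invoke Lemma~\ref{lem:matching} to obtain moment matching essentially for free. Throughout, I will freely identify the range of $V_i$ with $\VV_i$, which is legitimate since the $V_i$ are full-rank bases by construction.

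For (A1), I would observe that $o_1 \in \ZZ_1 \subset \VV_1 = \R(V_1)$, so the condition is immediate. For (A2), I would establish $M_1 \VV_1 = G \VV_2$ by showing the two inclusions piece by piece according to the decompositions $\VV_1 = \WW_1^L + \ZZ_1$ and $\VV_2 = \WW_2^L + \ZZ_2$. The Krylov parts match by Lemma~\ref{lem:subspace}: $M_1 \WW_1^L = G \WW_2^L$ for $s_0>0$, and $M_1\WW_1^L = G \WW_2^{L+1}$ for $s_0=0$ with the extra direction $x_2^L$ sitting in $\ZZ_2$ by design. The $\ZZ_1$ contribution $M_1 \mathrm{span}\{o_1\}$ matches $G\,\mathrm{span}\{o_2\} \subset G\ZZ_2$ because $o_2$ was defined precisely so that $G o_2 = M_1 o_1$. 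Finally, $G\,\N(G) = \{0\}$, so including $\N(G)$ as a summand of $\ZZ_2$ contributes nothing new under $G$ and does not spoil the equality. Assembling these three pieces yields (A2).

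For (A3), the inclusion $\N(G) \subset \R(V_2)$ is immediate because $\N(G)$ is a summand of $\ZZ_2 \subset \VV_2$. The surjectivity $N\,\N(G) = \R(I_3)$ is equivalent, by the usual duality argument, to the injectivity of $[G^\top, N^\top]$, which is exactly the content of assumption (A0). Finally, for the moment matching assertion, I would verify the hypothesis of Lemma~\ref{lem:matching}, namely $\WW^L \subset \R(V)$: the Krylov iterates $r_l = (x_1^l, x_2^l, x_3^l)$ produced by \eqref{eq:iter0a}--\eqref{eq:iterc} lie componentwise in $\WW_1^L \subset \VV_1$, $\WW_2^L \subset \VV_2$, and $\R(I_3) = \VV_3$, so $\WW^L \subset \VV_1 \times \VV_2 \times \VV_3 = \R(V)$, and hence the first $2L$ moments of the reduced transfer function agree with those of the full order model.

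The only delicate point is the asymmetric treatment of the shift cases $s_0>0$ and $s_0=0$ in Lemma~\ref{lem:subspace}, which is precisely the reason that $\ZZ_2$ was augmented by $\mathrm{span}\{x_2^L\}$ when $s_0=0$; once this is accounted for, the proof is a straightforward bookkeeping of the decompositions \eqref{eq:VV}--\eqref{eq:ZZ} and does not rely on any properties of the specific bases chosen for $\VV_1$ and $\VV_2$ other than their spanning the right subspaces.
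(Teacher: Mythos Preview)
Your proposal is correct and follows exactly the route the paper intends: the paper's own proof is the single line ``The assertions follow directly from the construction,'' and your argument is a faithful unpacking of that construction, using Lemma~\ref{lem:subspace} for the Krylov pieces, the defining property $G o_2 = M_1 o_1$ for the $\ZZ_i$ pieces, the equivalence between injectivity of $[G^\top,N^\top]$ and surjectivity of $N|_{\N(G)}$ from (A0), and the componentwise inclusion $\WW^L \subset \R(V)$ for moment matching.
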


The assertions follow directly from the construction.
Let us finally mention an equivalent representation for the space $\VV_2$ 
which will be used in our basis construction algorithm in Section~\ref{sec:algo}.
\begin{lemma} \label{lem:subspace2}
Let $\VV_i$ and $V_i$, $i=1,2$ be defined as in Lemma~\ref{lem:spaces}. Then 
\begin{align*}
\VV_2 = \R(V_2) 
= \R\left(\begin{bmatrix} G \\ N \end{bmatrix}^\dag \begin{bmatrix} M_1 V_1 \\ 0 \end{bmatrix}\right) + \N(G).
\end{align*}
\end{lemma}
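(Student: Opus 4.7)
My plan is to verify both set-inclusions directly, using the explicit representation $\VV_2 = \WW_2^L + \text{span}\{o_2\} + \N(G)$ from the construction \eqref{eq:ZZ} (with an additional summand $\text{span}\{x_2^L\}$ in the case $s_0=0$). The four ingredients I rely on are: (i) $M_1 \WW_1^L = G \WW_2^L$ (respectively $G \WW_2^{L+1}$ if $s_0=0$) from Lemma~\ref{lem:subspace}; (ii) the defining relations $G o_2 = M_1 o_1$ and $N o_2 = 0$; (iii) the fact that every vector in $\WW_2^L$ and every iterate $x_2^l$ lies in $\N(N)$; and (iv) the characterization in Lemma~\ref{lem:pseudo} of $x^\dag(v) := \begin{bmatrix}G\\N\end{bmatrix}^\dag \begin{bmatrix}M_1 v\\0\end{bmatrix}$ as the unique minimum-$M_2$-norm solution of $Gx = M_1 v$, $Nx = 0$.

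For the inclusion $\VV_2 \subset \R(\cdot) + \N(G)$, I would decompose an arbitrary $x_2 \in \VV_2$ as $x_2 = \tilde x + k$ with $\tilde x \in \WW_2^L + \text{span}\{o_2\}$ (plus a multiple of $x_2^L$ if $s_0=0$) and $k \in \N(G)$. By (i)--(iii), the vector $\tilde x$ satisfies $G \tilde x = M_1 v$ for some $v \in \VV_1 = \R(V_1)$ and $N \tilde x = 0$; hence it is \emph{a} solution of the system that defines $x^\dag(v)$, so it differs from the minimum-norm solution only by an element of $\N(G) \cap \N(N) \subset \N(G)$. Absorbing that difference into $k$ exhibits $x_2$ as a member of the right-hand side.

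The reverse inclusion uses the same dictionary in the opposite direction: for $v = w_1 + c\, o_1 \in \R(V_1)$ with $w_1 \in \WW_1^L$, I use (i)--(ii) to produce $\tilde x \in \WW_2^L + \text{span}\{o_2\}$ (extended by $\text{span}\{x_2^L\}$ when $s_0=0$) satisfying $G \tilde x = M_1 v$ and $N \tilde x = 0$. Then $x^\dag(v) - \tilde x \in \N(G) \cap \N(N) \subset \N(G) \subset \VV_2$, so $x^\dag(v) \in \VV_2$, and adding any $k \in \N(G) \subset \VV_2$ keeps the sum in $\VV_2$. By linearity in $v$, this covers the whole range of the pseudo-inverse applied to $\begin{bmatrix}M_1 V_1\\0\end{bmatrix}$.

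The only subtle point is that the minimum-norm solution $x^\dag(v)$ itself need not lie in the explicit set $\WW_2^L + \text{span}\{o_2\}$; the match holds only modulo $\N(G) \cap \N(N)$, and this gap is closed precisely by the $\N(G)$ summand appearing on both sides of the claimed identity. The case $s_0=0$ is where I expect the most bookkeeping care, since the identity $M_1 \WW_1^L = G \WW_2^L$ fails and one must work with $\WW_2^{L+1}$; this is exactly what motivated the inclusion of $\text{span}\{x_2^L\}$ in $\ZZ_2$, and once this is accounted for, the argument above goes through verbatim.
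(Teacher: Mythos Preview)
Your argument is correct and follows exactly the route the paper indicates: its proof is a single sentence pointing to the construction of $\VV_2$ together with Lemmas~\ref{lem:subspace} and~\ref{lem:pseudo}, and you have simply spelled out those details, including the $s_0=0$ bookkeeping, in full.
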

\begin{proof}
The assertion follows by construction of the space $\VV_2$ and Lemmas~\ref{lem:subspace} and \ref{lem:pseudo}. 
\end{proof}

\section{Algorithms for the basis construction} \label{sec:algo}

Following the above considerations, we can now formulate algorithms for the explicit construction of the projection matrices $V_i$. 
We will use {\sc Matlab} notation throughout this section.
Let us start with the construction of the Krylov subspaces, for which we use an Arnoldi method. 

\begin{algorithm}[Krylov iteration] \label{algo:basis} $ $ 
\small
\begin{verbatim}
      % function W=krylov(E,A,B,s0,L,tol)
      r = (s0*E + A)\B;  
      r = ortho(r,[],E,tol);
      W = r;
      for l=1:L-1
          r = (s0*E + A)\(E*r);
          r = ortho(r,W,E,tol);
          W = [W,r];
      end
\end{verbatim}
\end{algorithm}
\noindent
The orthogonalization can be realized efficiently via a modified Gram-Schmidt process with one re-orthogonalization step \cite{GolubVanLoan}. Note that orthogonality is understood with respect to the bilinear form induced by the matrix $E$ here, which also appears in the definition of the energy of the system. 
The corresponding algorithm reads 

\begin{algorithm}[Orthogonalization] \label{algo:ortho} $ $ 
\small
\begin{verbatim}
      % function V=ortho(V,W,E,tol)
      for k = 1:size(V,2)
          % orthogonalize to Wj 
          for r=1:2 % use reorthonormalization
              for j = 1:size(W,2)
                  hk1j = W(:, j)' * E * V(:, k);
                  V(:, k) = V(:, k) - W(:, j) * hk1j; 
              end
          end
          % orthogonalize to previous Vj 
          for r=1:2 
              for j = 1:k-1 
                  if d(j)<tol, continue; end
                  hk1j = V(:, j)' * E * V(:, k);
                  V(:, k) = V(:, k) - V(:, j) * hk1j; 
              end
          end
          % normalize
          d(k) = sqrt(V(:,k)' * E * V(:,k));
          if d(k)>=tol, 
             V(:, k) = V(:, k) / d(k);
          end
      end
      % only keep relevant vectors
      V = V(:,find(d>tol));
\end{verbatim}
\end{algorithm}

The next step consists in the splitting of the matrix $W=[W_1;W_2;W_3]$ corresponding to the solution components $x=[x_1;x_2;x_3]$. Note that even if the columns of $W$ are orthogonal, this will in general no longer be true 
for the columns of $W_i$ and thus some re-orthogonalization is required.
For reasons of numerical stability, we here employ the cosine-sine decomposition
\begin{align*}
\begin{bmatrix} W_1 \\ W_2 \end{bmatrix} = \begin{bmatrix} U_1 & 0 \\ 0 & U_2\end{bmatrix} \begin{bmatrix} C & 0 \\ 0 & S \end{bmatrix} X^\top,
\end{align*}
where $U_1$, $U_2$, and $X$ are orthogonal and $C$, $S$ are diagonal with entries $C_{ii}^2 + S_{ii}^2=1$; this explains the name of the decomposition. 
Note that the cosine-sine decomposition and the related generalized singular value decomposition can be computed efficiently and stably \cite{GolubVanLoan,VanLoan85}.
The following algorithm additionally takes into account non-standard scalar products.
\begin{algorithm}[Stable splitting via cosine-sine decomposition] \label{algo:cs} $ $ 
\small
\begin{verbatim}
      % function [W1,W2]=split(W1,W2,M1,M2,tol)
      % compute cholesky factorizations Mi=Ri*Ri'
      R1 = chol(M1); R2 = chol(M2);

      % compute generalized svd 
      [U1,U2,X,C,S] = gsvd(R1*W1,R2*W2);

      % eliminate dependent columns 
      kc = find(diag(C)>tol); ks = find(diag(S)>tol);
      W1 = R1\U1(:,kc);       W2 = R2\U2(:,ks);
\end{verbatim}
\end{algorithm}
For the problems under investigation, the splitting via the cosine-sine decomposition does not cause a substantial computational overhead but significantly improves the stability compared to the simple splitting $W=[W_1;W_2]$ with subsequent re-orthogonalization; see Section~\ref{sec:cs} for an illustration by numerical tests.

\medskip 

As a final step in the basis construction process, we now apply the modifications to 
ensure the algebraic compatibility conditions (A1)--(A3) which finally allow us to guarantee the properties (P1)--(P4) also for the reduced models. For this purpose, we use the following implementation.
\begin{algorithm}[Modifications] \label{algo:mod} $ $ 
\small
\begin{verbatim}
      % function [V1,V2]=modify(W1,W2,M1,M2,o1,nullG,tol)
      V1 = ortho([W1,o1],[],M1,tol);
      V2 = M2\([G',N']*(([G;N]*(M2\[G',N']))\[M1*W1;zeros(size(N,1),size(V1,2))])); 
      V2 = ortho([nullG,V2],[],M2,tol);
\end{verbatim}
\end{algorithm}
Note that the matrix $V_2$ was defined here following the considerations of Lemma~\ref{lem:subspace2}. This again does not significantly increase the overall complexity but substantially improves validity of the compatibility condition (A2) in the presence of round-off errors.

\subsection*{Summary}

The previous considerations allow us to draw the following important conclusions which describe 
the basic properties of our model reduction approach. 
\begin{theorem} \label{thm:main}
Let (A0)--(A3) be valid and let the projection matrices $V_i$ be defined with the Algorithms~\ref{algo:basis}--\ref{algo:mod}.
Then the reduced order model \eqref{eq:red1}--\eqref{eq:red3} is well-posed, conserves mass, dissipates energy,  
and has exponentially stable steady states, i.e., it satisfies (P1)--(P4).
\end{theorem}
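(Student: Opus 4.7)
The plan is to verify each of the four properties (P1)--(P4) by first reducing them to structural properties of the reduced system matrices and then invoking the structural compatibility established earlier in the paper. Since Lemma~\ref{lem:spaces} asserts that the projection matrices $V_1,V_2$ produced by Algorithms~\ref{algo:basis}--\ref{algo:mod} span subspaces satisfying (A1)--(A3), and since under (A0) together with (A2)--(A3) Lemma~\ref{lem:A0h} yields ($\widehat{\text{A0}}$), the reduced system matrices inherit exactly the algebraic structure of the full order model. Well-posedness of the reduced DAE \eqref{eq:red1}--\eqref{eq:red3} is then an immediate consequence of Lemma~\ref{lem:pencil_red}: the reduced pencil $s\widehat E+\widehat A$ is regular for every $s\ge 0$, the index is two, and $\widehat A$ itself is invertible, which already delivers (P4), namely the existence of unique steady states for any constant input.

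For mass conservation (P1), I would use (A1) to pick the (unique, since $V_1$ is injective) vector $\widehat o_1$ with $V_1\widehat o_1 = o_1$, and set $\widehat m_h = \widehat o_1^\top \widehat M_1 z_1 = o_1^\top M_1 V_1 z_1$. Differentiating and substituting \eqref{eq:red1} gives
\begin{align*}
\frac{d}{dt}\widehat m_h = -\widehat o_1^\top \widehat G z_2 = -o_1^\top G (V_2 z_2).
\end{align*}
The right-hand side has precisely the form of the corresponding expression for the full-order model; the identification with the boundary port output, used in the derivation of (P1) in Section~2 and in the appendix, then carries over verbatim to yield $\tfrac{d}{dt}\widehat m_h = \sum_{v\in\Vb}\widehat y_v$. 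Energy dissipation (P2) is essentially free: as recalled in Remark~\ref{lem:red}, Galerkin projection of the port-Hamiltonian descriptor form \eqref{eq:descriptor} preserves the decomposition $\widehat A = V^\top J V + V^\top R V$, with $V^\top E V$ symmetric positive semi-definite, so a direct bilinear-form computation yields $\tfrac{d}{dt}\widehat E_h = -z_2^\top \widehat D z_2 + \widehat y^\top u$.

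The remaining and genuinely delicate point is exponential stability (P3) with a rate $\gamma$ and constant $C$ that do not degenerate as the dimensions of the Krylov subspaces grow. For this I would step outside the purely algebraic setting and pass to the function space picture indicated in Figure~\ref{fig:sketch}: the bases produced by the algorithm can be read as Galerkin trial functions in the infinite-dimensional mixed formulation, and the compatibility conditions (A1)--(A3) then correspond to their function-space analogues (A1$_H$)--(A3$_H$). The exponential decay estimate for the continuous problem in the appendix (see \cite{EggerKugler16b}) hinges on an energy--mass inequality of observability type together with surjectivity of $G$ and of the restriction of $N$ to $\N(G)$; these surjectivity properties are precisely what (A2)--(A3), respectively ($\widehat{\text{A0}}$), encode at the reduced level. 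Consequently the same argument, applied with the reduced matrices, yields an exponential decay estimate with constants depending only on the data of the continuous problem and on the inf--sup constants preserved by (A2)--(A3), hence uniform in the reduction dimension. The main obstacle I expect is exactly this uniformity in (P3): it requires the functional-analytic interpretation of the subspaces and the observation that $\VV_2$ contains $\N(G)$, so that the reduced problem admits the same splitting into conservative and dissipative parts that drives the decay estimate at the continuous level.
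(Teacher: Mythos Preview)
Your proposal is correct and follows essentially the same route as the paper: well-posedness and (P4) via Lemma~\ref{lem:pencil_red}, (P2) from the port-Hamiltonian structure of the Galerkin projection, (P1) from (A1) and the identity $o_1^\top G = \hat o^\top B_2^\top$ inherited from the function-space level, and uniform (P3) by interpreting the reduced spaces as Galerkin subspaces satisfying (A1$_H$)--(A3$_H$) and invoking the appendix estimates. The only nuance you do not make explicit is the paper's remark that a \emph{non-uniform} version of (P3) already follows from (P2) together with ($\widehat{\text{A0}}$) by a finite-dimensional LaSalle/linearity argument, with the function-space detour needed only for the uniformity of $C,\gamma$; but since you go straight for the uniform statement, nothing is missing.
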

\begin{proof}
Well-posedness and the existence of unique steady states and thus property (P4) follow from Lemma~\ref{lem:pencil_red}.
Validity of (P2) is a consequence of the structure-preserving Galerkin projection. The proof of  (P1) follows by construction and (P3) can be deduced from (P2) and (P4). 
The proof of uniform exponential stability (P3) and its independence of the intermediate full order model, however, requires a detailed analysis of the underlying partial differential-algebraic model and its Galerkin approximations which will be presented in the appendix.
\end{proof}

\section*{Part II: Numerical illustration}

In the following two sections, we first demonstrate the importance of the algebraic compatibility conditions and then illustrate our main results with some numerical tests.

\section{Comparison of reduced models} \label{sec:comparison}

In all our experiments, the full order model \eqref{eq:lti1}--\eqref{eq:lti3} is obtained by the mixed finite element discretization of the system \eqref{eq:sys1}--\eqref{eq:sys6}, as discussed in Section~\ref{sec:galerkin} and, therefore, the conditions (A0) are valid. 
Implicit Runge-Kutta methods are used for integration in time, and the time step $\tau$ is chosen sufficiently small in order to minimize the errors due to the time discretization. 
Quantities $x^k$ will denote the approximations for $x(t^k)$ at time $t^k = k\tau$. 

\subsection{Test problems}

We start with some considerations for the most simple networks consisting of a single pipe of unit length and the same pipe split into two parts. The model parameters are set to $a^e=b^e=d^e=1$. 
\begin{figure}[ht!]
\begin{minipage}[c]{.7\textwidth}
\begin{center}
\begin{tikzpicture}[scale=2]
\node[circle,draw,inner sep=2pt] (v1a) at (0,0) {$v_1$};
\node[circle,draw,inner sep=2pt] (v2a) at (2,0) {$v_2$};
\draw[->,thick,line width=1.5pt] (v1a) -- node[above] {$e_1$} ++(v2a);

\node[circle,draw,inner sep=2pt] (v1b) at (3,0) {$v_1$};
\node[circle,draw,inner sep=2pt] (v2b) at (5,0) {$v_2$};
\node[circle,draw,inner sep=2pt] (v3b) at (4,0) {$v_3$};
\draw[->,thick,line width=1.5pt] (v1b) -- node[above] {$e_1$} ++(v3b);
\draw[->,thick,line width=1.5pt] (v3b) -- node[above,sloped] {$e_2$} ++(v2b);
\end{tikzpicture}
\end{center}
\end{minipage}
\caption{Network topologies for the single pipe (TP1) and the double pipe (TP2).\label{fig:pipe}} 
\end{figure}
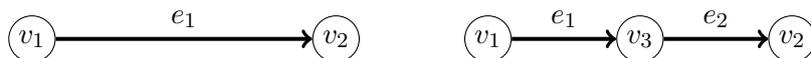
These test cases are already sufficient to illustrate the necessity of the
compatibility conditions (A1)-(A3) for the well-posedness of the reduced models and the validity of properties (P1)--(P4). 
We will compare the reduced models based on Krylov subspaces with and without modifications and refer to these as 
\begin{itemize}\itemsep1ex
 \item {the standard (reduced) model:} $\VV_1 = \WW_1^L$, $\VV_2=\WW_2^L$; and  
 \item {the improved (reduced) model:} $\VV_1 = \WW_1^L + \ZZ_1$, $\VV_2=\WW_2^L + \ZZ_2$;
\end{itemize}
respectively. 
If not stated otherwise, we always set the shift parameter to $s_0=0$. 
Moreover, we only utilize one single input for the subspace construction at the boundary vertex $v_1$ on the left side of the pipe. In this case $B_2$ consists of a single column.

\subsection{Reduction to an ordinary differential equation} \label{sec:ode}

For the test case (TP1) of a single pipe, the coupling matrix $N$ and the Lagrange parameter $x_3$ have zero dimension. Therefore, the model \eqref{eq:lti1}--\eqref{eq:lti3} is just a system of ordinary differential equations and the reduced model \eqref{eq:red1}--\eqref{eq:red2} is well-posed for any choice of projection matrices $V_1$ and $V_2$ having full rank.

This is, however, no longer true for the case (TP2) of two pipes, where $N$ and $x_3$ have one row. 
Well-posedness of the full-order system \eqref{eq:lti1}--\eqref{eq:lti3} and of the improved reduced model based on subspaces $\VV_i = \WW_i + \ZZ_i$ is still ensured by Lemmas~\ref{lem:krylov} and \ref{lem:spaces}.
For the standard reduced model based on subspaces $\VV_i = \WW_i$, on the other hand, we deduce from Lemma~\ref{lem:subspace} that 
$\widehat N = N V_2 = 0$. Hence (A3) does not hold if $\dim \VV_3 \ne 0$, and the matrix pencil $s\widehat E+\widehat A$ for the reduced model is singular. Consequently, the linear system \eqref{eq:red1}--\eqref{eq:red3} is not well-posed. 
The condition $N V_2 = 0$ can, however, be used to eliminate the Lagrange multiplier $z_3$ and to obtain the smaller system of ordinary differential equations
\begin{alignat}{5}
\widehat M_1 \dot z_1 \ & & \            \ &+& \ \widehat G x_2 \  &= \ 0, \label{eq:red1b}\\
\widehat M_2 \dot z_2 \ &-& \ \widehat G^\top z_1 \ &+& \ \widehat D z_2 \ &= \ \widehat B_2 u. \label{eq:red2b}
\end{alignat}
Whenever the standard reduced model is used in the following, we will tacitly eliminate the Lagrange multiplier and the constraints in this way.
Since the matrices $\widehat M_i$ are regular, this problem is clearly well-posed. 
Note, however, that the Lagrange multiplier $z_3$ cannot be recovered uniquely unless assumption (A3) is satisfied. 
Also, a potential input in the third equation \eqref{eq:red3}, which might occur in more general situations, cannot be handled appropriately unless this compatibility condition is valid.

\subsection{Choice of initial conditions}

Let us briefly discuss the choice of initial conditions for the reduced models. 
By projection with respect to the energy scalar products, we obtain 
\begin{align*}
z_1 = \widehat M_1^{-1} V_1^\top M_1 x_1 
\qquad \text{and} \qquad 
z_2 = \widehat M_2^{-1}V_2^\top M_2 x_2.
\end{align*}
This choice provides the best approximation of the initial conditions with respect to the energy of the problem, and the discrete energy
$E_h = \frac{1}{2} \left( x_1^\top M_1 x_1 + x_2^\top M_2 x_2\right)$ at initial time
is approximated as good as possible by the energy of the reduced model, which is obtained by replacing $x_i=V_i z_i$. Moreover, the initial energy is not increased by the projection step.
The total mass of the full order system is defined as $m_h = o_1^\top M_1 x_1$, and that of the reduced models is obtained by replacing $x_1 = V_1 z_1$ again. 

With the above choice of initial conditions, the improved reduced model will exactly reproduce the initial mass. 
For the standard reduced model, we can in general not assume that $o_1 \in \VV_1$, 
which may lead to a rather large defect in the initial mass.
As a remedy, one may enforce the correct representation of the initial mass by a constraint which, however, leads to a potential increase in the initial energy. 
In Table~\ref{tab:mass}, we display the values for the total mass obtained for initial values $p_0=1$ and $q_0=0$ with the reduced models using these two strategies.

\begin{table}[ht!]
\centering
\begin{tabular}{l|c||c||c|c|c||c|c|c}
      & & exact & \multicolumn{3}{|c||}{$\VV_i = \WW_i^L$}   & \multicolumn{3}{c}{$\VV_i = \WW_i^L + \ZZ_i$} \\
\hline
                  & $L$   &                 & $1$     & $3$     & $10$    & $1$     & $3$     & $10$    \\
\hline
projection        & $m_h$ & $1.000$ & $0.750$ & $0.902$ & $0.949$ & $1.000$ & $1.000$ & $1.000$\\ 
                  & $E_h$ & $0.500$ & $0.375$ & $0.451$ & $0.475$ & $0.500$ & $0.500$ & $0.500$\\  
\hline
mass constraint   & $m_h$ & $1.000$ & $1.000$ & $1.000$ & $1.000$ & $1.000$ & $1.000$ & $1.000$\\ 
                  & $E_h$ & $0.500$ & $0.667$ & $0.554$ & $0.527$ & $0.500$ & $0.500$ & $0.500$
\end{tabular}
\medskip
\caption{Initial values of $m_h(0)$ and $E_h(0)$ for the mass and energy for full and the reduced order models obtained by projection in the energy norm with and without additional mass constraint. Only the left input $u_1$ at vertex $v_1$ was used in the Krylov iteration and the shift parameter was set to $s_0=0$.\label{tab:mass}}
\end{table}

For the standard reduced model based on spaces $\VV_i = \WW_i^L$, we observe a substantial miss-specification of the total mass at initial time if the initial conditions are chosen by the energy projection. Exact representation of the total mass via the constraint, on the other hand, leads to an artificial increase of the initial energy. 
The size of both defects can be reduced by increasing the approximation order $L$
which allows to approximate the initial conditions better and better.
The improved reduced model, on the other hand, satisfies (A1) by construction and therefore leads to 
the exact representation of the mass and a good approximation of the energy at the same time. For the problem under investigation, the energy can even be represented exactly.

\subsection{Conservation of energy}

The port-Hamiltonian structure of the reduced order system \eqref{eq:lti1}--\eqref{eq:lti3} 
automatically leads to exact conservation of the sum of total and dissipated energy 
\begin{align*}
E_h(t) + \int_0^t D_h(s) ds = E_h(0) + \int_0^t y_h(s)^\top u(s) ds.
\end{align*}
Here $E_h=\frac{1}{2}(x_1^\top M_1 x_1 + x_2^\top M_2 x_2)$ is the total energy and $D_h = x_2^\top D x_2$ the dissipation term.
This holds true for the standard and the improved reduced model.
An additional numerical dissipation term arises if a stable implicit Runge-Kutta scheme is used for integration in time. If the time step size $\tau$ is chosen sufficiently small, the effect of this artificial numerical dissipation can however be mad arbitrarily small and thus be neglected;
more details will be given below. 

\subsection{Exponential stability}

We next consider the influence of the basis construction on the exponential stability
of the reduced models. We again set $p_0=1$ and $q_0=0$ and choose the inputs in the boundary conditions as $u_1=0$ and $u_0=1$ for $t>0$.
In Table~\ref{tab:energy}, we display the values of the energy for the full order and the reduced models for a sequence of time steps.

\begin{table}[ht!]
\centering
\begin{tabular}{l|c||c|c|c||c|c|c}
                  & exact & \multicolumn{3}{|c||}{$\VV_i = \WW_i^L$}   & \multicolumn{3}{c}{$\VV_i = \WW_i^L + \ZZ_i$} \\
\hline
 $t \setminus L$  
    &         & $1$     & $3$     & $10$    & $1$     & $3$     & $10$    \\
\hline
$0$ & $0.5000$ & $0.3750$ & $0.4512$ & $0.4745$ & $0.5000$ & $0.5000$ & $0.5000$\\ 
$1$ & $0.1528$ & $0.3750$ & $0.1665$ & $0.1514$ & $0.1876$ & $0.1552$ & $0.1527$\\ 
$2$ & $0.0512$ & $0.3750$ & $0.0708$ & $0.0509$ & $0.0690$ & $0.0511$ & $0.0511$\\ 
$3$ & $0.0174$ & $0.3750$ & $0.0384$ & $0.0173$ & $0.0245$ & $0.0173$ & $0.0174$\\ 
$4$ & $0.0059$ & $0.3750$ & $0.0273$ & $0.0059$ & $0.0084$ & $0.0059$ & $0.0059$\\ 
\end{tabular}
\medskip
\caption{Energy decay $E_h(t)$ for the full order model, the standared reduced model $\WW_i=\VV_i$, and the improved reduced model $\WW_i=\VV_i+\ZZ_i$. Only the left input $u_1$ and a shift parameter $s_0=0$ is used for the Krylov iteration.\label{tab:energy}}
\end{table}

The improved reduced model yields uniform exponential decay of the energy in all cases. Already for $L=3$, the energy is predicted accurately over the whole time interval. The standard reduced model, on the other hand, underestimates the initial energy and does not provide the correct decay rate for 
small $L$. For the smallest model with $L=1$, we do not even observe any decay in energy at all. 
As we will see next, this defect may occur for any number of moments.

\subsection{Existence of steady states}

Consider the standard reduced model: 
As shown in Section~\ref{sec:basic_red}, the unique solvability of the stationary problem requires $[\widehat G^\top,\widehat N^\top]$ to be injective. For the case (TP1) of a single pipe, the matrix $N$ has dimension zero since no inner vertex exists. Even in this case, according to Lemma~\ref{lem:subspace}, surjectivity of $\widehat G$ and thus injectivity of $\widehat G^\top$ can in general only be guaranteed for shift parameter $s_0>0$. For $s_0=0$, we expect to observe a rank deficiency in the standard reduced model and thus irregularity of the stationary problem. 

To see this, we take $s_0=0$ and let the initial values for the reduced problem be given by $z^0=(z_1^0,z_2^0,0)$ with $z_0 \in \N(\widehat A)$ and $\widehat A$ denoting the system matrix of the reduced model. Then the solution of the reduced system reads $z(t)=z^0$ for all $t \ge 0$, i.e., the damping is completely ineffective and no energy decay takes place. This can already be observed in Table~\ref{tab:energy} for $L=1$.
Note that a shift parameter $s_0=0$ is the typical choice if one is interested in the long term behavior. The standard reduced model is therefore not exponentially stable for this important case 
and existence of unique steady states cannot be guaranteed.

\section{Numerical tests for the improved reduced model} \label{sec:num}

We now illustrate in more detail the stability and approximation properties of the 
improved reduced models obtained with the algorithms proposed in the previous sections. 

\subsection{Mesh independence}

As a first example, we consider again the single pipe (TP1). 
The reduced models are generated for a single input $u_1$ at the left vertex $v_1$ 
and we set $s_0=0$.
In Figure~\ref{fig:basis}, we display the basis functions obtained for $L=4$
Krylov iterations. 

\begin{figure}[ht!]
\includegraphics[width=0.4\textwidth]{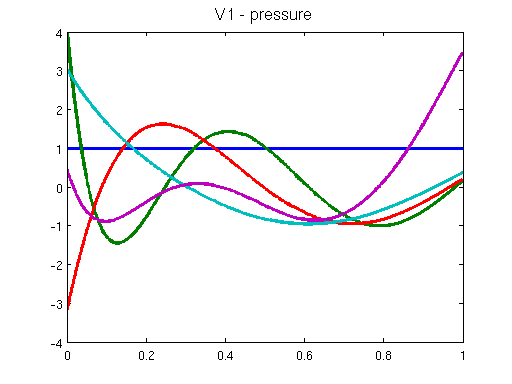}
\includegraphics[width=0.4\textwidth]{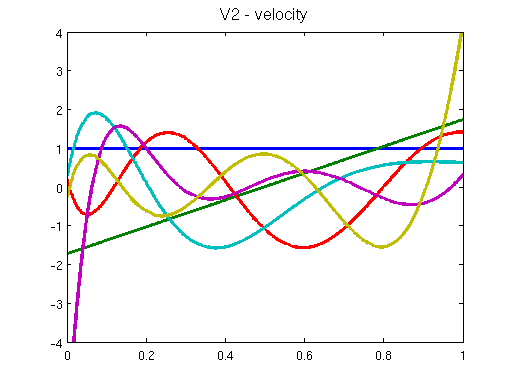}
 \caption{Bases for the subspaces $\VV_1=\WW_1+\ZZ_1$ and $\VV_2=\WW_2+\ZZ_2$ obtained by $L=4$ Krylov iterations and the modifications outlined above. 
The resulting dimensions are  $\dim(\VV_1)=5$ and $\dim(\VV_2)=6$ here.\label{fig:basis}}
\end{figure}

As explained in detail in Appendix~\ref{sec:galerkin}, the columns of the matrices $V_1$ and $V_2$ form orthogonal bases for the subspaces $\VV_1$ and $\VV_2$ which can be interpreted as functions on the interval $[0,1]$. In fact, any single Krylov iteration corresponds to the solution of an elliptic boundary value problem which explains why the basis functions are smooth. 
Also note that the functions look similar to a sequence of orthogonal polynomials of increasing degree.  
This indicates that the projection onto the Krylov subspaces leads to some sort of higher order approximation.

In the formulation of our algorithms, we payed special attention to a construction that respects the underlying function space setting. As a consequence, the subspaces $\VV_i$ and even the corresponding bases turn out to be almost independent of the underlying full order model. To illustrate this fact, we display in Figure~\ref{fig:basis-ref} one of the basis functions for velocity and pressure computed with full order models resulting from discretization on different meshes. 

\begin{figure}[ht!]
\includegraphics[width=0.4\textwidth]{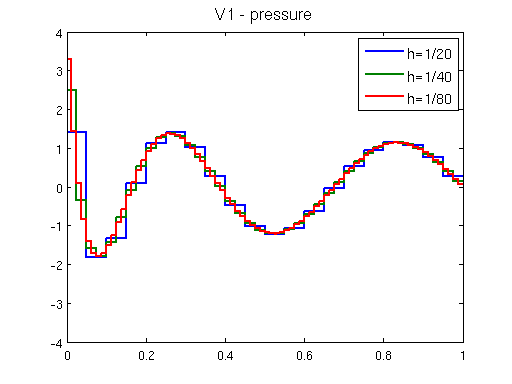}
\includegraphics[width=0.4\textwidth]{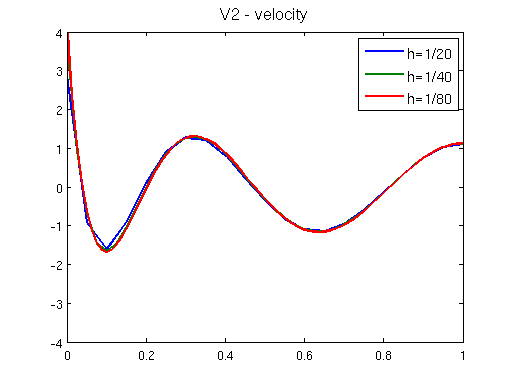}
 \caption{Basis functions for pressure and velocity
 computed with the same Krylov iteration but using different full order models obtained by finite element discretization with mesh sizes $h=\frac{1}{20}$, $\frac{1}{40}$, and $\frac{1}{80}$.\label{fig:basis-ref}}
\end{figure}

Note that the basis functions for different levels coincide almost perfectly, up to discretization errors. This clearly demonstrates the mesh independence of the proposed algorithms. 
As mentioned before, all algorithms could even be formulated directly for the infinite dimensional problem and the basis functions depicted in Figure~\ref{fig:basis-ref} thus correspond to approximations for the corresponding functions that would be obtained by the Krylov iteration in infinite dimensions.

\subsection{Stability of the splitting step} \label{sec:cs}

In our basis construction algorithms, we used the cosine-sine decomposition in order to improve the numerical stability of the splitting $W=[W_1;W_2]$. 
A simple splitting with re-orthogonalization of $W_1$ and $W_2$, on the other hand, could be realized as follows.
\begin{verbatim}
      % function [W1,W2]=simplesplit(W1,W2,M1,M2,tol)
      W1 = ortho(W1,[],M1,tol);
      W2 = ortho(W2,[],M1,tol);
\end{verbatim}
In Figure~\ref{fig:split}, we compare the results obtained by splitting the Krylov bases by this simple strategy with those obtained by means of the cosine-sine decomposition.

\begin{figure}[ht!]
\includegraphics[width=0.4\textwidth]{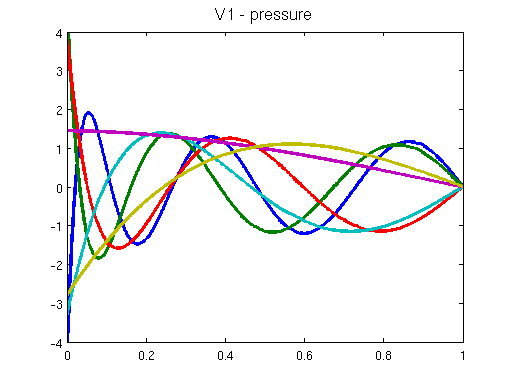}
\includegraphics[width=0.4\textwidth]{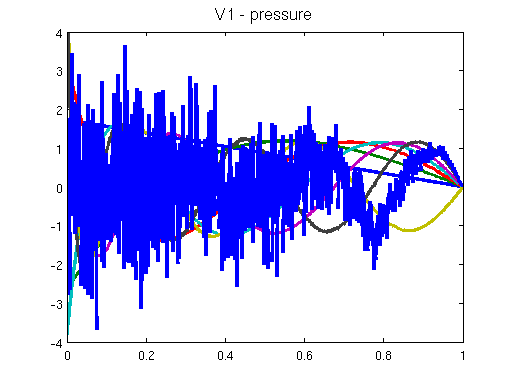}
 \caption{Basis functions for the pressure space $\WW_1^L$
 obtained after $L=10$ Krylov iterations and splitting with the cosine-sine decomposition (left) respectively the simple splitting and re-orthogonalization (right).\label{fig:split}}
\end{figure}

Due to the possibility of interpreting the basis vectors as functions on the interval $[0,1]$, one can easily conclude that,  already for relatively small dimensions, the standard splitting suffers from severe numerical instabilities. Let us emphasize that this is caused only by the instability of the splitting step and not by the Arnoldi iteration defining the Krylov spaces. The splitting via cosine-sine decomposition, on the other hand, does not suffer from these instabilities and should therefore always be preferred in practice.

\subsection{Approximation of the input-output behavior}

By Lemma~\ref{lem:matching}, we know that the reduced models obtained with our algorithms exactly match the first few moments of the transfer function. This leads to a good overall approximation of the input-output behavior in the frequency domain.
With the following tests, we would like to take a closer look also at the approximation in the time domain. 
For this purpose, we repeat the computations for a single pipe with input functions now given by
\begin{align} \label{eq:input}
u_1(t)= \begin{cases} t, & 0 \le t<1, \\ 2-t, & 1 \le t < 2, \\ 0, & t\ge 2, \end{cases}
\qquad \text{and} \qquad 
u_2(t)=0.
\end{align}
The initial values are set to $p_0=0$ and $q_0=0$ and we now use the inputs $u_1$ and $u_2$ at both pipe ends to construct the reduced models. The shift parameter is again set to $s_0=0$.
For integration of the system in time, we utilize a $\theta$-scheme with uniform time step $\tau$. For $\theta=1$ we obtain the implicit Euler method and for $\theta=\frac{1}{2}+\tau$, the scheme is second order in time. In both cases, the exponential decay is preserved \cite{EggerKugler15}. 

\begin{figure}[ht!]
\includegraphics[width=0.32\textwidth]{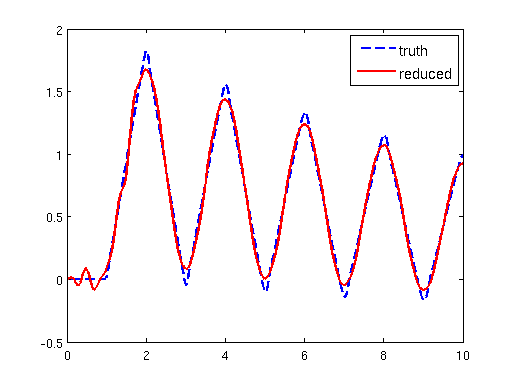} \hspace*{-1.5em}
\includegraphics[width=0.32\textwidth]{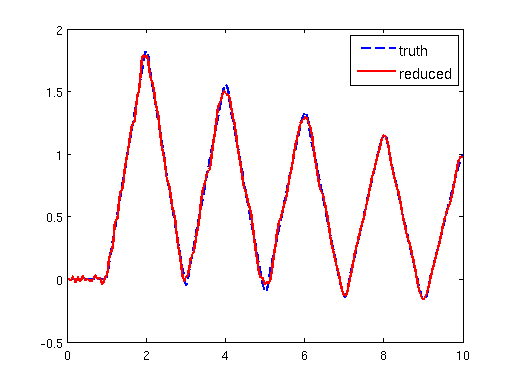} \hspace*{-1.5em}
\includegraphics[width=0.32\textwidth]{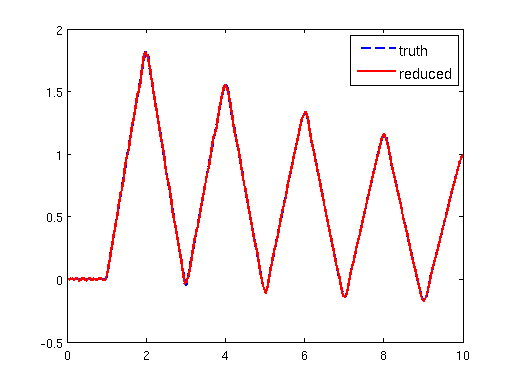}\\
\includegraphics[width=0.32\textwidth]{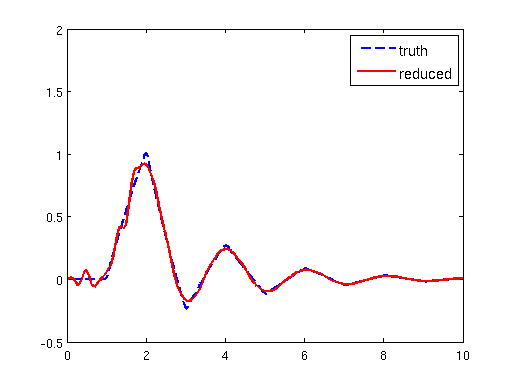} \hspace*{-1.5em}
\includegraphics[width=0.32\textwidth]{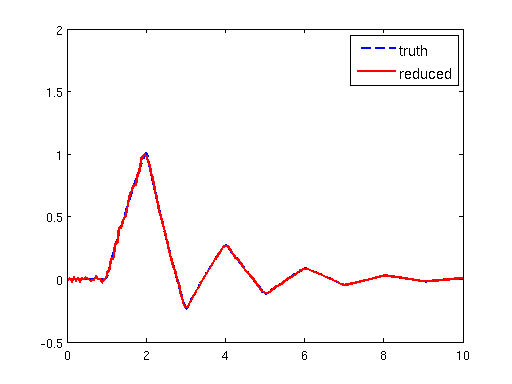} \hspace*{-1.5em}
\includegraphics[width=0.32\textwidth]{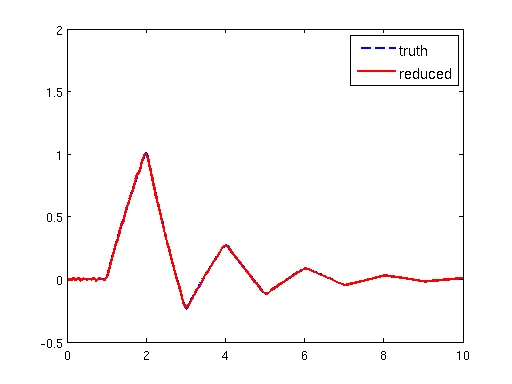}\\
\includegraphics[width=0.32\textwidth]{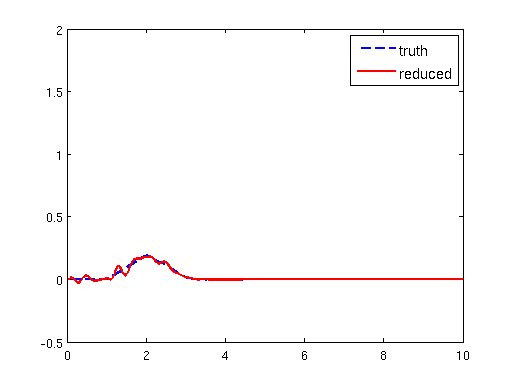} \hspace*{-1.5em}
\includegraphics[width=0.32\textwidth]{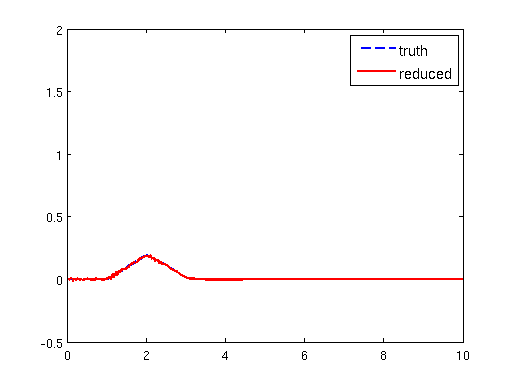} \hspace*{-1.5em}
\includegraphics[width=0.32\textwidth]{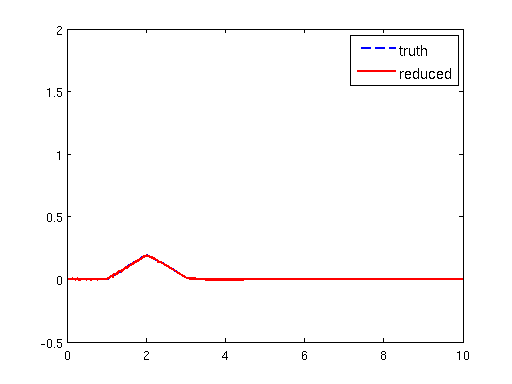}\\
\caption{Mass-flux $q(v_2)n(v_2)=-y_2$ at the right boundary for the single pipe (TP1) with input \eqref{eq:input}. Results are displayed for the full order model (blue) and reduced order models (red) with $L=2,5,10$ (left to right) and for damping parameters $d=0.1,1,5$ (top to bottom).\label{fig:impulse}}
\end{figure}

As can be seen from Figure~\ref{fig:impulse}, an approximation with only a few moments already leads to a very accurate representation of the input-output behavior in time domain. The oscillations in the initial phase of the output are due to a Gibbs phenomenon. This effect, however, becomes negligible when increasing the dimension of the reduced models. 
The correct propagation speed and damping of the signal are obtained for all models with $L \ge 5$. For the reduced model with $L=10$ Krylov iterations, which corresponds to $\dim(V_1)=14$ and $\dim(V_2)=15$ here, the prediction of the output is almost perfect. 
The plots displayed in Figure~\ref{fig:impulse} also illustrate the exponential decay of the output which becomes faster when  the damping is increased.

\subsection{Results for a small network}

As a final test case, let us now demonstrate that very similar results can also be obtained on more complicated situations. 
For this purpose we repeat the previous tests for the network depicted in Figure~\ref{fig:network}.
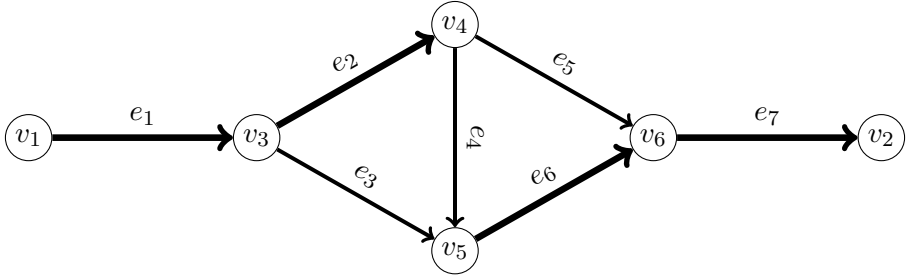
\begin{figure}[!ht]
\begin{center}
  \begin{tikzpicture}[scale=3]
  \node[circle,draw,inner sep=2pt] (v1) at (-1.87,0) {$v_1$};
  \node[circle,draw,inner sep=2pt] (v2) at (-0.87,0) {$v_3$};
  \node[circle,draw,inner sep=2pt] (v3) at (0,0.5) {$v_4$};
  \node[circle,draw,inner sep=2pt] (v4) at (0,-0.5) {$v_5$};
  \node[circle,draw,inner sep=2pt] (v5) at (0.87,0) {$v_6$};
  \node[circle,draw,inner sep=2pt] (v6) at (1.87,0) {$v_2$};
  \draw[->,thick,line width=2.5pt] (v1) -- node[above,sloped] {$e_1$} ++ (v2);
  \draw[->,thick,line width=2.5pt] (v2) -- node[above,sloped] {$e_2$} ++ (v3);
  \draw[->,thick,line width=1.5pt] (v2) -- node[above,sloped] {$e_3$} ++ (v4);
  \draw[->,thick,line width=1.5pt] (v3) -- node[above,sloped] {$e_4$} ++ (v4);
  \draw[->,thick,line width=1.5pt] (v3) -- node[above,sloped] {$e_5$} ++ (v5);
  \draw[->,thick,line width=2.5pt] (v4) -- node[above,sloped] {$e_6$} ++ (v5);
  \draw[->,thick,line width=2.5pt] (v5) -- node[above,sloped] {$e_7$} ++ (v6);
  \end{tikzpicture}
  \end{center}
\caption{Topology used for numerical tests on a network. The thickness of the edges corresponds to diameter of pipes. As before, input and output of the system occurs via the vertices $v_1,v_2$ which denote the ports of the systems.\label{fig:network}}
\end{figure}
All pipes are chosen to be of unit length $l_e=1$ and the model parameters are set constant along every pipe with values
\begin{align*}
 &a=\begin{bmatrix}4& 4& 1& 1& 1& 4& 4\end{bmatrix},\qquad b=\begin{bmatrix}1/4& 1/4& 1& 1& 1& 1/4& 1/4\end{bmatrix}\\
&\text{and} \qquad d=d_0\cdot\begin{bmatrix}1/8& 1/8& 1& 1& 1& 1/8& 1/8\end{bmatrix}.
\end{align*}
Here $d_0$ is some positive constant that allows us to vary the damping in the whole system by a single factor. 
These parameters correspond to pipes of different cross-sections; cf. Figure~\ref{fig:network}.

We now repeat the test of the previous section with input defined by \eqref{eq:input}. Since the overall system is substantially larger here, we increase the time horizon by a factor four. As before, we specify a pressure profile at the vertex $v_1$ as input and consider as output the resulting mass flux at vertex $v_2$ where the pressure is kept at zero. 
The results are depicted in Figure~\ref{fig:impulse2}.

\begin{figure}[ht!]
\includegraphics[width=0.32\textwidth]{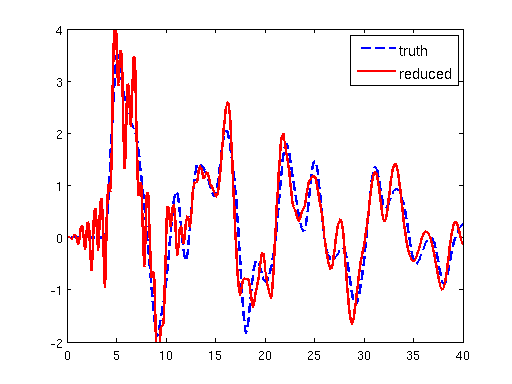} \hspace*{-1.5em}
\includegraphics[width=0.32\textwidth]{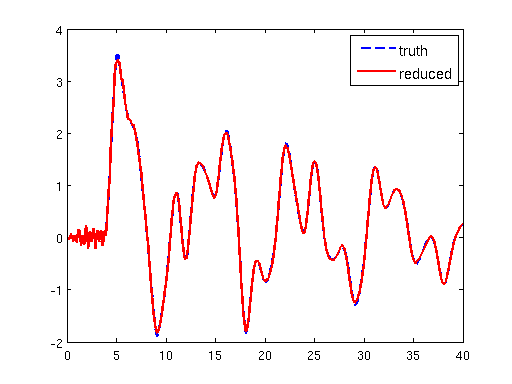} \hspace*{-1.5em}
\includegraphics[width=0.32\textwidth]{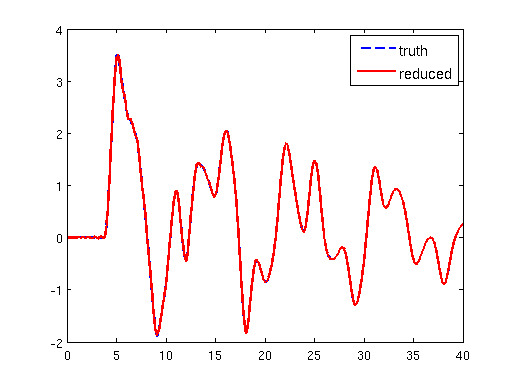}\\
\includegraphics[width=0.32\textwidth]{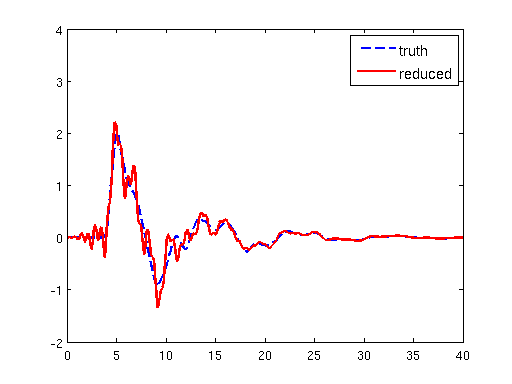} \hspace*{-1.5em}
\includegraphics[width=0.32\textwidth]{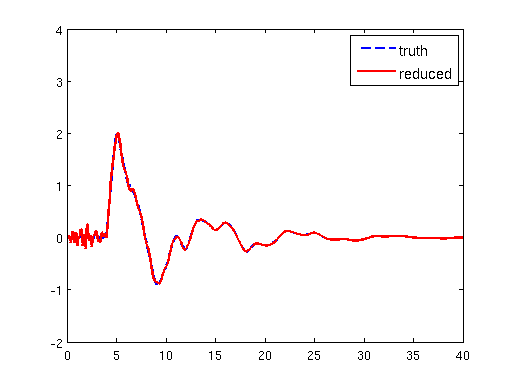} \hspace*{-1.5em}
\includegraphics[width=0.32\textwidth]{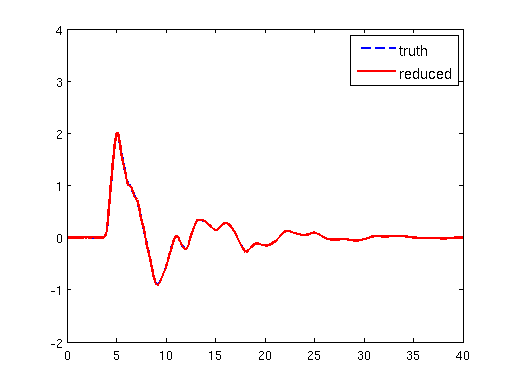}\\
\includegraphics[width=0.32\textwidth]{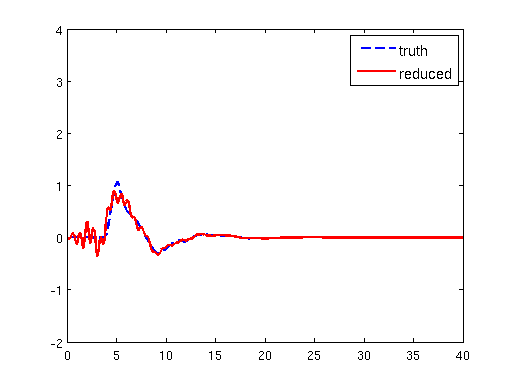} \hspace*{-1.5em}
\includegraphics[width=0.32\textwidth]{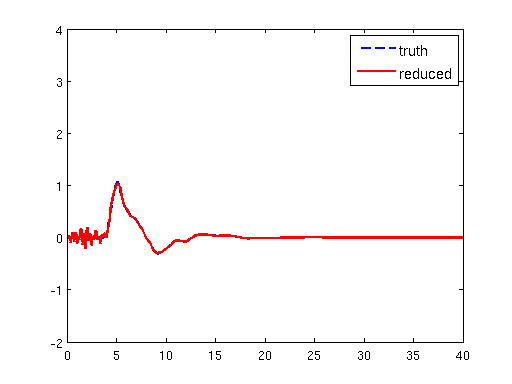} \hspace*{-1.5em}
\includegraphics[width=0.32\textwidth]{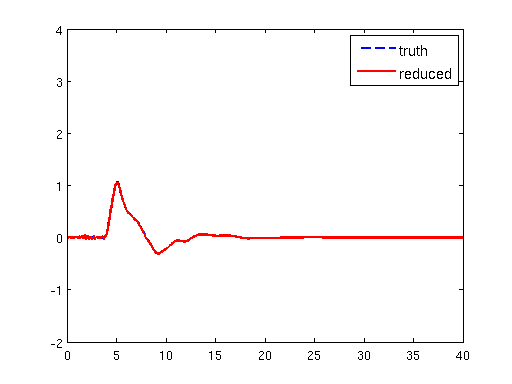}\\
\caption{Mass-flux $q(v_2)n(v_2)=-y_2$ over the right boundary for the network test problem. Results are displayed for the full order model (blue) and the reduced order models (red) with $L=5,10,20$ (left to right) and for damping parameters $d_0=0.1,0.5,1$ (top to bottom).\label{fig:impulse2}}
\end{figure}

In comparison to the example with a single pipe, the output function now has a much more complicated structure which is due to multiple pathways through the network and possible reflections at the junctions. Again we observe some Gibbs phenomena for 
approximations with only a few moments. For $L=20$, which here corresponds to $\dim(V_1)=40$ and $\dim(V_2)=47$, we already observe an almost perfect prediction of the input-output behavior. Note that this reduced model amounts to only about $6$ and $7$ degrees of freedom per pipe for pressure and velocity, respectively, which is in good agreement with the experiments for the single pipe.

\section{Discussion} \label{sec:discussion}

Let us briefly summarize the observations made in this paper.
Structure-preserving model reduction, as considered for instance in \cite{BeattieGugercin09,Freund05,GugercinPolyugaRostyslavBeattieVanDerSchaft12,MehS05,PolyugaVanDerSchaft10}, 
is in principle well suited for the systematic approximation of system of differential-algebraic equations that arise
by discretizations of partial differential-algebraic systems modeling of wave propagation phenomena on networks. 
A proper discretization and subsequent projection onto subspaces allows to preserve the underlying port-Hamiltonian structure and to guarantee passivity of the reduced models. 
Other important properties, like conservation of mass or exponential stability, are however not inherited automatically.
In order to preserve also these properties, some problem specific modifications are required in the subspace construction process. The formulation of appropriate modifications may require a detailed analysis of the underlying mathematical models in infinite dimensions and a detailed understanding of the overall discretization process
which in general problem dependent. 
Therefore, future work has to be devoted to a consideration of further applications, e.g., in elastodynamics or electromagnetics. Another aspect that has to be addressed in future research are nonlinearities in the underlying system.

\section*{Acknowledgments}
The authors are grateful for financial support by the German Research Foundation (DFG) via grants IRTG~1529 and TRR~154 project B03, C02, C04, as well as by the ``Excellence Initiative'' of the German Federal and State Governments via the Graduate School of Computational Engineering GSC~233 at Technische Universität Darmstadt.
%


\appendix

\section*{Appendix}

\renewcommand{\thesection}{A\arabic{section}}

\section*{Part III. Functional analytic background}

The purpose of this appendix is to show rigorously that the algorithms presented in the previous sections lead to reduced models that satisfy properties (P1)--(P4) uniformly. 
Most of the following results can in principle be obtained by generalization of those in \cite{EggerKugler16b}. To give a complete presentation, we repeat the most important results required for our analysis and provide short proofs where they yield further insight.

\section{The infinite dimensional problem} \label{sec:properties}

Let us start with introducing the relevant notation. We denote by
\begin{align*}
L^2(\E) = \{ p : p|_e=p^e \in L^2(e) \quad \forall e \in \E\}
\end{align*}
the space of square integrable functions over the network with norm
\begin{align*}
\|p\|_{L^2(\E)} = (p,p)_\E^{1/2} 
\quad \text{and} \quad 
(p,\tilde p)_\E = \sum\nolimits_e (p^e,\tilde p^e)_{L^2(e)}.
\end{align*}
For convenience of notation, we will sometimes use the symbols $\|\cdot\|_{L^2}$ and $\|\cdot\|$ instead.
In addition to this basic function space, we will make use of the broken Sobolev space
\begin{align*}
H^1(\E) = \{ q : q^e \in H^1(e) \quad \forall e \in \E\}
\end{align*}
consisting of functions that are continuous along edges but may be discontinuous at interior vertices $v \in \Vi$. 
The broken derivative of a function $q \in H^1(\E)$ is denoted by $\dx' q$ defined by 
\begin{align*}
(\dx' q)|_e = \dx (q|_e) \qquad \text{for all } e \in \E. 
\end{align*}
This allows us to write $H^1(\E) = \{q \in L^2(\E) : \dx' q \in L^2(\E)\}$ with natural norm defined by
\begin{align*}
\|q\|_{H^1(\E)}^2 = \|q\|_{L^2(\E)}^2 + \|\dx' q\|^2_{L^2(\E)}.
\end{align*}
For a piecewise smooth function $q \in H^1(\E)$, we define at every interior vertex $v \in \Vi$
the value
\begin{align*}
[n q](v) = \sum\nolimits_{e \in \E(v)} n^e(v) q^e(v),
\end{align*}
which amounts to the imbalance in the coupling condition \eqref{eq:sys4}.
For a junction of only two pipes, the value $[nq]$ amounts to the \emph{jump} of $q$ across the junction and the symbol $[nq]$ is the one usually employ in the analysis of discontinuous Galerkin methods. 
These values of the jumps can be understood as a vector in $\RR^{\Vi}$ and
as the scalar product on $\RR^{\Vi}$, we use 
\begin{align*}
(\lambda,\mu)_{\Vi} = \sum\nolimits_{v \in \Vi} \lambda_v \mu_v. 
\end{align*}
In a similar way, we will denote by $(\lambda,\mu)_{\Vb}$ the corresponding scalar product for $\RR^{\Vb}$.
We now have the following variational characterization of solutions to our model problem.

\begin{lemma}
Let $(p,q)$ denote a smooth solution of \eqref{eq:sys1}--\eqref{eq:sys6}
and set $\lambda_v(t)=p_v(t)$. Then
\begin{align}
(a \dt p(t), \tilde p)_\E + (\dx' q(t), \tilde p)_\E &= 0, \label{eq:var1}\\
(b \dt q(t), \tilde q)_\E - (p(t), \dx' \tilde q)_\E + (d q(t), \tilde q)_\E + (\lambda(t), [n \tilde q])_{\Vi} &= -(u(t), n \tilde q)_{\Vb}, \label{eq:var2}\\
([n q(t)], \tilde \lambda)_\Vi &= 0, \label{eq:var3}
\end{align}
for all test functions $\tilde p \in L^2(\E)$, $\tilde q \in H^1(\E)$, $\tilde \lambda \in \RR^{\Vi}$, and all $t \ge 0$.
\end{lemma}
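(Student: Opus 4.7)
The plan is to derive each of the three variational identities directly from the strong form \eqref{eq:sys1}--\eqref{eq:sys6} by multiplying with test functions, integrating edge-wise, and summing over the network. For \eqref{eq:var1} there is essentially nothing to do: multiplying \eqref{eq:sys1} by $\tilde p \in L^2(\E)$ on each edge and summing gives the identity immediately, since no derivatives of $\tilde p$ are involved. Likewise, the third identity \eqref{eq:var3} is just a dual reformulation of the Kirchhoff-type coupling \eqref{eq:sys4}: the pointwise relation $[nq](v) = 0$ at every inner vertex, tested against $\tilde\lambda \in \RR^{\Vi}$ and summed over $v \in \Vi$, yields precisely \eqref{eq:var3}.

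The substantive step is therefore \eqref{eq:var2}. I would multiply \eqref{eq:sys2} by $\tilde q \in H^1(\E)$, integrate over each edge, and apply integration by parts to the term $\int_e \dx p^e \tilde q^e\,dx$. This produces $-\int_e p^e \dx \tilde q^e\,dx$ together with boundary contributions at the two endpoints of $e$, which after summation over $e \in \E$ can be reorganized vertex by vertex as
\begin{equation*}
\sum\nolimits_{v \in \V} \sum\nolimits_{e \in \E(v)} p^e(v)\, n^e(v)\, \tilde q^e(v),
\end{equation*}
using the orientation convention $n^e(v) = \mp 1$. This sum then splits naturally into contributions from interior and boundary vertices.

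At each inner vertex $v \in \Vi$, the continuity condition \eqref{eq:sys3} lets me pull the common pressure value $\lambda_v = p^e(v)$ out of the inner sum, leaving exactly $\lambda_v [n \tilde q](v)$; summing over $v \in \Vi$ produces the coupling term $(\lambda, [n \tilde q])_{\Vi}$ on the left-hand side of \eqref{eq:var2}. At each boundary vertex $v \in \Vb$ there is a unique incident edge, and the Dirichlet input condition \eqref{eq:sys5} substitutes $p^e(v) = u_v$, producing the term $(u, n\tilde q)_{\Vb}$; moving it to the right-hand side yields \eqref{eq:var2} verbatim. I do not anticipate any real obstacle here, since everything reduces to standard integration by parts on intervals; the only place requiring care is the consistent bookkeeping of the orientation signs $n^e(v)$ so that the edgewise boundary terms reassemble correctly into the vertex contributions above.
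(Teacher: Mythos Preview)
Your proposal is correct and follows essentially the same route as the paper's proof: \eqref{eq:var1} and \eqref{eq:var3} are immediate from \eqref{eq:sys1} and \eqref{eq:sys4}, while \eqref{eq:var2} is obtained by multiplying \eqref{eq:sys2} with $\tilde q$, integrating by parts on each edge, reorganizing the resulting endpoint contributions vertex by vertex, and then invoking the continuity condition \eqref{eq:sys3} at interior vertices and the boundary condition \eqref{eq:sys5} at the ports. The bookkeeping of the orientation signs $n^e(v)$ that you flag is indeed the only point requiring care, and your treatment of it matches the paper's.
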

\begin{proof}
The proof follows with similar arguments as in \cite{EggerKugler16b}. 
We therefore only sketch the required modifications:
The validity of \eqref{eq:var1} and \eqref{eq:var3} follows directly from \eqref{eq:sys1} and \eqref{eq:sys4}. 
Using integration-by-parts on one single edge $e=(v_1^e,v_2^e)$, 
we get
\begin{align*}
(\dx p^e,\tilde q^e)_e 
&= -(p^e,\dx \tilde q^e)_e + n^e(v_1^e) \tilde q^e(v_1^e) p^e(v_1^e) + n^e(v_2^e) \tilde q^e(v_2^e) p^e(v_2^e).
\end{align*}
Summing over all edges, this yields terms at the inner vertices that can be reordered as
\begin{align*}
&\sum\nolimits_{e \in \E} n^e(v_1^e) \tilde q^e(v_1^e) p^e(v_1^e) + n^e(v_2^e) \tilde q^e(v_2^e) p^e(v_2^e) \\
&\qquad \qquad \qquad 
= \sum\nolimits_{v \in \Vi} \sum\nolimits_{e \in \E(v)} n^e(v) \tilde q^e(v) p^e(v) 
+ \sum\nolimits_{v \in \Vb} n^e(v) \tilde q^e(v) p^e(v).
\end{align*}
Using that $p^e(v)=\lambda_v$ for $v \in \Vi$ and $p^e(v)=u_v$ for $v \in \Vb$ and the definition of $[n \tilde q]$, this shows the validity of \eqref{eq:var2} and completes the proof of the lemma.
\end{proof}

\begin{remark}[Well-posedness]
Together with the initial conditions \eqref{eq:sys5}, the variational problem \eqref{eq:var1}--\eqref{eq:var3} 
can be shown to admit a unique solution which, for sufficiently regular inputs and initial data,
corresponds to the classical solution of \eqref{eq:sys1}--\eqref{eq:sys6}. 
The variational formulation is thus equivalent to the initial boundary value problem;
see \cite{EggerKugler16b} for details.
\end{remark}

We now proceed by establishing the properties (P1)--(P4) on the continuous level.
The results again follow with a slight modification of the arguments given in \cite{EggerKugler16b}. 

\subsection{Conservation of mass}
The mass of the fluid contained in a single pipe is given by 
\begin{align*}
m^e(t) = \int_e a^e p^e(t) dx.
\end{align*}
Using the balance equation \eqref{eq:sys1} and the conservation condition \eqref{eq:sys4}, 
we obtain 

\begin{lemma}[Conservation of mass] \label{lem:mass} $ $ \\
Let $m(t) = \sum\nolimits_{e \in \E} m^e(t)$ denote the total mass contained in the network.
Then 
\begin{align*}
\frac{d}{dt} m(t) = \sum\nolimits_{v \in \Vb} y_v(t), 
\end{align*}
i.e., the change of mass is caused only by flux across the boundary of the network.  
\end{lemma}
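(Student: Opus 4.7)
The plan is to differentiate $m(t) = \sum_{e\in\E}\int_e a^e p^e(t)\,dx$ under the integral sign, substitute the conservation-of-mass PDE \eqref{eq:sys1}, and then apply the fundamental theorem of calculus on each edge. This converts a sum of integrals into boundary values at the endpoints of every pipe, which can then be reorganized into contributions at inner vertices and boundary vertices of the graph.

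More precisely, I would first write
\[
\frac{d}{dt} m(t) = \sum_{e\in\E} \int_e a^e \dt p^e \, dx = -\sum_{e\in\E}\int_e \dx q^e\, dx,
\]
using \eqref{eq:sys1}. Evaluating each edge integral by the fundamental theorem of calculus yields $-\sum_{e\in\E}\bigl(q^e(v_2^e) - q^e(v_1^e)\bigr)$, which, using the orientation signs $n^e(v)=\pm 1$ introduced after \eqref{eq:sys3}, can be rewritten as $-\sum_{e\in\E}\sum_{v\in\E^{-1}(e)} n^e(v) q^e(v)$, i.e., a sum over all (edge, endpoint) pairs.

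The key regrouping is to split this double sum into a contribution from inner vertices $v\in\Vi$ and boundary vertices $v\in\Vb$:
\[
\frac{d}{dt}m(t) = -\sum_{v\in\Vi}\sum_{e\in\E(v)} n^e(v) q^e(v) - \sum_{v\in\Vb}\sum_{e\in\E(v)} n^e(v) q^e(v).
\]
The inner-vertex sum vanishes by the Kirchhoff-type coupling condition \eqref{eq:sys4}. The boundary vertex $v\in\Vb$ lies on a single edge, so the inner sum reduces to $n^e(v)q^e(v)$, which by the output definition \eqref{eq:sys6} equals $-y_v$. This yields the claimed identity $\frac{d}{dt}m(t) = \sum_{v\in\Vb} y_v(t)$.

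There is no serious obstacle here; the proof is a straightforward calculus-on-networks computation. The only minor technical point is keeping careful track of the orientation convention $n^e(v)=\mp 1$ depending on whether $e$ starts or ends at $v$, so that the bookkeeping between the fundamental theorem of calculus, the coupling condition \eqref{eq:sys4}, and the sign convention in the definition \eqref{eq:sys6} of the output $y_v$ is consistent. Smoothness of the solution, which is guaranteed in the setting of this lemma, justifies differentiating under the integral and applying the pointwise PDE.
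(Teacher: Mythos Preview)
Your proposal is correct and follows essentially the same route as the paper: differentiate under the integral, substitute \eqref{eq:sys1}, apply the fundamental theorem of calculus edgewise, regroup the endpoint contributions by vertex, kill the inner-vertex terms via the Kirchhoff condition \eqref{eq:sys4}, and identify the remaining boundary terms with $y_v$ via \eqref{eq:sys6}. The paper's proof is slightly more terse but the steps and ingredients are identical.
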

\begin{proof}
Using \eqref{eq:sys1}, the fundamental theorem of calculus, and \eqref{eq:sys3}, we get 
\begin{align*}
\frac{d}{dt} \sum\nolimits_{e \in \E} m^e 
&= \sum\nolimits_{e \in \E} \int_e a^e \dt p^e dx  
 = \sum\nolimits_{e \in \E} \int_e -\dx q^e dx 
 = \sum\nolimits_{e \in \E} -n^e(v_1^e) q^e(v_1^e) - n^e(v_2^e) q^e(v_2^e) \\
&= \sum\nolimits_{v \in \V} \sum\nolimits_{e \in \E(v)} -n^e(v) q^e(v) 
 = \sum\nolimits_{v \in \Vb} -n^e(v) q^e(v).
\end{align*}
The result then follows by using the special form of the output $y_v$ given in \eqref{eq:sys7}.
\end{proof}

\subsection{Energy dissipation}
Let us first prove property (P1) by deriving an explicit energy dissipation relation.
The total acoustic energy contained in a single pipe is given by 
\begin{align*}
E^e(t) = \frac{1}{2} \int_e a^e |p^e(t)|^2 + b^e |q^e(t)|^2 dx. 
\end{align*}
From the differential equations \eqref{eq:sys1}--\eqref{eq:sys2} 
and the algebraic continuity conditions \eqref{eq:sys4}--\eqref{eq:sys3},
we can now deduce the following energy dissipation relation.

\begin{lemma}[Energy dissipation and port-Hamiltonian structure] \label{lem:energy} $ $ \\
Let $E(t) = \sum\nolimits_e E^e(t)$ denote the total acoustic energy contained in the network.
Then 
\begin{align*}
\frac{d}{dt} E(t) = -\sum\nolimits_{e\in\E} \int_e d^e |q^e(t)|^2 dx + \sum\nolimits_{v \in \Vb} u_v(t) y_v(t),
\end{align*}
i.e., the change of total energy is caused by power dissipated through the damping mechanism and supplied or drained at the system ports.
\end{lemma}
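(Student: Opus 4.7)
The plan is to mimic the formal calculation given in the overview of property (P2) but to justify each step using the differential equations \eqref{eq:sys1}--\eqref{eq:sys2} and the coupling/boundary conditions \eqref{eq:sys4}--\eqref{eq:sys5}. First I would differentiate under the integral sign, edge by edge, to obtain
\[
\frac{d}{dt} E(t) \;=\; \sum_{e \in \E} \int_e a^e p^e \dt p^e + b^e q^e \dt q^e \, dx.
\]
Substituting the balance laws $a^e \dt p^e = -\dx q^e$ and $b^e \dt q^e = -\dx p^e - d^e q^e$ from \eqref{eq:sys1}--\eqref{eq:sys2} turns the right-hand side into
\[
-\sum_{e \in \E} \int_e \bigl(p^e \dx q^e + q^e \dx p^e\bigr)\,dx \;-\; \sum_{e \in \E} \int_e d^e |q^e|^2\,dx,
\]
and the first sum is recognized as $-\sum_e \int_e \dx(p^e q^e)\,dx$.

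Next I would apply the fundamental theorem of calculus on every edge $e$, picking up boundary contributions of the form $-n^e(v)\,p^e(v)\,q^e(v)$ at the two endpoints of $e$ (using the sign convention that fixes the orientation via $n^e(v)$). Reordering the resulting double sum by vertices, as is done in the proof of the variational formulation above, yields
\[
-\sum_{e \in \E}\int_e \dx(p^e q^e)\,dx \;=\; -\sum_{v \in \Vi}\sum_{e \in \E(v)} n^e(v) p^e(v) q^e(v) \;-\; \sum_{v \in \Vb} n^e(v) p^e(v) q^e(v).
\]
For an interior vertex $v \in \Vi$, the continuity condition \eqref{eq:sys3} lets me factor out the common value $p^e(v)=\lambda_v$, and the Kirchhoff condition \eqref{eq:sys4} then forces $\sum_{e \in \E(v)} n^e(v) q^e(v) = 0$; hence all interior contributions vanish. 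At a boundary vertex $v \in \Vb$ there is exactly one incident pipe; using $p^e(v)=u_v$ from \eqref{eq:sys5} and the definition $y_v=-n^e(v)q^e(v)$ from \eqref{eq:sys6}, the boundary term collapses to $\sum_{v \in \Vb} u_v y_v$.

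Combining the two surviving pieces gives exactly the stated identity. The calculation is essentially routine; the only subtlety is the careful bookkeeping of the orientation signs $n^e(v)$ so that the edge-wise boundary terms reassemble correctly into a vertex sum, which is precisely the place where the coupling conditions \eqref{eq:sys3}--\eqref{eq:sys4} need to be used in tandem. Since we restrict to smooth solutions, no regularity issues arise in differentiating under the integral or in applying the fundamental theorem of calculus on each pipe.
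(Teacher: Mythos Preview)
Your proof is correct and follows essentially the same route as the paper's: differentiate the energy, substitute the balance laws, collapse the mixed terms to boundary contributions on each edge, regroup by vertices, and eliminate the interior terms via \eqref{eq:sys3}--\eqref{eq:sys4}. The only cosmetic difference is that you combine $p^e\dx q^e + q^e\dx p^e$ into $\dx(p^eq^e)$ and invoke the fundamental theorem of calculus, whereas the paper integrates one of the two terms by parts; these are the same manipulation.
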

\begin{proof}
By elementary calculations and the partial differential equations \eqref{eq:sys1}--\eqref{eq:sys2}, we get
\begin{align*}
\frac{d}{dt} \frac{1}{2} \int_e a^e |p^e|^2 + b^e |q^e|^2 dx 
&= \int_e a^e \dt p^e p^e + b^e \dt q^e q^e dx \\
&= \int_e (-\dx q^e) p^e + (-\dx p^e - d^e q^e) q^e dx.
\end{align*}
Integration-by-parts of the second term in the last equation on $e=(v_1^e,v_2^e)$ gives 
\begin{align*}
\int_e (-\dx p^e) q^e dx = \int_e p^e \dx q^e dx - n^e(v_1^e) q^e(v_1^e) p^e(v_1^e) -  n^e(v_2^e) q^e(v_2^e) p^e(v_2^e).
\end{align*}
Summing over all edges $e$, using the definition of the total energy, and \eqref{eq:sys4}--\eqref{eq:sys3} leads to 
\begin{align*}
\frac{d}{dt} E(t) 
&= -\sum\nolimits_{e \in \E} \int_e d^e |q^e|^2 dx -  \sum\nolimits_{v \in \Vb} n^e(v) q^e(v) p^e(v).
\end{align*}
The result now follows from definition of the in- and output.
\end{proof}

\subsection{Exponential stability}
Due to linearity of the problem, it suffices to consider the homogeneous case.
The energy balance then reveals that kinetic energy is dissipated by the damping mechanism. 
This, however, also leads to a reduction of the total energy resulting in the exponential stability of the system stated as property (P4).  
\begin{lemma}[Exponential stability] \label{lem:stability} $ $ \\
Let $u(t) \equiv 0$ for $0 \le t_1 \le t \le t_2$, 
and $E(t)$ be the total energy of the system. 
Then 
\begin{align*}
E(t) \le C e^{-\gamma (t-s)} E(s) \qquad \text{for all } t_1 \le s \le t \le t_2,
\end{align*}
with positive constants $C,\gamma>0$ that are independent of $t_1$, $t_2$, $s$, and $t$.
\end{lemma}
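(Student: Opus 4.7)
The strategy is to combine the energy dissipation identity with an observability-type inequality, a route that is standard for damped wave systems. By Lemma~\ref{lem:energy} with $u \equiv 0$ we have the identity
\begin{align*}
E(t) - E(s) = -\int_s^t \sum_{e \in \E} \int_e d^e |q^e(\tau)|^2 \, dx \, d\tau,
\end{align*}
which shows that $E$ is non-increasing but only directly controls the $q$-component of the state. To upgrade this to exponential decay I would establish an observability inequality of the form
\begin{align*}
E(s) \le C_T \int_s^{s+T} \sum_{e \in \E} \int_e d^e |q^e(\tau)|^2 \, dx \, d\tau
\end{align*}
for some fixed $T > 0$ and a constant $C_T$ independent of $s$. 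Combining the two bounds yields $E(s+T) \le (1 - 1/C_T)\, E(s)$; iterating across successive intervals of length $T$ and interpolating by monotonicity of $E$ between the nodes delivers the claim with rate $\gamma = T^{-1} \log\bigl(C_T/(C_T-1)\bigr)$ and constant $C = C_T/(C_T-1)$.

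For the observability inequality I would argue by compactness-uniqueness. Suppose the inequality fails; then there exist initial data $(p_0^k, q_0^k)$ with $E^k(0) = 1$ and $\int_0^T \sum_e \int_e d^e |q^{e,k}|^2 \, dx \, d\tau \to 0$. Standard a priori energy bounds allow extraction of a weakly convergent subsequence whose limit $(p,q)$ solves the homogeneous problem on $[0,T]$ with $q \equiv 0$. Feeding $q \equiv 0$ into \eqref{eq:sys1}--\eqref{eq:sys2} forces $\dt p \equiv 0$ and $\dx p \equiv 0$ on every edge, so $p$ is constant on each pipe and in time. The pressure continuity condition \eqref{eq:sys3} then forces these edge-constants to agree across every inner vertex, and since $\G$ is connected $p$ reduces to a single global constant; the homogeneous port condition $p = u = 0$ at any $v \in \Vb$ eliminates this constant, so $p \equiv 0$, which together with a lower semicontinuity argument contradicts $E^k(0) = 1$.

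The principal obstacle is the observability step itself, in particular verifying that the weak limit respects the coupling and boundary conditions \eqref{eq:sys4}--\eqref{eq:sys5}; this requires enough regularity in the variational framework of Lemma~\ref{lem:energy} for pointwise vertex evaluations to pass to the limit. A more explicit alternative is a multiplier estimate, testing \eqref{eq:sys2} with an $xbq$-type multiplier edge by edge and summing: the junction contributions must be absorbed using \eqref{eq:sys4}, the boundary contributions using the homogeneous version of \eqref{eq:sys5}. This route produces a quantitative $C_T$ at the cost of more graph bookkeeping. Uniformity of $C$ and $\gamma$ in $s$, $t_1$, $t_2$ is automatic by time-translation invariance of the autonomous homogeneous system, so the constants depend only on $T$ and the fixed coefficients $a^e, b^e, d^e$ and on the topology of $\G$.
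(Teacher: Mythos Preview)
The paper does not actually prove this lemma here: it gives a one-line sketch (``energy estimates, some graph theoretic results, and a generalized Poincar\'e inequality'') and defers to \cite{EggerKugler16b}. That route is a direct Lyapunov/energy approach: one constructs a modified functional equivalent to $E$ whose derivative is bounded by $-c E$, and the Poincar\'e-type inequality on the connected graph is precisely what controls the potential part $\|p\|^2$ by quantities that the damping sees. This yields explicit, quantitative constants depending only on the coefficients and the graph geometry.

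Your observability/compactness--uniqueness route is a genuinely different but legitimate alternative, standard for damped wave systems. The trade-off is that it is softer: you get existence of $C,\gamma$ but no formula for them, whereas the Poincar\'e/Lyapunov argument (and the multiplier variant you mention) is constructive. One technical point to tighten: the phrase ``lower semicontinuity argument contradicts $E^k(0)=1$'' is the wrong direction---weak lower semicontinuity gives $E(0)\le\liminf E^k(0)$, which is no contradiction when the limit is zero. What you actually need is to upgrade to strong convergence of the initial data (or of the trajectory at some fixed time), typically via an Aubin--Lions compactness step using the equations; only then does $E^k(0)=1$ force the limit to be nontrivial. You are right that the vertex conditions \eqref{eq:sys3}--\eqref{eq:sys5} and the connectedness of $\G$ are exactly what kill the residual constant in the uniqueness step; this is the graph-theoretic ingredient the paper alludes to.
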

\begin{proof}
The proof is based on energy estimates, some graph theoretic results, and a 
generalized Poincar\'e inequality; we refer to \cite{EggerKugler16b} for details.
\end{proof}

\subsection{Steady states}
From the previous result, we obtain convergence to zero steady state in case of homogeneous input $u \equiv 0$. 
Due to the linearity of the problem, this yields also the existence of unique and stable steady states in the general case.
\begin{lemma}[Steady states] \label{lem:equilibrium}
Let $u(t) \equiv const$ for all $t \ge t_1$. Then $(p(t),q(t))$ converges to 
a steady state $(\bar p, \bar q)$, which is the unique solution of the corresponding stationary problem.
\end{lemma}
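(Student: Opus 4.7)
The plan is to exploit linearity to split the solution into a stationary part and a transient part, handle the stationary part by reducing it to a finite-dimensional linear system on the graph, and then invoke Lemma~\ref{lem:stability} to get exponential decay of the transient.

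First I would construct the steady state $(\bar p, \bar q)$ as a solution of the corresponding stationary problem obtained by setting $\dt p = \dt q = 0$ in \eqref{eq:sys1}--\eqref{eq:sys5}: on each edge this gives $\dx \bar q^e = 0$ and $\dx \bar p^e + d^e \bar q^e = 0$, so $\bar q^e$ is constant along every pipe and $\bar p^e$ is affine. Uniqueness of $(\bar p, \bar q)$ then amounts to unique solvability of the linear system consisting of the mass balance $[n \bar q](v) = 0$ and pressure continuity at each $v \in \Vi$ together with $\bar p^e(v) = u_v$ at $v \in \Vb$ for the edgewise constants $\bar q^e$ and the vertex pressures. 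This is a saddle-point problem whose well-posedness follows from the continuous analogue of assumption (A0), i.e.\ from the graph incidence structure making the pair (divergence, coupling) surjective; equivalently, the condition that $[G^\top,N^\top]$ have trivial nullspace in the discrete setting, which the corresponding result in \cite{EggerKugler16b} transfers to the infinite-dimensional level.

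Next, by linearity, the difference $(\tilde p, \tilde q) := (p - \bar p, q - \bar q)$ solves the system \eqref{eq:sys1}--\eqref{eq:sys5} with boundary input $u_v(t) - u_v = 0$ for all $v \in \Vb$ and $t \ge t_1$. Applying Lemma~\ref{lem:stability} on $[t_1,\infty)$ yields
\begin{align*}
\widetilde E(t) \;\le\; C\, e^{-\gamma (t-t_1)}\, \widetilde E(t_1), \qquad t \ge t_1,
\end{align*}
where $\widetilde E(t) = \tfrac{1}{2}\sum_{e\in\E}\int_e a^e|\tilde p^e(t)|^2 + b^e|\tilde q^e(t)|^2\,dx$. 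Since the coefficients $a^e, b^e$ are positive and bounded below, $\widetilde E$ is equivalent to $\|\tilde p\|_{L^2(\E)}^2 + \|\tilde q\|_{L^2(\E)}^2$, which shows exponential convergence of $(p(t),q(t))$ to $(\bar p,\bar q)$ in $L^2(\E) \times L^2(\E)$ as $t \to \infty$.

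The main obstacle is the first step, namely establishing existence and uniqueness of the steady state: the time-independent problem is a saddle-point system on the graph whose solvability is not immediate and must be traced back to the structural injectivity condition of the coupling/incidence operator. Once this is settled, the transient estimate is a direct application of Lemma~\ref{lem:stability}, and linearity glues the two pieces together.
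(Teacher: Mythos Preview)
Your proposal is correct and follows essentially the same route as the paper: the paper likewise cites \cite{EggerKugler16b} for existence and uniqueness of the steady state and then observes that the difference $(p-\bar p,q-\bar q)$ solves \eqref{eq:sys1}--\eqref{eq:sys5} with $u\equiv 0$, so that convergence follows from Lemma~\ref{lem:stability}. Your additional sketch of the stationary problem as a finite-dimensional saddle-point system on the graph is more explicit than the paper's citation but entirely in line with it.
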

\begin{proof}
The existence of a unique steady state has been established in \cite{EggerKugler16b}. 
The difference to steady state $(p(t)-\bar p,q(t)-\bar q)$ solves \eqref{eq:sys1}--\eqref{eq:sys5} with $u \equiv 0$
and convergence to steady state thus follows by the exponential stability estimate given in the previous lemma. 
\end{proof}

\section{Galerkin approximation} \label{sec:galerkin}

We now extend the discretization strategy proposed in \cite{EggerKugler16b} to our setting and review the basic results about the stability of these full order models.
Let $P_h \subset L^2(\E)$ and $Q_h \subset H^1(\E)$
be finite dimensional spaces and set $\Lambda_h = \RR^{\Vi}$. 
Let $T>0$ and consider the following conforming Galerkin approximations of the variational principle \eqref{eq:var1}--\eqref{eq:var3} as space discretization.
\begin{problem}[Galerkin approximation and discrete variational problem] \label{prob:galerkin} $ $\\
Find $p_h \in H^1(0,T;P_h)$, $q_h \in H^1(0,T;Q_h)$, and $\lambda_h \in L^2(0,T;\Lambda_h)$ 
such that 
\begin{align*}
(p_h(0),\tilde p_h)_\E = (p_0,\tilde p_h)_\E
\qquad \text{and} \qquad  
(q_h(0),\tilde q_h)_\E = (q_0, \tilde q_h)_\E
\end{align*}
for all $\tilde p_h \in P_h$ and $\tilde q_h \in Q_h$, and such that the discrete variational equations
\begin{align}
(a \dt p_h(t), \tilde p_h)_\E + (\dx' q_h(t), \tilde p_h)_\E &= 0, \label{eq:var1_disc}\\
(b \dt q_h(t), \tilde q_h)_\E - (p_h(t), \dx' \tilde q_h)_\E + (d q_h(t),\tilde q_h)_\E + (\lambda_h(t), [n \tilde q_h])_{\Vi} &= (u(t),n \tilde q_h)_{\Vb}, \label{eq:var2_disc}\\
([n q_h(t)],\tilde \lambda_h)_{\Vi} &= 0 \label{eq:var3_disc}
\end{align}
hold for all test functions $\tilde p_h \in P_h$, $\tilde q_h \in Q_h$, $\tilde \lambda_h \in \Lambda_h$, and all $0 \le t \le T$. 
\end{problem}
\noindent
A simple compatibility condition allows to deduce the well-posedness of this problem.
\begin{lemma}[Discrete well-posedness] $ $\\
Assume that $\{1^e : e \in \E_h\} \subset Q_h$, where $1^e$ denotes the function in $L^2(\E)$ which is constant one on the edge $e$ and zero otherwise.
Then Problem~\ref{prob:galerkin} has a unique solution. 
\end{lemma}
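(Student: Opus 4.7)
The plan is to recast Problem~\ref{prob:galerkin} as a linear differential-algebraic system of the form \eqref{eq:lti1}--\eqref{eq:lti3} by expanding $p_h$, $q_h$, and $\lambda_h$ in bases of $P_h$, $Q_h$, and $\Lambda_h=\RR^{\Vi}$, and then to invoke Lemma~\ref{lem:pencil}. Thus the task reduces to verifying that the resulting system matrices satisfy assumption~(A0).

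The Gram matrices $M_1$, $M_2$, $D$ representing the bilinear forms $(a\cdot,\cdot)_\E$, $(b\cdot,\cdot)_\E$, $(d\cdot,\cdot)_\E$ are symmetric and positive definite, since $a^e$, $b^e$, $d^e$ are positive on each pipe. The coupling matrix $G$ encodes $(\dx' q_h,\tilde p_h)_\E$, and its surjectivity onto (the algebraic dual of) $P_h$ is part of the mixed-finite-element compatibility of the pair $(P_h,Q_h)$ inherited from the construction in \cite{EggerKugler16b}. The matrix $N$ represents the discrete jump map $q_h\mapsto([n q_h](v))_{v\in\Vi}$, and the remaining part of (A0)---that $[G^\top,N^\top]$ has trivial nullspace---is, by the remark following the statement of (A0), equivalent to $N|_{\N(G)}$ being surjective onto $\RR^{\Vi}$.

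To establish this last surjectivity I would exploit the hypothesis as follows. Every $1^e$ has vanishing broken derivative, so $G\,1^e=0$, and its only nonzero jumps at interior vertices occur at its two endpoints, with signs $n^e(v)=\mp 1$. Given $v_0\in\Vi$, use connectedness of $\G$ and $\Vb\neq\emptyset$ to pick a simple path $v_0=w_0,e_1,w_1,\ldots,e_k,w_k\in\Vb$; then a suitable signed combination $\sum_{i=1}^k\sigma_i\,1^{e_i}$ lies in $\N(G)$ and has jumps that telescope to $1$ at $v_0$, $0$ at the interior vertices $w_1,\ldots,w_{k-1}$, and an irrelevant value at the boundary vertex $w_k\notin\Vi$; jumps at interior vertices off the path vanish because the corresponding edges do not contribute. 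Letting $v_0$ range over $\Vi$ produces a basis of $\RR^{\Vi}$ in the image of $N|_{\N(G)}$, which is the desired surjectivity.

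With (A0) in hand, Lemma~\ref{lem:pencil} yields regularity of the pencil $sE+A$, and standard DAE theory then delivers a unique solution $(x_1,x_2,x_3)$ for the prescribed initial data and input, which translates back into the unique $(p_h,q_h,\lambda_h)$ for Problem~\ref{prob:galerkin}. The main obstacle is the path-telescoping argument, which hinges crucially on the graph being connected and possessing at least one boundary vertex; as a fallback, one can bypass the general DAE machinery by a direct energy identity obtained from testing \eqref{eq:var1_disc} with $p_h$ and \eqref{eq:var2_disc} with $q_h$ and using \eqref{eq:var3_disc} to cancel the multiplier term, which gives uniqueness of $(p_h,q_h)$; the multiplier $\lambda_h$ is then recovered from \eqref{eq:var2_disc} via the injectivity of $N^\top$ that the same path construction provides.
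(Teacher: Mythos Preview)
Your route is different from the paper's and is almost right, but it contains one genuine gap.

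The paper does not verify (A0) or invoke pencil regularity. It uses the hypothesis $\{1^e\}\subset Q_h$ to \emph{eliminate} the constraint: restricting trial and test functions for the second component to the subspace $Q_h^0=\{q\in Q_h:[nq]=0\}$ kills the multiplier term in \eqref{eq:var2_disc}, leaving a linear ODE on $P_h\times Q_h^0$ to which Picard--Lindel\"of applies directly; the multiplier is recovered afterwards. This is shorter and avoids DAE machinery altogether.

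Your approach via Lemma~\ref{lem:pencil} can be made to work, but you overreach when you claim all of (A0). Surjectivity of $G$ is condition~(A2$_h$), and it is \emph{not} among the hypotheses of this lemma, which is stated for arbitrary finite-dimensional $P_h\subset L^2(\E)$, $Q_h\subset H^1(\E)$ subject only to $\{1^e\}\subset Q_h$. Your appeal to ``the mixed-finite-element compatibility \ldots\ inherited from the construction in \cite{EggerKugler16b}'' is therefore unjustified here. The fix is easy: the Remark immediately following Lemma~\ref{lem:pencil} states that regularity of $sE+A$ for $s>0$---which is all you need for the time-dependent problem---requires only injectivity of $N^\top$, and your path-telescoping argument establishes precisely that (indeed it shows the stronger fact that $N$ is already surjective on $\operatorname{span}\{1^e:e\in\E\}\subset\N(G)$). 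So drop the unsupported claim about $G$ and invoke only the part of the pencil argument you can actually justify.

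What your approach buys over the paper's is an explicit, graph-theoretic proof that $N|_{\N(G)}$ is onto $\RR^{\Vi}$; this is exactly the second half of condition~(A3) and is used elsewhere in the paper, so making it explicit here has independent value. The paper's route is more elementary but leaves that surjectivity implicit.
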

\begin{proof}
The condition $\{1^e : e \in \E\} \subset Q_h$ allows to eliminate the constraint and the Lagrange multiplier $\lambda$ from the system; compare with Section~\ref{sec:ode}. 
By choosing bases for the spaces $P_h$ and $Q_h$, we may thus obtain linear system of ordinary differential equations. Existence and uniqueness then follow from the Picard-Lindelöf theorem.
\end{proof}

\begin{remark} \label{rem:reduction}
Once the solution of the discretized problem is found, 
we can define the corresponding output of the discrete system  by $y_h(v) = -n^e(v) q^e_h(v)$ for $v \in \Vb$ and $e \in \E(v)$. 
Let us note that elimination of the constraints \eqref{eq:var3_disc} yields the problem originally considered in \cite{EggerKugler16b}. All results obtained in that paper therefore carry over to the problem considered here, if the condition of the previous Lemma is satisfied, which is called assumption (A3$_h$) below. 
\end{remark}

We next establish the properties (P1)--(P4) for the Galerkin approximations introduced above. The results again follow with similar arguments as used in \cite{EggerKugler16b}. Let us emphasize that additional conditions on the approximation spaces $P_h$ and $Q_h$ are required for some of the results.

\subsection{Conservation of mass}

Mass conservation on the continuous level follows by testing \eqref{eq:var1} with the function $\tilde p \equiv 1$
and some elementary manipulations. A discrete equivalent of this result can be obtained, 
if $\tilde p_h \equiv 1$ is contained in the test space.

\begin{lemma}[Discrete mass conservation] \label{lem:mass_disc}$ $\\
Let $m_h(t) = \sum\nolimits_{e \in \E} \int_e a p_h(t) dx$ denote the total mass of the discrete system, 
and let
\begin{itemize}\itemsep1ex
 \item[(A1$_h$)] $1 \in P_h$.
\end{itemize}
Then the total mass changes only due to flux across the boundary, i.e., 
\begin{align*}
\frac{d}{dt} m_h(t) = \sum\nolimits_{v \in \Vb} -n(v) q_h(v) = \sum\nolimits_{v \in \Vb} y_h(v).
\end{align*}
\end{lemma}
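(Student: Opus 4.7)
The plan is to mirror the continuous proof of Lemma~\ref{lem:mass} on the discrete level, the key new ingredient being that assumption (A1$_h$) makes the constant test function admissible in the discrete variational mass balance~\eqref{eq:var1_disc}. Once this is observed, the remainder is an edgewise integration by parts combined with the interior coupling condition enforced by~\eqref{eq:var3_disc}.

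The first step is to differentiate $m_h(t)=(a,p_h(t))_\E$ in time and use that $p_h\in H^1(0,T;P_h)$ to pass the derivative under the integral, giving $\tfrac{d}{dt}m_h(t)=(a\,\dt p_h(t),1)_\E$. Since (A1$_h$) guarantees $1\in P_h$, I may choose $\tilde p_h\equiv 1$ as a test function in~\eqref{eq:var1_disc} and conclude that
\begin{align*}
\tfrac{d}{dt}m_h(t) \;=\; (a\,\dt p_h(t),1)_\E \;=\; -(\dx' q_h(t),1)_\E.
\end{align*}

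The second step is to evaluate $(\dx' q_h,1)_\E=\sum_{e\in\E}\int_e \dx q_h^e\,dx$ by the fundamental theorem of calculus on each edge $e=(v_1^e,v_2^e)$, obtaining $\int_e\dx q_h^e\,dx=q_h^e(v_2^e)-q_h^e(v_1^e)=n^e(v_1^e)q_h^e(v_1^e)+n^e(v_2^e)q_h^e(v_2^e)$. Reindexing the sum over edges as a sum over vertices and splitting $\V=\Vi\cup\Vb$ yields
\begin{align*}
(\dx' q_h,1)_\E \;=\; \sum_{v\in\Vi}[n\,q_h](v) \;+\; \sum_{v\in\Vb} n^e(v) q_h^e(v).
\end{align*}
The interior terms vanish: testing~\eqref{eq:var3_disc} with the indicator $\tilde\lambda_h=e_v\in\Lambda_h=\RR^{\Vi}$ for each $v\in\Vi$ shows that $[n\,q_h(t)](v)=0$ at every interior vertex.

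Combining the two steps and recalling the definition $y_h(v)=-n^e(v)q_h^e(v)$ for $v\in\Vb$ gives $\tfrac{d}{dt}m_h(t)=-\sum_{v\in\Vb}n^e(v)q_h^e(v)=\sum_{v\in\Vb}y_h(v)$, as claimed. There is no real obstacle beyond recognising that the only place where the constant function enters is as a test function, which is precisely the content of (A1$_h$); without this assumption the identity would only hold up to the projection error of $1$ onto $P_h$, which explains why the compatibility condition is indispensable rather than a technical convenience.
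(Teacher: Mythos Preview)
Your proof is correct and follows exactly the approach the paper intends: the paper's own proof is the single line ``The assertion follows in the same manner as that of Lemma~\ref{lem:mass},'' and your argument is precisely that discrete analogue, testing \eqref{eq:var1_disc} with $\tilde p_h\equiv 1$ (admissible by (A1$_h$)), applying the fundamental theorem of calculus edgewise, and using \eqref{eq:var3_disc} to kill the interior contributions. Your closing remark on why (A1$_h$) is essential is a helpful addition beyond what the paper states.
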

\begin{proof}
The assertion follows in the same manner as that of Lemma~\ref{lem:mass}.
\end{proof}

\subsection{Energy balance}
Mimicking the notation used on the continuous level, 
we may define for every pipe $e \in \E$ the total (discrete) energy content of the pipe by
\begin{align*}
E^e_h(t) = \frac{1}{2} \int_e a^e |p_h(t)|^2 + b^e |q_h(t)|^2.
\end{align*}
With the same arguments as on the continuous level, we then obtain
\begin{lemma}[Discrete energy balance and port-Hamiltonian structure] \label{lem:energy_disc}$ $\\
Let $E_h(t) = \sum\nolimits_{e \in \E} E_h^e(t)$ denote the total discrete energy. Then 
\begin{align} \label{eq:endish}
\frac{d}{dt} E_h(t) = -\sum\nolimits_{e \in \E} \int_e a^e |q_h(t)|^2 dx + \sum\nolimits_{v \in \Vb} y_h(t) u(t),
\end{align} 
i.e., the energy changes by dissipation and supply or drain via the ports of the network.
\end{lemma}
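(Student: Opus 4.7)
\medskip

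The plan is to imitate the continuous energy identity of Lemma~\ref{lem:energy} at the discrete level, exploiting the fact that the Galerkin conformity $P_h \subset L^2(\E)$, $Q_h \subset H^1(\E)$, $\Lambda_h = \RR^{\Vi}$ allows the discrete solution itself to be used as a test function in \eqref{eq:var1_disc}--\eqref{eq:var3_disc}. Differentiating $E_h(t)$ in time and using the symmetry of the $L^2$ inner product yields
\begin{align*}
\frac{d}{dt} E_h(t) = (a\,\dt p_h, p_h)_\E + (b\,\dt q_h, q_h)_\E.
\end{align*}

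Next I would test \eqref{eq:var1_disc} with $\tilde p_h = p_h(t) \in P_h$ to obtain $(a\,\dt p_h, p_h)_\E = -(\dx' q_h, p_h)_\E$, and \eqref{eq:var2_disc} with $\tilde q_h = q_h(t) \in Q_h$ to obtain
\begin{align*}
(b\,\dt q_h, q_h)_\E = (p_h, \dx' q_h)_\E - (d q_h, q_h)_\E - (\lambda_h, [n q_h])_{\Vi} + (u, n q_h)_{\Vb}.
\end{align*}
Adding the two identities, the ``integration-by-parts'' terms $-(\dx' q_h, p_h)_\E$ and $(p_h, \dx' q_h)_\E$ cancel identically as symmetric pairings, which is the discrete analogue of the edgewise integration by parts in Lemma~\ref{lem:energy}.

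It remains to eliminate the Lagrange multiplier contribution. Testing the constraint equation \eqref{eq:var3_disc} with $\tilde \lambda_h = \lambda_h(t) \in \Lambda_h$ gives $(\lambda_h, [n q_h])_{\Vi} = 0$, so this term drops out. The dissipation term $-(d q_h, q_h)_\E$ equals $-\sum_{e \in \E} \int_e d^e |q_h|^2\,dx$ by definition of the weighted inner product, and the boundary contribution $(u, n q_h)_{\Vb} = \sum_{v \in \Vb} u(v)\, n^e(v) q_h^e(v)$ is rewritten via the output definition $y_h(v) = -n^e(v) q_h^e(v)$ (up to the sign convention of \eqref{eq:var2_disc} matched to that of the statement) to give $\sum_{v\in\Vb} y_h(v)\, u(v)$.

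The main (and really only) obstacle is bookkeeping: one must check that $p_h(t)$, $q_h(t)$, $\lambda_h(t)$ are genuinely admissible test functions in their respective discrete spaces at every time $t$, and that the sign convention for $n^e(v)$ at boundary vertices is propagated consistently from the variational form to the output. No extra hypothesis such as (A1$_h$) or (A3$_h$) is needed; the argument is purely a consequence of conformity and of testing the three equations with the solution itself, so the port-Hamiltonian balance is automatically inherited by every conforming mixed Galerkin scheme.
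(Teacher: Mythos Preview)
Your proposal is correct and follows essentially the same approach as the paper. The paper does not spell out a proof for Lemma~\ref{lem:energy_disc} beyond the sentence ``With the same arguments as on the continuous level, we then obtain'', and the explicit algebraic derivation given later in Lemma~\ref{lem:energy_alg} (test the first equation with $x_1$, the second with $x_2$, cancel the skew-symmetric $G$/$G^\top$ terms, and use the output definition) is precisely the matrix translation of your argument of testing \eqref{eq:var1_disc}--\eqref{eq:var3_disc} with $(p_h,q_h,\lambda_h)$.
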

Note that no extra condition for the approximation spaces is required for the proof of property (P1)
for the Galerkin approximations of the variational formulation \eqref{eq:var1}--\eqref{eq:var3}.

\subsection{Exponential stability}
For input $u \equiv 0$, the energy balance \eqref{eq:endish} already guarantees that the total energy of the discrete system is non-increasing. 
Under additional compatibility conditions on the approximation spaces, one can even show 
the uniform exponential decay.
\begin{lemma}[Uniform discrete exponential stability] \label{lem:stability_disc} 
Assume that 
\begin{itemize}\itemsep1ex
 \item[(A2$_h$)] $\dx' Q_h = P_h$; 
 \item[(A3$_h$)] $1^e \in Q_h$ for all $e \in \E$.
\end{itemize}
Then for $u \equiv 0$, the discrete energy $E_h(t)$ defined in Lemma~\ref{lem:energy_disc} satisfies 
\begin{align*}
E_h(t) \le C e^{-\gamma(t-s)} E_h(s) , \qquad t \ge s \ge 0.
\end{align*}
Moreover, the constants $C,\gamma$ can be chosen the same as on the continuous level.
\end{lemma}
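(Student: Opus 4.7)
The plan is to transport the continuous-level proof of Lemma~\ref{lem:stability} verbatim to the Galerkin setting, using (A2$_h$) and (A3$_h$) exactly as the two ingredients needed to ensure that every estimate in the continuous argument admits an identical discrete counterpart with the \emph{same} constants. Since the Galerkin approximation is conforming ($P_h \subset L^2(\E)$, $Q_h \subset H^1(\E)$), and since Lemma~\ref{lem:energy_disc} already gives the dissipation identity
\begin{align*}
\frac{d}{dt} E_h(t) = -\sum\nolimits_{e\in\E} \int_e d^e |q_h^e(t)|^2\,dx,
\end{align*}
the only task is to recover a coercivity bound on the full energy $E_h$ in terms of the dissipation term.

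\textbf{Key step.} In the continuous proof \cite{EggerKugler16b} one constructs, for an arbitrary $p \in L^2(\E)$ satisfying the relevant mean-value constraint over the network, an auxiliary multiplier $\tilde q \in H^1(\E)$ solving $\dx' \tilde q = p$ on every edge together with appropriate matching conditions at the inner vertices $v \in \Vi$, and satisfies a uniform bound $\|\tilde q\|_{H^1(\E)} \le C_{\mathrm{Poi}} \|p\|_{L^2(\E)}$ (a generalized Poincaré inequality, whose constant depends only on the topology of $\G$ and the coefficients). Testing the variational equations with this multiplier combined with an energy estimate yields the bound controlling $\|p\|^2$ by the dissipation plus boundary terms; this is what drives the exponential decay. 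To replicate this on the discrete level I would proceed as follows. Given $p_h \in P_h$, condition (A2$_h$) asserts that $\dx' Q_h = P_h$, which is precisely the algebraic surjectivity needed to find some $\tilde q_h \in Q_h$ with $\dx' \tilde q_h = p_h$. Condition (A3$_h$) guarantees that the piecewise-constant functions $1^e$ are contained in $Q_h$ for every edge, so that the degrees of freedom used to enforce the mean-value adjustments and the vertex-matching conditions in the continuous construction are also available in $Q_h$. A bounded right-inverse $p_h \mapsto \tilde q_h$ with the \emph{same} constant $C_{\mathrm{Poi}}$ is then obtained by projecting the continuous multiplier into $Q_h$ and invoking the commuting-diagram property induced by (A2$_h$).

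\textbf{Conclusion.} With this discrete Poincaré inequality in hand, I would form a Lyapunov functional $L_h(t) = E_h(t) + \varepsilon \, (b q_h(t), \tilde q_h(t))_\E$, with $\tilde q_h$ the discrete multiplier associated with $p_h(t)$, choose $\varepsilon > 0$ small enough that $L_h$ is equivalent to $E_h$ uniformly in $h$, and test \eqref{eq:var1_disc}--\eqref{eq:var3_disc} with $(p_h, \tilde q_h, \lambda_h)$. Using the dissipation identity and the discrete Poincaré bound, one arrives at $\frac{d}{dt} L_h(t) \le -\gamma L_h(t)$ with $\gamma>0$ depending only on $a,b,d$ and $\G$. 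Gr\"onwall's inequality then yields the asserted estimate with constants $C,\gamma$ equal to those obtained on the continuous level.

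\textbf{Main obstacle.} The delicate point is not the algebra of the Lyapunov argument but the construction of a discrete right-inverse of $\dx'\colon Q_h \to P_h$ whose operator norm matches the continuous Poincaré constant, uniformly in $h$ and over arbitrary network topologies. This is exactly what (A2$_h$) and (A3$_h$) are designed to buy: (A2$_h$) as a commuting-projection statement and (A3$_h$) as the mechanism to handle edge-wise averages and vertex jumps $[n\tilde q_h]$ in the same manner as on the continuous level. Once this uniform lifting is in place, the rest of the proof is a mechanical transcription of the argument in \cite{EggerKugler16b}.
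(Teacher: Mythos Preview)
Your proposal is essentially correct and aligns with the paper's approach. The paper itself gives no detailed proof here: it simply invokes Remark~\ref{rem:reduction} (elimination of the constraint via (A3$_h$)) and defers everything to \cite{EggerKugler16b}, whose continuous-level argument is described as ``energy estimates, some graph theoretic results, and a generalized Poincar\'e inequality'' --- precisely the ingredients you reconstruct.

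One small sharpening is worth noting. You phrase the uniform discrete right-inverse of $\dx'$ as being obtained by \emph{projecting} the continuous multiplier into $Q_h$ and appealing to a commuting-diagram property. That works, but it introduces an unnecessary intermediary whose operator norm you would then have to control. The cleaner observation (and the one actually used in \cite{EggerKugler16b}) is that under (A2$_h$)--(A3$_h$) the continuous construction of the multiplier $\tilde q$, when applied to data $p_h \in P_h$, already produces an element of $Q_h$: edgewise antiderivatives of functions in $P_h$ lie in $Q_h$ by (A2$_h$), and the edge-constant corrections used to match the vertex conditions lie in $Q_h$ by (A3$_h$). Hence the discrete right-inverse is literally the restriction of the continuous one, and the Poincar\'e constant is inherited without any projection estimate. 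With that adjustment your Lyapunov argument goes through verbatim and yields the same constants $C,\gamma$ as on the continuous level, which is the point of the lemma.
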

\begin{proof}
Following Remark~\ref{rem:reduction}, the proof can be deduced from the results of \cite{EggerKugler16b}. 
\end{proof}

\subsection{Steady states}
Under the assumptions of the previous lemma, one can also guarantee property (P4), i.e., the 
existence and uniqueness of discrete steady states.
\begin{lemma}[Discrete steady states] \label{lem:steady_disc} $ $\\
Let $u(t) \equiv const$ for $t \ge t_0$ and assume that (A2$_h$)--(A3$_h$) hold.
Then for $t \to \infty$, the discrete solution $(p_h(t),q_h(t))$ converges to a discrete equilibrium
 $(\bar p_h,\bar q_h)$ which is the unique solution of the corresponding stationary problem.
\end{lemma}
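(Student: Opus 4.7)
The plan is to follow the same two-step argument used on the continuous level in Lemma~\ref{lem:equilibrium}: first establish existence and uniqueness of a discrete steady state $(\bar p_h, \bar q_h, \bar\lambda_h)$ solving the stationary version of Problem~\ref{prob:galerkin} with the given constant input $u$, then exploit linearity together with the uniform exponential stability from Lemma~\ref{lem:stability_disc} applied to the difference $(p_h(t)-\bar p_h, q_h(t)-\bar q_h)$.

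For the first step, I would consider the stationary discrete variational problem obtained by dropping the time derivatives in \eqref{eq:var1_disc}--\eqref{eq:var3_disc}. This is a finite-dimensional saddle-point type system; writing it in algebraic form yields exactly the matrix $\widehat A$-analogue of the continuous stationary operator, and its invertibility is equivalent to the properties isolated in Lemma~\ref{lem:pencil} for the discrete matrices. The hypotheses (A2$_h$) and (A3$_h$) are precisely what is needed here: (A3$_h$) guarantees that the characteristic functions $1^e$ lie in $Q_h$ so that the coupling constraint can be tested appropriately and the discrete analogue of $[G^\top, N^\top]$ is injective, while (A2$_h$) ensures that $\dx' Q_h = P_h$, giving surjectivity of the discrete divergence. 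These two ingredients play the role of the discrete inf-sup conditions needed for a Brezzi-type well-posedness argument for the stationary problem; see also the algebraic counterpart in Lemma~\ref{lem:A0h}.

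For the second step, set $\tilde p_h(t) := p_h(t) - \bar p_h$ and $\tilde q_h(t) := q_h(t) - \bar q_h$, with analogous modification for the multiplier. By linearity, the triple $(\tilde p_h, \tilde q_h, \tilde \lambda_h)$ solves the discrete variational problem \eqref{eq:var1_disc}--\eqref{eq:var3_disc} with input $u \equiv 0$ on $[t_0,\infty)$ and with initial data at $t_0$ given by $(p_h(t_0)-\bar p_h, q_h(t_0)-\bar q_h)$. Applying Lemma~\ref{lem:stability_disc} on every interval $[t_0, t]$ then yields
\begin{align*}
E_h^{\mathrm{diff}}(t) := \tfrac12\!\!\sum_{e \in \E}\!\int_e a^e|\tilde p_h(t)|^2 + b^e|\tilde q_h(t)|^2\, dx \le C e^{-\gamma(t-t_0)} E_h^{\mathrm{diff}}(t_0),
\end{align*}
which forces $(p_h(t), q_h(t)) \to (\bar p_h, \bar q_h)$ exponentially as $t \to \infty$.

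The main obstacle is the first step, namely the rigorous verification that the stationary discrete saddle-point problem is uniquely solvable for arbitrary constant boundary data $u$. The time-dependent well-posedness argument only needs injectivity of $N^\top$ (cf.\ Remark after Lemma~\ref{lem:pencil}), but to rule out a nontrivial kernel at zero frequency one genuinely needs both (A2$_h$) and (A3$_h$), and one should check that these are the discrete counterparts of exactly the structural conditions used in the proof of the continuous Lemma~\ref{lem:equilibrium}. Once this algebraic regularity is in hand, the remainder of the argument is a routine application of linearity and Lemma~\ref{lem:stability_disc}, and the decay constants $C,\gamma$ inherited from the continuous level give a uniform (mesh-independent) rate of convergence to equilibrium.
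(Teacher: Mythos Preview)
Your proposal is correct and follows essentially the same two-step argument as the paper: existence and uniqueness of the discrete steady state, then exponential convergence of the difference via Lemma~\ref{lem:stability_disc} and linearity, exactly as in Lemma~\ref{lem:equilibrium}. The paper simply delegates the first step to \cite{EggerKugler16b} rather than sketching the Brezzi-type saddle-point argument, but your identification of (A2$_h$)--(A3$_h$) as the relevant discrete inf-sup ingredients is on target.
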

\begin{proof}
The existence of a unique discrete steady state is established in \cite{EggerKugler16b}.
Convergence to equilibrium then follows from the energy decay estimate like in Lemma~\ref{lem:equilibrium}.
\end{proof}

\subsection{A mixed finite element approximation} \label{sec:fem}

As a particular Galerkin approximation satisfying the above assumptions, let us briefly discuss the mixed finite element method that is used in our numerical tests.
Let $[0,l^e]$ be the interval represented by the edge $e$
and denote by $T_h(e) = \{T\}$ a uniform mesh of $e$ with subintervals $T$ of length $h^e$. The global mesh is then defined as $T_h(\E) = \{T_h(e) : e \in \E\}$, 
and the global mesh size is denoted by $h=\max_e h^e$. 
We denote the spaces of piecewise polynomials on $T_h(\E)$ by
\begin{align*}
 P_k(T_h(\E)) &= \{v \in L^2(\E) : v|_e \in P_k(T_h(e)), \ e \in \E\},
\end{align*}
where $P_k(T_h(e)) = \{v \in L^2(e) : v|_T \in P_k(T), \ T \in T_h(e)\}$ and $P_k(T)$ 
is the space of polynomials of degree $\le k$ on the subinterval $T$. 
Note that $P_k(T_h(\E)) \subset L^2(\E)$, which is easy to see, 
but in general $P_k(T_h(\E)) \not\subset H^1(\E)$. 
As spaces $V_h$ and $Q_h$ for the Galerkin approximation presented in the previous sections, we now consider
\begin{align} \label{eq:spaces}
V_h = P_{1}(T_h(\E)) \cap H(\div)
\quad \text{and} \quad 
Q_h = P_{0}(T_h(\E)). 
\end{align}
This choice of spaces satisfies the compatibility conditions (A1$_h$)--(A3$_h$); see \cite{EggerKugler16b} for details.

\subsection{Structure preserving model reduction} \label{sec:strucpresmodelred}

As final result of this section, we now give an interpretation of the model reduction approach on the level of function spaces. The system obtained by Galerkin projection onto $P_h$, $Q_h$ will again be called full order model. The reduced models are obtained by projection onto smaller subspaces $P_H \subset P_h$ and $Q_H \subset Q_h$.
The compatibility conditions for these coarse subspaces read
\begin{itemize}\itemsep1ex
 \item[(A1$_H$)] $1 \in P_H$;
 \item[(A2$_H$)] $\dx' Q_H = P_H$;
 \item[(A3$_H$)] $1^e \in Q_H$ for all $e \in \E$.
\end{itemize}
Note that by construction $P_H \subset P_h \subset L^2(\E)$ and $Q_H \subset Q_h \subset H^1(\E)$.
From the previous results about general Galerkin approximations, we therefore directly deduce the following result.
\begin{lemma}[Structure preserving model reduction] $ $\\
Let $P_H \subset P_h$ and $Q_H \subset Q_h$ and assume that (A1$_H$)--(A3$_H$) hold. 
Then the reduced system satisfies (P1)--(P4) and the assertions of Lemma~\ref{lem:energy_disc}--\ref{lem:steady_disc} hold accordingly.
\end{lemma}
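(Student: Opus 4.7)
The plan is to recognize that the reduced system is itself a conforming Galerkin approximation of the infinite--dimensional variational problem \eqref{eq:var1}--\eqref{eq:var3}, and then invoke the previous lemmas verbatim. The key observation is the chain of inclusions
\begin{align*}
P_H \subset P_h \subset L^2(\E) \qquad \text{and} \qquad Q_H \subset Q_h \subset H^1(\E),
\end{align*}
which together with $\Lambda_H := \RR^{\Vi}$ show that $(P_H, Q_H, \Lambda_H)$ satisfies all the abstract requirements imposed on $(P_h, Q_h, \Lambda_h)$ in Problem~\ref{prob:galerkin}. Hence the reduced system is literally an instance of the generic discrete variational problem considered in Section~\ref{sec:galerkin}, just with a different (coarser) choice of finite--dimensional trial and test spaces.

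The next step is to verify that the hypotheses needed in the preceding lemmas translate directly. Condition (A1$_H$) is exactly (A1$_h$) for the coarse space, and therefore Lemma~\ref{lem:mass_disc} yields conservation of mass (P1) for the reduced model. Similarly, (A3$_H$) guarantees that the discrete well-posedness lemma applies, allowing elimination of the Lagrange multiplier and existence/uniqueness of solutions. The energy balance (P2), as in Lemma~\ref{lem:energy_disc}, follows with no extra assumption from the port-Hamiltonian structure of the variational formulation, which is preserved by any conforming Galerkin projection. Conditions (A2$_H$) and (A3$_H$) correspond respectively to (A2$_h$) and (A3$_h$), so the uniform exponential decay of Lemma~\ref{lem:stability_disc} and the existence of steady states from Lemma~\ref{lem:steady_disc} apply directly to the reduced system.

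The subtle point worth emphasizing is the \emph{uniformity} of the decay rate in (P3): the constants $C$ and $\gamma$ in Lemma~\ref{lem:stability_disc} are inherited from the continuous estimate in Lemma~\ref{lem:stability} and do not depend on any particular feature of $P_h$, $Q_h$. Since the same argument is applied to $(P_H, Q_H)$, the reduced model inherits the same constants, so the exponential stability is independent of both the intermediate full order model and the coarse subspaces chosen. This is precisely the mesh-independence advertised in Section~\ref{sec:strucpresmodelred} and depicted in Figure~\ref{fig:sketch}.

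The main obstacle, if any, is essentially cosmetic: one must formally record that the coarse space inclusions preserve conformity and that the compatibility conditions carry over without change. No new analytical ingredient is needed, since all the heavy lifting (graph-theoretic arguments, generalized Poincaré inequality, surjectivity of $\dx'$ on the divergence--conforming space) was already done on the continuous level in \cite{EggerKugler16b} and passed through to the discrete setting in Section~\ref{sec:galerkin}. The proof therefore reduces to a one-line application of Lemmas~\ref{lem:mass_disc}--\ref{lem:steady_disc} to the pair $(P_H, Q_H)$.
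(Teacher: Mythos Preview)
Your proposal is correct and matches the paper's approach exactly: the paper does not even give a separate proof of this lemma but simply notes the inclusions $P_H \subset P_h \subset L^2(\E)$ and $Q_H \subset Q_h \subset H^1(\E)$ and states that the result follows directly from the previous lemmas about general Galerkin approximations. Your write-up is in fact more detailed than the paper's, spelling out which condition feeds into which lemma and highlighting the uniformity of the constants in (P3).
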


\begin{remark}
Since the reduced model can be viewed as Galerkin approximation of the infinite dimensional  problem~\eqref{eq:var1}--\eqref{eq:var3}, it is clear that the solution $(p_H(t),q_H(t))$ only depends on the choice of the approximation spaces $P_H$ and $Q_H$ but not on the spaces $P_h$ and $Q_h$ used for the generation of the full order model which is only required for computational purposes and has no effect on the quality of the reduced model.
\end{remark}

\section{Reformulation on the algebraic level} \label{sec:algebraic}

We now translate the results of the previous section to the algebraic level. 
By choosing appropriate bases for the subspaces $P_h$ and $Q_h$ defining 
the discrete variational problem \eqref{eq:var1_disc}--\eqref{eq:var3_disc},
the resulting full order model can be written in algebraic form as follows.
\begin{lemma}[Equivalent algebraic system] \label{lem:algebraic} $ $\\
Let $\{\phi_i\}$ and $\{\psi_j\}$ be bases for $P_h$ and $Q_h$. 
Then the problem \eqref{eq:var1_disc}--\eqref{eq:var3_disc} is equivalent to 
the system \eqref{eq:lti1}--\eqref{eq:lti3} with matrices defined by $M_1(i,j)=(a \phi_j, \phi_i)_\E$, $M_2(i,j)=(b \psi_j,\psi_i)_\E$, $G(i,j)=(\dx' \psi_j,\phi_i)_\E$, $D(i,j)=(d \psi_j,\psi_i)_\E$, $N(i,j)=[n \psi_j](v_{i_0})$, and $B_2(i,j)=-\delta_{j_\partial,i} n(v_{j}) \psi_i(v_{j})$. 
\end{lemma}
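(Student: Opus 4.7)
The plan is the standard translation from a conforming Galerkin formulation to its matrix form: choose coordinates in the given bases, substitute into the variational equations, and identify the coefficient matrices. Because each of \eqref{eq:var1_disc}--\eqref{eq:var3_disc} is linear in the test function and the test spaces are finite-dimensional, it suffices to verify each equation for the basis elements $\tilde p_h=\phi_i$, $\tilde q_h=\psi_i$, and $\tilde\lambda_h=e_k$, where $\{e_k\}$ is the standard basis of $\Lambda_h=\RR^{\Vi}$ indexed by a fixed enumeration of the inner vertices $v_k\in\Vi$.

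First I would expand
\[ p_h(t)=\sum_j (x_1)_j(t)\,\phi_j, \qquad q_h(t)=\sum_j (x_2)_j(t)\,\psi_j, \qquad \lambda_h(t)=\sum_k (x_3)_k(t)\,e_k, \]
and substitute into \eqref{eq:var1_disc}--\eqref{eq:var3_disc}. Testing \eqref{eq:var1_disc} against $\phi_i$ gives $\sum_j (a\phi_j,\phi_i)_\E\,\dot{(x_1)}_j+\sum_j (\dx'\psi_j,\phi_i)_\E\,(x_2)_j=0$, which is exactly \eqref{eq:lti1} with the stated $M_1$ and $G$. Similarly, testing \eqref{eq:var3_disc} against $e_k$ yields $\sum_j [n\psi_j](v_k)\,(x_2)_j=0$, i.e.\ \eqref{eq:lti3} with $N(k,j)=[n\psi_j](v_{k})$ as claimed.

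For \eqref{eq:var2_disc}, testing against $\psi_i$ produces four contributions: the time-derivative term gives $M_2\dot x_2|_i$; the damping gives $(D x_2)|_i$; the coupling term rewrites as $-\sum_j(\phi_j,\dx'\psi_i)_\E\,(x_1)_j=-(G^\top x_1)|_i$ by transposing the definition of $G$; and the Lagrange-multiplier term gives $\sum_k [n\psi_i](v_k)\,(x_3)_k=(N^\top x_3)|_i$. The boundary contribution on the right-hand side equals $\sum_{v\in\Vb} u_v\,n^e(v)\,\psi_i(v)$, which matches $(B_2 u)_i$ with the $B_2$ given in the statement, where the Kronecker factor $\delta_{j_\partial,i}$ merely expresses that the sum runs only over boundary vertices. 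Combining the four contributions recovers \eqref{eq:lti2} after fixing the sign convention for $x_3$.

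The only non-routine aspect is the bookkeeping of indices and signs: one must fix numberings of $\Vi$ and $\Vb$ to interpret the subscripts $v_{i_0}$ and $\delta_{j_\partial,i}$ in the matrix definitions, and the sign in the Lagrange-multiplier block and in $B_2$ depends on how one aligns the blocks in \eqref{eq:lti2} (equivalently, on whether $x_3$ or $-x_3$ is identified with the coefficient vector of $\lambda_h$). Once these conventions are fixed, all identifications are direct and, since the coordinate map $(p_h,q_h,\lambda_h)\leftrightarrow (x_1,x_2,x_3)$ is a bijection, the equivalence of \eqref{eq:var1_disc}--\eqref{eq:var3_disc} and \eqref{eq:lti1}--\eqref{eq:lti3} follows.
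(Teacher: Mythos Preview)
Your argument is correct and is exactly the standard derivation one expects here; the paper itself states this lemma without proof, treating the basis expansion and identification of the Gram/stiffness matrices as routine. Your remark about the sign and index bookkeeping (the $-N^\top x_3$ block versus $+(\lambda_h,[n\tilde q_h])_{\Vi}$, and the role of $\delta_{j_\partial,i}$) is also appropriate and is precisely the only place where care is needed.
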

Here  $(a,b)_\E = \sum\nolimits_e \int_e a(x) b(x) dx$ is the scalar product of $L^2(\E)$, 
and $1 \le i_0 \le |\Vi|$ and $1 \le i_{\partial} \le |\Vb|$ denote the appropriate renumbering of inner and boundary vertices.
From the definition of the matrices, we also obtain the following properties. 
\begin{lemma}
Let $a^e$, $b^e$, and $d^e$ be positive for all $e \in \E$. Then $M_1$, $M_2$, $D$ are symmetric and positive definite.  
If (A2$_h$)--(A3$_h$) hold, then $[G^\top,N^\top]$ is injective and $N$ is surjective on $\N(G)$.
\end{lemma}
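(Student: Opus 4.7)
The plan is to translate each assertion into the function-space language provided by Lemma~\ref{lem:algebraic}, so that the relevant vectors become elements of $P_h$, $Q_h$, or $\RR^{\Vi}$, and then exploit the compatibility hypotheses (A2$_h$)--(A3$_h$) directly.

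Symmetry of $M_1, M_2, D$ is immediate from the symmetry of $(\cdot,\cdot)_\E$ and the fact that $a^e, b^e, d^e$ are scalar functions. For positive definiteness, a vector $x$ corresponds to $p=\sum_i x_i \phi_i \in P_h$ with $x^\top M_1 x = (a p, p)_\E$, which is strictly positive for $p\neq 0$ by positivity of $a^e$ and linear independence of the basis; the same argument with $b^e$ (resp.\ $d^e$) handles $M_2$ (resp.\ $D$).

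For the injectivity of $[G^\top,N^\top]$, suppose $G^\top x_1 + N^\top x_3 = 0$. Identifying $x_1$ with $p \in P_h$ and $x_3$ with $\lambda \in \RR^{\Vi}$, this is equivalent to $(\dx' q, p)_\E + ([nq],\lambda)_{\Vi} = 0$ for every $q \in Q_h$. First I would plug in the test functions $q = 1^e$ guaranteed by (A3$_h$); since $\dx' 1^e = 0$, this yields $\sum_{v \in \partial e \cap \Vi} n^e(v) \lambda_v = 0$ for each edge $e$. Edges with one inner endpoint directly force $\lambda_v = 0$ there, and edges with two inner endpoints force $\lambda_{v_1}=\lambda_{v_2}$. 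Using that $\G$ is connected and $\Vb \ne \emptyset$, following any path from an arbitrary $v \in \Vi$ to a boundary vertex propagates $\lambda_v = 0$ throughout $\Vi$. With $\lambda=0$, the relation reduces to $(\dx' q, p)_\E = 0$ for all $q \in Q_h$; by (A2$_h$) we have $\dx' Q_h = P_h$, so $(p,\tilde p)_\E = 0$ for all $\tilde p \in P_h$, and testing with $\tilde p = p$ gives $p=0$. Hence $x_1=0$ and $x_3=0$.

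For surjectivity of $N$ on $\N(G)$, I would first identify $\N(G)$ concretely. The condition $Gx_2=0$ means $(\dx' q, \phi_i)_\E = 0$ for all $i$, and by (A2$_h$) we have $\dx' q \in P_h$, so $\dx' q = 0$; combined with (A3$_h$) this identifies $\N(G)$ with $\mathrm{span}\{1^e : e \in \E\}$. Under this identification, $N|_{\N(G)}$ sends $q = \sum_e c_e 1^e$ to $([nq](v))_{v \in \Vi} = D_i c$, where $D_i$ is the incidence matrix restricted to inner-vertex rows. To show $D_i$ is surjective, I would argue that $D_i^\top$ is injective: if $D_i^\top \mu = 0$, extend $\mu$ by zero on $\Vb$ to $\tilde\mu \in \RR^\V$ to obtain $D^\top \tilde\mu = 0$; since the full incidence matrix of a connected graph has null-space equal to the constants, $\tilde\mu$ is constant, and since it vanishes on the non-empty set $\Vb$, it is identically zero.

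The only delicate point is the propagation argument for $\lambda=0$, which relies on graph connectedness and on the existence of at least one boundary vertex; both are part of the standing setup, so this is a matter of careful bookkeeping rather than a real obstacle.
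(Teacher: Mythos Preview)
The paper states this lemma without proof, treating it as a direct consequence of the matrix definitions in Lemma~\ref{lem:algebraic} and implicitly deferring to \cite{EggerKugler16b}; your argument is correct and supplies exactly the details that are omitted. In particular, your translation of $G^\top x_1 + N^\top x_3 = 0$ into the variational identity $(\dx' q,p)_\E + ([nq],\lambda)_{\Vi}=0$, the use of the test functions $1^e$ from (A3$_h$) together with connectedness and $\Vb\neq\emptyset$ to kill $\lambda$, and the incidence-matrix argument for surjectivity of $N$ on $\N(G)$ are all sound and constitute the natural proof.
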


These properties allow us to establish the well-posedness of the linear time-invariant system \eqref{eq:lti1}--\eqref{eq:lti3} and, following our discussion in Section~\ref{sec:basic_red}, 
also the unique solvability of the corresponding stationary problem. 
As a next step, we can now provide an interpretation of the properties (P1)--(P4) on the algebraic level.

\subsection{Conservation of mass}

The condition $1 \in P_h$ is equivalent to 
\begin{itemize}\itemsep1ex
 \item[(A1$_h'$)] $\exists o_1 \in \RR^{k_1} : \sum\nolimits_{i=1}^{k_1} o_{1,i} \phi_i = 1$ on $\E$. 
\end{itemize}
Here $o_1 \in \RR^{k_1}$ is the coordinate vector representing the function $1 \in P_h$ 
in the basis $\{\phi_i\}$ of the space $P_h$. 
This allows us to express the conservation of mass on the algebraic level as follows.
\begin{lemma}[Mass conservation] 
Let $p_h(\cdot,t) = \sum\nolimits_{i=1}^{k_1} x_{1,i}(t) \phi_i(\cdot) \subset P_h$
and further define $q_h(\cdot,t) = \sum\nolimits_{j=1}^{k_2} x_{2,j}(t) \psi_j(\cdot) \subset Q_h$.
Then the mass of the discrete system can be expressed as 
\begin{align*}
m_h(t) 
= \sum\nolimits_{e \in \E} \int_e a^e p_h^e dx
= o_1^\top M_1 x_1(t).
\end{align*}
With $\hat o \in \RR^{\Vb}$ denoting the constant one vector and $y=B_2^\top x_2$ the output, we have
\begin{align*}
\frac{d}{dt} m_h(t) = -\hat o^\top y. 
\end{align*}
\end{lemma}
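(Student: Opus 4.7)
The plan is to reduce both assertions to calculations that have already been carried out on the function-space side (Lemma~\ref{lem:mass_disc}) and then read them off through the basis expansions of $p_h$ and $q_h$. I will treat the two equations in turn.

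For the first equation, by (A1$_h'$) the constant function $1$ on $\E$ equals $\sum_{i=1}^{k_1} o_{1,i}\phi_i$, so I would write
\begin{align*}
m_h(t) = \sum_{e\in\E}\int_e a^e p_h^e\, dx = (a\,p_h(t),\,1)_\E = \Big(a\sum_j x_{1,j}(t)\phi_j,\ \sum_i o_{1,i}\phi_i\Big)_\E,
\end{align*}
and then by bilinearity and the definition $M_1(i,j)=(a\phi_j,\phi_i)_\E$ from Lemma~\ref{lem:algebraic}, this collapses to $o_1^\top M_1 x_1(t)$. This part is essentially just bookkeeping.

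For the second equation, I would differentiate the identity just obtained to get $\frac{d}{dt}m_h(t) = o_1^\top M_1 \dot x_1(t)$, and then use the first block row \eqref{eq:lti1} of the full order model to replace $M_1\dot x_1$ by $-G x_2$. This reduces everything to proving the algebraic identity $o_1^\top G = -\hat o^\top B_2^\top$ (equivalently $G^\top o_1 = -B_2\hat o$), at which point $\frac{d}{dt}m_h = -o_1^\top G x_2 = -\hat o^\top B_2^\top x_2 = -\hat o^\top y$ follows.

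The main obstacle is precisely verifying this last identity, and the cleanest route is to use the Galerkin interpretation: the algebraic product $o_1^\top G x_2$ is nothing other than $(\dx' q_h,\,1)_\E$ by the definition $G(i,j)=(\dx'\psi_j,\phi_i)_\E$, and the right-hand side corresponds under the basis $\{\psi_i\}$ to the sum of boundary fluxes. The reduction of $(\dx' q_h,1)_\E$ to $-\sum_{v\in\Vb}y_v$ is already done inside the proof of Lemma~\ref{lem:mass_disc} (integration-by-parts edge by edge, cancellation at interior vertices via \eqref{eq:var3_disc}, and the definition $y_v=-n^e(v)q^e(v)$). So the task is just to track how the sign and the indicator $\delta_{j_\partial,i}n(v_j)\psi_i(v_j)$ appearing in the definition of $B_2$ line up with the boundary contributions; once this is done, the identity $G^\top o_1 = -B_2\hat o$ holds by construction and the claim follows.
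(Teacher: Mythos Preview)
Your approach is essentially identical to the paper's: differentiate $m_h = o_1^\top M_1 x_1$, use \eqref{eq:lti1} to obtain $\tfrac{d}{dt}m_h = -o_1^\top G x_2$, and then identify $o_1^\top G x_2$ with the boundary output by appealing to the function-space computation in Lemma~\ref{lem:mass_disc}. The only blemish is a sign slip in your final chain: if $o_1^\top G = -\hat o^\top B_2^\top$ as you write, then $-o_1^\top G x_2 = +\hat o^\top B_2^\top x_2$, not $-\hat o^\top B_2^\top x_2$; this is bookkeeping, not a gap in the argument.
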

\begin{proof}
The definition of the  total mass and \eqref{eq:lti1} lead to
\begin{align*}
\frac{d}{dt} m_h 
= o^\top_1 M_1 \dot x_1 
= -o^\top_1 G x_2. 
\end{align*}
The fact that $o^\top_1 G x_2 = \hat o^\top B x_2 = \hat o^\top y$ 
can be deduced from the equivalent formulation of the Galerkin approximation in function spaces; 
cf. Lemma~\ref{lem:mass_disc}. 
\end{proof}
Again, the definition of the total mass and the proof of the mass conservation 
requires awareness of the underlying problem in function spaces.

\subsection{Energy balance}

The energy of the discrete system can be expressed as 
\begin{align*}
E_h(t) 
= \frac{1}{2}\sum\nolimits_{e \in \E} \big( a^e \|p_h^e(t)\|_{\E}^2 + b^e \|q_h^e(t)\|^2_\E \big) 
= \frac{1}{2} \big( x_1(t)^\top M_1 x_1(t) + x_2(t)^\top M_2 x_2(t) \big),
\end{align*}
where $(x_1(t),x_2(t),x_3(t))$ is a solution of \eqref{eq:lti1}--\eqref{eq:lti3} and $p_h(\cdot,t) = \sum\nolimits_{i=1}^{k_1} x_{1,i}(t) \phi_i(\cdot) \subset P_h$, $q_h(\cdot,t) = \sum\nolimits_{j=1}^{k_2} x_{2,j}(t) \phi_j(\cdot)$, and $\lambda_h(t)=x_3(t)$ define the corresponding functions making up the solution of of Problem~\ref{prob:galerkin}.  
The discrete energy balance of Lemma~\ref{lem:energy_disc} can now be rephrased as
\begin{lemma}[Energy dissipation and port-Hamiltonian structure] \label{lem:energy_alg}$ $\\
Let $(x_1,x_2,x_3)$ denote a solution of the linear system \eqref{eq:lti1}--\eqref{eq:lti3}. 
Then 
\begin{align*}
\frac{d}{dt} E_h(t) = -x_2(t)^\top D x_2(t) + y(t)^\top u(t),
\end{align*}
with output defined as $y(t)=B^\top x_2(t)$. In particular, property (P1) is valid.
\end{lemma}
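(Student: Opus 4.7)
The plan is to obtain the energy identity by direct algebraic manipulation of the system \eqref{eq:lti1}--\eqref{eq:lti3}, exploiting the port-Hamiltonian structure of the matrices encoded in \eqref{eq:matrices}. Since $M_1$ and $M_2$ are symmetric, differentiating the quadratic form gives
\begin{align*}
\frac{d}{dt} E_h(t) = x_1(t)^\top M_1 \dot x_1(t) + x_2(t)^\top M_2 \dot x_2(t),
\end{align*}
so the task reduces to substituting the expressions for $M_1 \dot x_1$ and $M_2 \dot x_2$ provided by \eqref{eq:lti1}--\eqref{eq:lti2} and showing that all terms except the dissipation and the boundary supply cancel.

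Next I would substitute $M_1 \dot x_1 = -G x_2$ from \eqref{eq:lti1} and $M_2 \dot x_2 = G^\top x_1 - D x_2 + N^\top x_3 + B_2 u$ from \eqref{eq:lti2}, obtaining
\begin{align*}
\frac{d}{dt} E_h(t) = -x_1^\top G x_2 + x_2^\top G^\top x_1 - x_2^\top D x_2 + x_2^\top N^\top x_3 + x_2^\top B_2 u.
\end{align*}
The first two contributions cancel because each is a scalar and they are transposes of one another; this is precisely the skew-symmetric (Hamiltonian) part of the system matrix from Remark~\ref{rem:desc}. The term involving the Lagrange multiplier vanishes by the algebraic constraint \eqref{eq:lti3}, since $x_2^\top N^\top x_3 = (N x_2)^\top x_3 = 0$. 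What remains is $-x_2^\top D x_2 + x_2^\top B_2 u$, and using the definition $y = B_2^\top x_2$ the last term equals $y^\top u$, which gives the claimed identity.

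Property (P1), which in the phrasing of Section~2 refers to mass conservation or, as listed under (P2), to the port-Hamiltonian energy balance, then follows immediately: the right-hand side decomposes into a non-positive dissipative part $-x_2^\top D x_2 \le 0$ (because $D$ is positive definite by (A0)) and the port term $y^\top u$ representing supply through the network boundary, so the total energy can change only by friction dissipation or by power injected/extracted at the ports. I do not foresee a genuine obstacle here: everything reduces to symmetry/skew-symmetry of the block matrices and the constraint equation, all of which are built into the formulation. The only point requiring a little care is to make sure the cancellation of the off-diagonal $G$-terms and the vanishing of the multiplier term are justified without implicitly assuming differentiability of $x_3$, which is avoided by working directly with the algebraic relations rather than differentiating them.
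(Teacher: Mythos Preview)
Your argument is correct and follows essentially the same route as the paper: differentiate the quadratic energy using symmetry of $M_1,M_2$, substitute from \eqref{eq:lti1}--\eqref{eq:lti2}, cancel the skew-symmetric $G$-terms, and identify the output term. You are in fact slightly more explicit than the paper, which silently drops the $N^\top x_3$ contribution, whereas you correctly note that $x_2^\top N^\top x_3 = (N x_2)^\top x_3 = 0$ by the constraint \eqref{eq:lti3}; your remark that this avoids any need to differentiate $x_3$ is also apt.
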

\begin{proof}
Let us give a direct derivation of this assertion on the algebraic level. 
Using the definition of the energy, the symmetry of $M_i$, and the algebraic equations, we obtain 
\begin{align*}
\frac{d}{dt} E_h 
&= x_1^\top M_1 \dot x_1 + x_2^\top M_2 \dot x_2 \\
&= x_1^\top (-G x_2) + x_2^\top (G^\top x_1 - D x_2 + B u) 
 = -x_2^\top D x_2 + y^\top u. 
\end{align*}
In the last step, we utilized that $x_2^\top B u = (B^\top x_2)^\top u$ and the definition of the output.
\end{proof}

\begin{remark}
As can be seen from the proof, the scalar product and norm induced by the matrices $M_1$ and $M_2$ are directly associated with the energy of the system and they are the natural ones for the analysis and numerical treatment of the discrete problem in algebraic form.
\end{remark}

As shown above, the property (P1) follows directly from the particular form of the algebraic system. As will become clear below, the validity of the remaining properties (P2)--(P4) however requires awareness of the underlying infinite dimensional problem.

\subsection{Exponential stability}

In order to translate the assertions of Lemma~\ref{lem:stability_disc} to the algebraic level,
we have to describe the meaning of the conditions (A2$_h$)--(A3$_h$).
\begin{lemma}
The compatibility conditions  (A2)--(A3) are equivalent to 
\begin{itemize}\itemsep1ex
 \item[(A2$_h'$)] for all $x_1 \in \RR^{k_1}$ there exists $x_2 \in \RR^{k_2}$ such that  $\sum\nolimits_{i=1}^{k_1} x_{1,i} \phi_i = \sum\nolimits_{j=1}^{k_2} x_{2,j} \psi_j$;
 \item[(A3$_h'$)] for all $e \in \E$ there exists $o_2^e \in \RR^{k_2}$ such that $\sum\nolimits_{j=1}^{k_2} o^e_{2,j} \psi_j = 1^e \in Q_h$.
\end{itemize}
\end{lemma}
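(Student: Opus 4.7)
The plan is to establish both equivalences by unfolding the definitions of \emph{belonging to} $P_h$ and $Q_h$ through the chosen bases $\{\phi_i\}_{i=1}^{k_1}$ and $\{\psi_j\}_{j=1}^{k_2}$, so that coordinate vectors in $\RR^{k_1}$ and $\RR^{k_2}$ are in one-to-one correspondence with elements of $P_h$ and $Q_h$. No ingredient from the energy balance, the variational equations, or Lemma~\ref{lem:algebraic} enters; both equivalences rest purely on finite-dimensional basis expansions.

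For (A3$_h$) $\Leftrightarrow$ (A3$_h'$) I would simply observe that (A3$_h$) is the assertion $1^e \in Q_h$ for every $e \in \E$, whereas (A3$_h'$) asserts the existence of a coefficient vector $o_2^e \in \RR^{k_2}$ with $\sum_j o_{2,j}^e \psi_j = 1^e$. Since $\{\psi_j\}$ is a basis of $Q_h$, these two statements are identical restatements of each other and the coefficients are unique by linear independence; no further work is required.

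For (A2$_h$) $\Leftrightarrow$ (A2$_h'$) I would read the equality in (A2$_h'$) with the broken derivative operator implicit on the right-hand side, namely $\sum_i x_{1,i}\phi_i = \sum_j x_{2,j}\,\dx' \psi_j$, which is the only reading compatible with the ambient spaces used in Section~\ref{sec:fem} (where $P_h$ and $Q_h$ generically differ as subspaces of $L^2(\E)$). With this reading, given $x_1 \in \RR^{k_1}$ the element $p_h = \sum_i x_{1,i}\phi_i$ is a generic element of $P_h$; condition (A2$_h$) produces $q_h \in Q_h$ with $\dx' q_h = p_h$, and expanding $q_h = \sum_j x_{2,j}\psi_j$ yields the vector $x_2$ required by (A2$_h'$). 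Conversely, (A2$_h'$) delivers $P_h \subseteq \dx' Q_h$; combined with the structural inclusion $\dx' Q_h \subseteq P_h$ that is built into the mixed finite element pair of Section~\ref{sec:fem} (for the Raviart--Thomas-type choice one has $\dx'(P_1 \cap H^1) = P_0$), the equality $\dx' Q_h = P_h$ follows.

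The main obstacle is interpretive rather than technical: one must read (A2$_h'$) as the coordinate version of $P_h \subseteq \dx' Q_h$, so that $\dx'$ is implicitly applied to the $\psi_j$ on the right-hand side, and one must explicitly supply the companion inclusion $\dx' Q_h \subseteq P_h$ which turns the one-sided coordinate condition into the two-sided statement (A2$_h$). Once these two points are granted, both equivalences collapse to the trivial observation that a subspace-level condition coincides with the corresponding condition on coordinate representations in any fixed basis.
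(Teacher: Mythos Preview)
The paper states this lemma without proof, treating it as a direct coordinate restatement of the function-space conditions; your approach of unfolding the basis expansions is exactly the argument the paper leaves implicit, and your handling of the (A3$_h$) case is correct and complete.

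Your treatment of (A2$_h'$) is also appropriate, but it is worth emphasising that you have correctly diagnosed two defects in the \emph{statement} rather than difficulties in the proof. First, the right-hand side of (A2$_h'$) as printed omits the broken derivative $\dx'$ acting on the $\psi_j$; without it the condition would assert $P_h \subseteq Q_h$, which bears no relation to (A2$_h$). Second, even with the derivative restored, (A2$_h'$) encodes only the inclusion $P_h \subseteq \dx' Q_h$, so the reverse inclusion $\dx' Q_h \subseteq P_h$ must be supplied from the choice of spaces (which does hold for the pair in Section~\ref{sec:fem}). You flag both points and resolve them sensibly; these are genuine imprecisions in the paper's formulation, not gaps in your argument. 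Note also that the ``(A2)--(A3)'' in the lemma header is a typo for (A2$_h$)--(A3$_h$), since the conditions (A2)--(A3) of Section~\ref{sec:spmr} involve the projection matrices $V_i$, which play no role here.
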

As a direct consequence of this characterization, Lemma~\ref{lem:stability_disc}, 
and the equivalence of the algebraic system to the Galerkin approximation, 
we obtain the following lemma.
\begin{lemma}[Exponential stability] $ $\\
Let (A2$_h'$)--(A3$_h'$) hold 
and let $(x_1(t),x_2(t),x_3(t))$ be a solution of \eqref{eq:lti1}--\eqref{eq:lti3}. 
Then 
\begin{align*}
\frac{d}{dt} E_h(t) \le C e^{-\gamma(t-s)} E_h(s), 
\end{align*}
with constants $C,\gamma>0$ that can be chosen as in Lemma~\ref{lem:stability} and \ref{lem:stability_disc}.
\end{lemma}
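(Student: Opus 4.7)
The plan is to transfer the uniform exponential stability result from the Galerkin approximation in function space (Lemma~\ref{lem:stability_disc}) to the algebraic setting via the equivalence established in Lemma~\ref{lem:algebraic}. First I would verify that the hypotheses (A2$_h'$)--(A3$_h'$) are exactly the algebraic rendering of the compatibility conditions (A2$_h$)--(A3$_h$) needed on the function space side: by the preceding characterization lemma, (A2$_h'$) says that every $p_h = \sum_i x_{1,i}\phi_i \in P_h$ can be realized as $\dx'q_h$ for some $q_h = \sum_j x_{2,j}\psi_j \in Q_h$, which means $\dx' Q_h \supseteq P_h$; the reverse inclusion is automatic from $\{\phi_i\}$ being a basis of $P_h$, so (A2$_h'$) is equivalent to (A2$_h$). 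Similarly (A3$_h'$) literally states $1^e \in Q_h$ for every $e \in \E$, which is (A3$_h$).

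Next I would use Lemma~\ref{lem:algebraic} to identify the algebraic solution $(x_1(t),x_2(t),x_3(t))$ of \eqref{eq:lti1}--\eqref{eq:lti3} with the Galerkin solution $(p_h(t),q_h(t),\lambda_h(t))$ of Problem~\ref{prob:galerkin}, where $p_h = \sum_i x_{1,i}\phi_i$ and $q_h = \sum_j x_{2,j}\psi_j$. Under this correspondence, the definition of the matrices $M_1, M_2$ as mass matrices gives directly
\begin{align*}
E_h(t) = \tfrac{1}{2}\bigl(x_1(t)^\top M_1 x_1(t) + x_2(t)^\top M_2 x_2(t)\bigr) = \tfrac{1}{2}\sum_{e\in\E}\int_e a^e |p_h^e(t)|^2 + b^e |q_h^e(t)|^2 \, dx,
\end{align*}
so the two notions of discrete energy coincide exactly.

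With this identification in hand, I would apply Lemma~\ref{lem:stability_disc} to the function-space solution: for $u\equiv 0$ it yields $E_h(t) \le C e^{-\gamma(t-s)} E_h(s)$ for all $t \ge s \ge 0$, with constants $C,\gamma>0$ that can be chosen as on the continuous level (Lemma~\ref{lem:stability}). Since the energy is the same quantity in both formulations and the hypotheses match, this inequality transfers verbatim to the algebraic setting, completing the proof.

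The main obstacle is largely bookkeeping rather than analysis: the heart of the estimate (energy estimates, graph-theoretic arguments, and a generalized Poincaré inequality) already lives in \cite{EggerKugler16b} and is invoked via Lemma~\ref{lem:stability_disc}. The step that requires care is the precise translation between the algebraic compatibility conditions (A2$_h'$)--(A3$_h'$) and their function space counterparts, because it is there that the uniformity of the constants $C,\gamma$ with respect to $h$ is inherited; without (A2$_h'$)--(A3$_h'$), the Poincaré-type argument on the discrete level would fail and the decay rate could degenerate.
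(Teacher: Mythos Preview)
Your approach is exactly what the paper does: the lemma is stated there as a direct consequence of the equivalence between the algebraic system and the Galerkin approximation (Lemma~\ref{lem:algebraic}) together with Lemma~\ref{lem:stability_disc}, and you have simply spelled out that transfer in detail, including the identification of the two energy expressions. One minor quibble: the reverse inclusion $\dx' Q_h \subseteq P_h$ does not follow just from $\{\phi_i\}$ being a basis of $P_h$; it is part of the assumed discretization structure (and holds for the mixed finite element spaces of Section~\ref{sec:fem}), but this does not affect your argument.
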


The derivation of the conditions (A2$_h'$)--(A3$_h'$) and the proof of the stability estimate can thus be deduced from the underlying Galerkin approximation and the analysis in function spaces.

\subsection{Steady states}

The assertion about steady states can finally be translated as follows.
\begin{lemma}[Steady states] 
Let (A2$_h'$)--(A3$_h'$) hold. 
Then for $u(t) \equiv const$, the solutions $(x_1(t),x_2(t),x_3(t))$ of the system \eqref{eq:lti1}--\eqref{eq:lti3}
converge to a steady state $(\bar x_1,\bar x_2,\bar x_3)$ which is the unique solution of the corresponding stationary problem. 
\end{lemma}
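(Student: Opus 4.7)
The plan is to reduce the statement to the previously established exponential stability result and to the regularity of the stationary operator, mimicking the template of Lemma~\ref{lem:equilibrium} and \ref{lem:steady_disc}.

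First I would verify existence and uniqueness of a steady state. Assumptions (A2$_h'$)--(A3$_h'$) are the algebraic reformulations of (A2$_h$)--(A3$_h$) and together with positivity of the coefficients $a^e$, $b^e$, $d^e$ imply that (A0) holds in the algebraic system. By Lemma~\ref{lem:pencil}, the matrix $A$ is therefore regular, so the stationary problem $A \bar x = B u$ with constant input $u$ admits a unique solution $\bar x = (\bar x_1, \bar x_2, \bar x_3)$.

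Next, by linearity of \eqref{eq:lti1}--\eqref{eq:lti3}, the shifted quantity $\tilde x(t) := x(t) - \bar x$ satisfies the same system with input replaced by $\tilde u \equiv 0$ and initial values $\tilde x(0) = x(0) - \bar x$, which are automatically consistent with the constraint $N \tilde x_2 = 0$. Applying the preceding exponential stability lemma to $\tilde x$ yields
\begin{align*}
\tfrac{1}{2} \bigl( \tilde x_1(t)^\top M_1 \tilde x_1(t) + \tilde x_2(t)^\top M_2 \tilde x_2(t) \bigr) \le C e^{-\gamma t}\, \tilde E_h(0),
\end{align*}
and positive definiteness of $M_1$ and $M_2$ then gives $x_i(t) \to \bar x_i$ exponentially for $i=1,2$.

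The remaining convergence $x_3(t) \to \bar x_3$ is the main obstacle, because the Lagrange multiplier is not directly controlled by the energy functional. To handle it I would rearrange the momentum equation~\eqref{eq:lti2} for $\tilde x$ as
\begin{align*}
N^\top \tilde x_3 = M_2 \dot{\tilde x}_2 - G^\top \tilde x_1 + D \tilde x_2,
\end{align*}
and invoke the injectivity of $N^\top$ which is guaranteed by (A0). It then suffices to control $\dot{\tilde x}_2$. As pointed out in Remark~\ref{rem:lti} and Section~\ref{sec:ode}, (A0) allows algebraic elimination of the Lagrange multiplier, so that $(\tilde x_1,\tilde x_2)$ satisfies a linear ODE with constant coefficient matrix. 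Its solutions are finite sums of modes of the form $p(t)\,e^{\lambda t}$, and the exponential energy decay established above forces $\mathrm{Re}(\lambda)<0$ for every mode. Consequently $\dot{\tilde x}_2(t)$ also decays exponentially, and applying a left inverse of $N^\top$ on its range yields $\tilde x_3(t) \to 0$ exponentially, which gives the desired convergence of the Lagrange multiplier to $\bar x_3$ and completes the proof.
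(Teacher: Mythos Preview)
Your argument is correct and follows the same template as the paper: reduce to the homogeneous case by linearity, apply the exponential stability result to obtain convergence of the state, and identify the limit as the unique stationary solution. The paper's own proof is a one-line invocation of the equivalence with the Galerkin approximation in function spaces (Lemma~\ref{lem:steady_disc}), whereas you carry out the argument directly on the algebraic level. In particular, you are more explicit than the paper about the convergence of the Lagrange multiplier $x_3$, which is not controlled by the energy; your spectral argument via the reduced ODE and injectivity of $N^\top$ is a clean way to close this point, which the paper (and the underlying Lemma~\ref{lem:steady_disc}) leaves implicit.
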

\begin{proof}
The result follows again by equivalence to the Galerkin approximation \eqref{eq:var1_disc}--\eqref{eq:var3_disc} 
and the corresponding result stated in Lemma~\ref{lem:steady_disc}. 
\end{proof}

\subsection{Structure preserving model reduction} \label{sec:algebraicmodred}

As a final step of our analysis, we can now provide a proof for Theorem~\ref{thm:main} by translating the results of Section~\ref{sec:strucpresmodelred} to the algebraic level: 
Let $\{\phi_i\}$ and $\{\psi_j\}$ denote bases for $P_h$ and $Q_h$,
and let $V_1 \in \RR^{k_1 \times K_1}$ and $V_2 \in \RR^{k_2 \times K_2}$ be given matrices with linearly independent columns. 
For $k=1,\ldots,K_1$ and $l=1,\ldots,K_2$, we define
\begin{align*}
\Phi_k = \sum\nolimits_{i=1}^{k_1} V_{1,ki} \phi_i, 
\qquad \text{and} \qquad 
\Psi_l = \sum\nolimits_{j=1}^{k_2} V_{2,lj} \psi_j,
\end{align*}
which serve as basis functions for low dimensional approximation spaces $P_H$ and $Q_H$.
As a direct consequence of this construction and the previous considerations, we obtain 
\begin{lemma}[Reduced algebraic system]
Let us define
\begin{align*}
P_H = \text{span}\{\Phi_k : k=1,\ldots,K_1\}
\qquad \text{and} \qquad 
Q_H = \text{span}\{\Psi_l : l=1,\ldots,K_2\}.
\end{align*} 
Then $P_H \subset P_h$ and $Q_H \subset Q_h$, and the corresponding discrete variational problem is equivalent to the reduced order system
\eqref{eq:red1}--\eqref{eq:red3} with matrices as defined in Section~\ref{sec:spmr}.
\end{lemma}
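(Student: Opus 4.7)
The plan is to reduce this statement to two observations: (i) the inclusions $P_H\subset P_h$ and $Q_H\subset Q_h$ are immediate from the construction, and (ii) the discrete variational problem posed on $P_H\times Q_H\times\Lambda_H$ is algebraically the same system as \eqref{eq:red1}--\eqref{eq:red3}, obtained by expressing each bilinear form in the bases $\{\Phi_k\}$ and $\{\Psi_l\}$ and recognizing the resulting matrices as $V_i^\top(\cdot)V_i$.

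First I would note that, by definition, each $\Phi_k=\sum_i V_{1,ki}\phi_i$ is a finite linear combination of elements of $P_h$, so $P_H\subset P_h$; and analogously $Q_H\subset Q_h$. Since the columns of $V_1$ and $V_2$ are linearly independent by assumption, $\{\Phi_k\}$ and $\{\Psi_l\}$ are linearly independent bases of $P_H$ and $Q_H$, respectively. For the multiplier space I would keep $\Lambda_H=\Lambda_h=\RR^{\Vi}$, which corresponds to $V_3=I_3$ in \eqref{eq:V} and to the fact that the network topology is unreduced.

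Next I would invoke Lemma~\ref{lem:algebraic} applied to the Galerkin approximation of Problem~\ref{prob:galerkin} posed on $P_H$, $Q_H$, $\Lambda_H$ with the bases $\{\Phi_k\}$, $\{\Psi_l\}$, and the standard basis of $\RR^{\Vi}$. This yields an algebraic system of exactly the form \eqref{eq:red1}--\eqref{eq:red3}, with reduced matrices $\widetilde M_1$, $\widetilde M_2$, $\widetilde G$, $\widetilde D$, $\widetilde N$, $\widetilde B_2$ whose entries are scalar products of the $\Phi_k$, $\Psi_l$. The remaining task is to verify that these reduced matrices coincide with $\widehat M_1$, $\widehat M_2$, $\widehat G$, $\widehat D$, $\widehat N$, $\widehat B_2$ defined in Section~\ref{sec:spmr}. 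This is a direct calculation: for instance,
\begin{align*}
\widetilde M_1(k,k') = (a\Phi_{k'},\Phi_k)_\E = \sum_{i,i'} V_{1,ki}V_{1,k'i'}(a\phi_{i'},\phi_i)_\E = (V_1^\top M_1 V_1)(k,k') = \widehat M_1(k,k'),
\end{align*}
and similarly $\widetilde M_2 = V_2^\top M_2 V_2=\widehat M_2$, $\widetilde D=V_2^\top D V_2=\widehat D$, $\widetilde G=V_2^\top G V_1=\widehat G$ (from $(\dx'\Psi_l,\Phi_k)_\E=\sum_{i,j}V_{1,ki}V_{2,lj}(\dx'\psi_j,\phi_i)_\E$), $\widetilde N=NV_2=\widehat N$ (from $[n\Psi_l](v_{i_0})=\sum_j V_{2,lj}[n\psi_j](v_{i_0})$), and $\widetilde B_2=V_2^\top B_2=\widehat B_2$. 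Identifying the vector of coefficients of the reduced solution with $(z_1,z_2,z_3)$ in \eqref{eq:red1}--\eqref{eq:red3} completes the equivalence.

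There is no real obstacle here: the lemma is essentially a bookkeeping statement saying that the change-of-basis matrices $V_1$, $V_2$ transform the Galerkin matrices of the fine discretization into those of the coarse one. The only thing to be mildly careful about is keeping the indexing of inner and boundary vertices consistent with Lemma~\ref{lem:algebraic}, so that $N$ and $B_2$ are unambiguously defined; with $V_3=I_3$ this is automatic.
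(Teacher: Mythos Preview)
Your proposal is correct and is exactly the argument the paper has in mind: the lemma is stated there as ``a direct consequence of this construction and the previous considerations'' with no written proof, and what you do is simply spell out that consequence by invoking Lemma~\ref{lem:algebraic} for the coarse bases $\{\Phi_k\},\{\Psi_l\}$ and checking that the resulting Gram-type matrices equal the $V_i^\top(\cdot)V_j$ expressions of Section~\ref{sec:spmr}. One cosmetic point: your displayed sum for $\widetilde G$ actually evaluates to $V_1^\top G V_2$ (a $K_1\times K_2$ matrix, as required by \eqref{eq:red1}), not $V_2^\top G V_1$; the latter is a transposition slip inherited from the paper's own formula in Section~\ref{sec:spmr}, and your calculation is the correct one.
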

The property (P1) for the reduced system \eqref{eq:red1}--\eqref{eq:red3} again follows directly from the special algebraic form of the reduced problem. In order to guarantee (P2)--(P4), we require additional compatibility conditions. The following characterization clarifies the picture. 
\begin{lemma}[Algebraic compatibility conditions] $ $\\
Let (A1$_h'$)--(A3$_h'$) hold and assume that the algebraic compatibility conditions 
(A1)--(A3) are valid. 
Then $P_H \subset P_h$ and $Q_H \subset Q_h$ satisfy the compatibility conditions (A1$_H$)--(A3$_H$). 
\end{lemma}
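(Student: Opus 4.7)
The plan is to translate each algebraic containment in (A1)--(A3) into the corresponding function-space statement on $P_H$ and $Q_H$ using the coordinate isomorphisms $x_1 \mapsto \sum x_{1,i} \phi_i$ and $x_2 \mapsto \sum x_{2,j} \psi_j$. By construction, $P_H$ is the image of $\R(V_1)$ under the first map and $Q_H$ the image of $\R(V_2)$ under the second, so showing that a function lies in $P_H$ or $Q_H$ reduces to exhibiting its coordinate vector in $\R(V_1)$ or $\R(V_2)$.

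For (A1$_H$) I would use (A1$_h'$) to identify $o_1$ as the coordinate vector of the constant function $1 \in P_h$ and then invoke (A1) to conclude $o_1 \in \R(V_1)$, so that $1 \in P_H$. For (A3$_H$), the small but essential observation is that $1^e$ is piecewise constant and hence $\dx' 1^e = 0$; translating this through the definition $G(i,j) = (\dx' \psi_j, \phi_i)_\E$ gives $G o_2^e = 0$, so $o_2^e \in \N(G)$, and then (A3) delivers $o_2^e \in \R(V_2)$, whence $1^e \in Q_H$.

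The main work concerns (A2$_H$), where both inclusions of $\dx' Q_H = P_H$ must be verified. For $\dx' Q_H \subset P_H$, I would take $q = \sum_j (V_2 z_2)_j \psi_j \in Q_H$, use the full-order condition (A2$_h$) to guarantee $\dx' q \in P_h$ so that $\dx' q = \sum_i y_{1,i} \phi_i$ for a unique $y_1$, and observe that testing this identity against $\phi_i$ yields $M_1 y_1 = G V_2 z_2$. Assumption (A2) then produces $z_1$ with $M_1 V_1 z_1 = G V_2 z_2$, and invertibility of $M_1$ forces $y_1 = V_1 z_1 \in \R(V_1)$. The reverse inclusion runs backwards through the same chain: given $p = \sum_i (V_1 z_1)_i \phi_i \in P_H$, (A2) supplies $z_2$ with $G V_2 z_2 = M_1 V_1 z_1$, and the corresponding $q = \sum_j (V_2 z_2)_j \psi_j \in Q_H$ has $\dx' q = p$.

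I expect the forward inclusion in (A2$_H$) to be the main obstacle, since it weaves together three ingredients from different levels of the hierarchy: the function-space inclusion $\dx' Q_h \subset P_h$ coming from (A2$_h$), the algebraic identification of the weak derivative in coordinates with the matrix $G$ taken relative to the mass matrix $M_1$, and the invertibility of $M_1$, which converts the identity $M_1 y_1 = M_1 V_1 z_1$ into $y_1 = V_1 z_1$. Once these are assembled in the correct order, the remaining verifications are routine bookkeeping on coordinate vectors.
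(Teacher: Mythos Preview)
Your handling of (A1$_H$) and (A3$_H$) is correct; in particular, the observation that $\dx' 1^e = 0$ forces $o_2^e \in \N(G)$, so that the first part of (A3) delivers $o_2^e \in \R(V_2)$, is exactly the right bridge between the function-space and algebraic pictures.

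There is, however, a genuine gap in the (A2$_H$) argument. The step ``testing this identity against $\phi_i$ yields $M_1 y_1 = G V_2 z_2$'' is not correct as written: since $G(i,j)=(\dx'\psi_j,\phi_i)_\E$ carries no weight while $M_1(i,j)=(a\,\phi_j,\phi_i)_\E$ does, testing $\dx' q=\sum_i y_{1,i}\phi_i$ against $\phi_k$ actually gives
\[
(G V_2 z_2)_k \;=\; (\dx' q,\phi_k)_\E \;=\; \sum_i y_{1,i}(\phi_i,\phi_k)_\E \;=\; (\widetilde M_1\, y_1)_k,
\]
with the \emph{unweighted} Gram matrix $\widetilde M_1(k,i)=(\phi_i,\phi_k)_\E$. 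Combining this with (A2) yields only $\widetilde M_1 y_1 = M_1 V_1 z_1$, which does not force $y_1\in\R(V_1)$ unless $\widetilde M_1^{-1}M_1$ happens to preserve $\R(V_1)$. When the coefficient $a$ is not globally constant this can genuinely fail: in coordinates, (A2$_H$) is the condition $\R(\widetilde M_1 V_1)=\R(G V_2)$, and one can build small examples (two pipes with different values of $a^e$, a one-dimensional $\R(V_1)$) in which (A2) and (A3) both hold while $\dx' Q_H$ and $P_H$ are distinct one-dimensional subspaces of $P_h$. The same mismatch obstructs the reverse inclusion.

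The paper itself gives no detailed proof of this lemma, treating the translation as routine. Your argument is precisely the natural one and goes through verbatim when $a\equiv 1$ (so that $M_1=\widetilde M_1$); the discrepancy between the weighted and unweighted mass matrices for variable $a$ is a real subtlety that the straightforward translation does not resolve.
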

As a direct consequence of the previous considerations, we now obtain the following 
result.
\begin{lemma}[Structure preserving model reduction] $ $\\
Let (A1$_h'$)--(A3$_h'$) hold for the system \eqref{eq:lti1}--\eqref{eq:lti3} 
and assume that the algebraic conditions (A1)--(A3) are valid.
Then the reduced system \eqref{eq:red1}--\eqref{eq:red3} satisfies (P1)--(P4). 
\end{lemma}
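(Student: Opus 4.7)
The plan is to exploit the three-level correspondence depicted in Figure~\ref{fig:sketch}: the algebraic reduced system \eqref{eq:red1}--\eqref{eq:red3} should be identified with a Galerkin approximation of the variational problem \eqref{eq:var1}--\eqref{eq:var3} on the coarse function spaces $P_H \subset P_h \subset L^2(\E)$ and $Q_H \subset Q_h \subset H^1(\E)$ constructed via the projection matrices $V_1,V_2$. Once this identification is made, the properties (P1)--(P4) for the reduced model follow immediately from the already-established Galerkin theory in Section~\ref{sec:galerkin}, and there is essentially nothing new to compute.

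First, I would invoke the preceding \emph{Reduced algebraic system} lemma to rewrite the reduced system \eqref{eq:red1}--\eqref{eq:red3} as the discrete variational problem associated with the pair $(P_H,Q_H)$, where $P_H = \mathrm{span}\{\Phi_k\}$ and $Q_H = \mathrm{span}\{\Psi_l\}$ are the finite-dimensional spaces induced by the columns of $V_1,V_2$ through the bases $\{\phi_i\},\{\psi_j\}$ of $P_h,Q_h$. This step is purely a re-identification of variables: $z_1,z_2,z_3$ become the coordinate vectors of the coarse-grid solution with respect to the bases $\{\Phi_k\},\{\Psi_l\},\{e_v\}_{v\in\Vi}$, and the matrices $\widehat M_i,\widehat G,\widehat D,\widehat N,\widehat B_2$ are precisely the Galerkin matrices associated with these new bases.

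Next, I would apply the \emph{Algebraic compatibility conditions} lemma to deduce from (A1$_h'$)--(A3$_h'$) together with (A1)--(A3) that the coarse spaces satisfy (A1$_H$)--(A3$_H$), i.e.\ $1 \in P_H$, $\dx' Q_H = P_H$, and $1^e \in Q_H$ for every $e \in \E$. Concretely, (A1) together with (A1$_h'$) provides a vector in $V_1$-coordinates whose lift is the constant one function in $P_H$; (A2) is exactly the statement that every element of $P_H$ is the broken derivative of some element of $Q_H$ (after unpacking via the definitions of $M_1,G$ in Lemma~\ref{lem:algebraic}); and (A3), combined with (A3$_h'$), ensures that each pipewise-constant indicator function $1^e$ lies in $Q_H$ because $1^e \in \N(G)$ on the algebraic side.

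Finally, the \emph{Structure preserving model reduction} lemma of Section~\ref{sec:strucpresmodelred} applies verbatim to the pair $(P_H,Q_H)$ and yields (P1)--(P4): mass conservation (P1) and the energy balance (P2) follow from Lemma~\ref{lem:mass_disc} and Lemma~\ref{lem:energy_disc} under (A1$_H$), while the uniform exponential stability (P3) and the existence of unique steady states (P4) follow from Lemma~\ref{lem:stability_disc} and Lemma~\ref{lem:steady_disc} under (A2$_H$)--(A3$_H$), with constants independent of both the intermediate discretization parameter $h$ and the coarse dimension, as they are inherited from the infinite-dimensional problem. The main obstacle, and really the only non-mechanical point, is to convince oneself that the matrix-level identities in (A1)--(A3) are genuinely equivalent to the function-space inclusions (A1$_H$)--(A3$_H$) under the assumed bases for $P_h,Q_h$; this is where the careful bookkeeping of the $M_2$-weighted pseudo-inverse in Lemma~\ref{lem:pseudo} and the characterization $\R(V_2) = \N(G) + \cdots$ from Lemma~\ref{lem:subspace2} must be checked to line up with $\dx' Q_H = P_H$. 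All remaining statements then transfer by linearity and the equivalence of the algebraic and variational formulations.
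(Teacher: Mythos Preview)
Your proposal is correct and follows essentially the same route as the paper: identify the reduced algebraic system with the Galerkin approximation on the coarse spaces $P_H,Q_H$ (the \emph{Reduced algebraic system} lemma), transfer the compatibility conditions via the \emph{Algebraic compatibility conditions} lemma to obtain (A1$_H$)--(A3$_H$), and then invoke the function-space lemma of Section~\ref{sec:strucpresmodelred}. The only minor over-elaboration is your reference to Lemma~\ref{lem:pseudo} and Lemma~\ref{lem:subspace2}: those concern the \emph{construction} of particular $V_1,V_2$ satisfying (A1)--(A3), whereas the present lemma assumes (A1)--(A3) as hypotheses and does not depend on how they were achieved.
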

\noindent
This lemma yields a correct statement of Theorem~\ref{thm:main} and 
completes the proof of our assertions.

\end{document}